\documentclass[12pt, leqno]{amsart}

\usepackage[OT2,T1]{fontenc}
\usepackage{indentfirst}
\usepackage{amstext}
\usepackage{amsthm}
\usepackage{amsopn}
\usepackage{amsfonts}
\usepackage{amsmath}
\usepackage{amssymb}
\usepackage{latexsym}
\usepackage[english]{babel}
\usepackage{amscd}

\usepackage{amsmath}
\usepackage[all,cmtip]{xy}
\usepackage{leftidx}
\usepackage{graphicx}
\usepackage{xcolor}
\usepackage[shortlabels]{enumitem}
\usepackage{etoolbox}
\usepackage{hyperref}
\patchcmd{\section}{\normalfont\scshape\centering}{\normalfont\bfseries}{}{}
\patchcmd{\subsection}{-.5em}{.5em}{}{}

\usepackage{xpatch}

\xpatchcmd{\proof}
  {\itshape}
  {\bfseries}
  {}
  {}

\newtheorem{theo}{{Theorem}}[section]
\newtheorem{coro}[theo]{{Corollary}}
\newtheorem{lemma}[theo]{{Lemma}}
\newtheorem{prop}[theo]{Proposition}

\theoremstyle{definition}
\newtheorem{remark}[theo]{\textbf{Remark}}
\newtheorem{defn}[theo]{Definition}
\newtheorem{example}[theo]{Example}
\numberwithin{equation}{section}

\newtheorem{notation}[theo]{Notation}

\newcommand{\ZZ}{\mathbf{Z}}

\newcommand{\id}{\mathrm{id}}

\DeclareFontEncoding{OT2}{}{} 

\begin{document}
\tolerance 400 \pretolerance 200 \selectlanguage{english}

\title[Dynamics of elliptic hyperk\"ahler manifolds]{Dynamical degrees of automorphisms of non-projective hyperk\"ahler manifolds}
\author{Eva  Bayer-Fluckiger, Stevell Muller and Matthias Schütt}

\AtEndDocument{\bigskip{\footnotesize%
  \textsc{Eva Bayer-Fluckiger\newline EPFL-FSB-MATH, Station 8, 1015 Lausanne, Switzerland}\newline  
  \textit{E-mail address}: \texttt{eva.bayer@epfl.ch} \par
  \addvspace{\medskipamount}
 \textsc{Stevell Muller\newline Institut f\"ur Algebraische Geometrie, Leibniz Universit\"at
  Hannover, Welfengarten 1, 30167 Hannover, Germany} \newline
  \textit{E-mail address}: \texttt{muller@math.uni-hannover.de} \par
  \addvspace{\medskipamount}
  \textsc{Matthias Schütt\newline Institut f\"ur Algebraische Geometrie, Leibniz Universit\"at
  Hannover, Welfengarten 1, 30167 Hannover, Germany\newline
  Riemann Center for Geometry and Physics, Leibniz Universit\"at
  Hannover, Appelstrasse 2, 30167 Hannover, Germany}\newline
  \textit{E-mail address}: \texttt{schuett@math.uni-hannover.de}
}}

\date{\today}

\begin{abstract} 
We determine the dynamical degrees of bimeromorphic automorphisms of hyperk\"ahler manifolds of all known deformation types with irreducible characteristic polynomial. 
\end{abstract}
\maketitle

\small{} \normalsize

\medskip

\selectlanguage{english}
\section{Introduction}

A compact K\"ahler manifold $X$ is called \emph{hyperk\"ahler} if it is simply-connected and if $H^0(X, \Omega^2_X)$ is 1-dimensional spanned by a nowhere degenerate holomorphic 2-form. By definition, hyperk\"ahler manifolds have even complex dimension; 2-dimensional hyperk\"ahler manifolds are K3 surfaces.

To this date, every known example of a hyperk\"ahler manifold is deformation equivalent to a (crepant) resolution of the Albanese fiber of a moduli space of semistable sheaves on a projective $K$-trivial surface (i.e., K3 or abelian surface). We use the following common notation for the known deformation types of hyperk\"ahler manifolds of dimension greater than 2:

\begin{enumerate}
    \item $\mathrm{K3}^{[n]}$ ($n\geq 2$) for the deformation classes of the Hilbert schemes of $n$ points on K3 surfaces \cite[Th\'eor\`eme 3]{bea83},
    \item $\mathrm{Kum}_n$ ($n\geq 2$) for the deformation classes of the Albanese fibers of the Hilbert schemes of $n+1$ points on abelian surfaces \cite[Th\'eor\`eme~4]{bea83},
    \item OG10 for the deformation class of O'Grady's 10-dimensional examples \cite{og99},
    \item OG6 for the deformation class of O'Grady's 6-dimensional examples \cite{og03}.
\end{enumerate}

The second integral cohomology $H^2(X, \mathbf{Z})$ of a hyperk\"ahler manifold $X$ comes equipped with a nondegenerate integral quadratic form $q_X$, known as \emph{Beauville--Bogomolov--Fujiki} (BBF) quadratic form \cite[Proposition 5]{bea83}. The isometry class of the lattice $(H^2(X,\mathbf{Z}), q_X)$ is invariant in a fixed deformation type. It has been explicitly determined for the known hyperk\"ahler manifolds of dimension greater than 2 by \cite[Proposition 6 and 8]{bea83}, \cite[Corollary~3.5.12]{rap07} and \cite[Theorem~4.3]{rap08} (cf.\ Table~\ref{tab:info hk} for more details).

\begin{table}
    \caption{Topological invariants of the known hyperk\"ahler manifolds}\label{tab:info hk}
    
\centering\setlength{\tabcolsep}{6pt}
\renewcommand\arraystretch{1.5}
\resizebox{!}{!}{
\begin{tabular}{ccccc}
        deformation type of $X$&$\operatorname{dim}X$&$b_2(X)$&$\operatorname{sign}q_X$&$(H^2(X, \mathbf{Z}), q_X)$\\

         \hline
        
        $\mathrm{Kum}_n$&$2n$&7&$(3,4)$&$U^{\oplus3}\oplus A_1(n+1)$\\
         \hline
         
         OG6&6&8&$(3,5)$&$U^{\oplus3}\oplus A_1^{\oplus2}$\\
        \hline
        K3&2&22&$(3,19)$&$U^{\oplus3}\oplus E_8^{\oplus2}$\\
        \hline
        $\mathrm{K3}^{[n]}$&$2n$&23&$(3, 20)$&$U^{\oplus3}\oplus E_8^{\oplus2}\oplus A_1(n-1)$\\
         \hline
        
        OG10&10&24&$(3,21)$&$U^{\oplus3}\oplus E_8^{\oplus2}\oplus A_2$\\
        \hline
         
    \end{tabular}}
    
\end{table}
\medskip

Let $X$ be a hyperk\"ahler manifold and let $\operatorname{NS}(X) = H^2(X,\mathbf{Z})\cap H^{1,1}(X,\mathbf{R})$ be its N\'eron--Severi lattice. It is isomorphic to $\operatorname{Pic}(X)$ via the first Chern map $c_1(-)$, and so it is of rank $\rho(X)$, the Picard rank of $X$. We distinguish three cases:
\begin{enumerate}
    \item either $\operatorname{NS}(X)$ is degenerate and we say that $X$ is \emph{parabolic},
    \item or $\operatorname{NS}(X)$ is nondegenerate of signature $(1, \rho(X)-1)$ and we say that $X$ is \emph{hyperbolic},
    \item or $\operatorname{NS}(X)$ is negative definite and we say that $X$ is \emph{elliptic}.
\end{enumerate}
Note that $X$ is projective if and only if it is hyperbolic \cite[Theorem 3.11]{Huy}. 
In this work, we focus on the case where $X$ is elliptic.

\medskip

Given a hyperk\"ahler manifold $X$, we let $\operatorname{Bir}(X)$ denote the group of bimeromorphisms of $X$, i.e. meromorphic maps $X\dashrightarrow X$ admitting a meromorphic inverse. We let moreover $\operatorname{Aut}(X)\subset\operatorname{Bir}(X)$ be the subgroup of biholomorphisms of $X$ which are by definition the biregular bimeromorphisms of $X$. 

There is a natural orthogonal representation
\[
\operatorname{Bir}(X)\to O(H^2(X, \mathbf{Z}), q_X),\;\;\; f\mapsto f^\ast
\]
which has finite kernel (cf.\ Theorem \ref{thm:Hodge-Torelli}). 
Given a map $f\in \operatorname{Bir}(X)$, we define the \emph{dynamical degree} $\lambda(f)$ of $f$ to be\
\[
\lambda(f) := \max\{|\lambda|\mid \lambda\in\mathbf{C}\text{ is an eigenvalue of }f^\ast_{\mathbf{C}}\colon H^2(X,\mathbf{C})\to H^2(X,\mathbf{C})\}.
\]
When $X$ is elliptic, it is known that the dynamical degree $\lambda(f)$ is either $1$ or a Salem number \cite[Theorem 3.4]{oguiso08}, the latter being defined as follows:

\begin{defn} A {\it Salem polynomial} is a monic, irreducible polynomial $S \in \mathbf{Z}[x]$ of degree greater than 2 such that 
$S(x) = x^{{\rm deg}(S)} S(x^{-1})$ holds and such that $S$ has exactly two roots off the unit circle, both positive
real numbers. The unique real root greater than $1$ of a Salem polynomial is called a {\it Salem number}.
\end{defn}

There are ample results on the dynamics on
K3 surfaces, starting from  \cite{GM} and \cite{Mc1} through \cite{Mc2}, \cite{Mc3} and  \cite{BT}, \cite{B 25}, \cite{YT},
from general to concrete, but to this day only a few for hyperk\"ahler manifolds (like the aforementioned general results in \cite{oguiso08} or the recent work \cite{doo26}).
The main incentive to push for concrete results on hyperk\"ahler manifolds, and specifically on elliptic ones, 
came from \cite{BGSb} and its results on the Hodge endomorphism algebras of elliptic hyperk\"ahler manifolds.

\medskip

With this terminology, our two main results are  the following.

\begin{theo} 
\label{thm1}
Let $S$ be a Salem polynomial of degree $24$ and let $\lambda$ be the corresponding Salem number. There exists an elliptic hyperk\"ahler manifold of {\rm OG10}-type having a biholomorphism 
of dynamical degree $\lambda$ if and only if $|3S(1)S(-1)|$ is a square, and either $|S(1)|$ is a square, or $|S(-1)|$ is a square.

\end{theo}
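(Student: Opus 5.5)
The plan is to turn the geometric question into a lattice-theoretic one. We use the Hodge--Torelli theorem (Theorem~\ref{thm:Hodge-Torelli}) together with Markman's description of the monodromy and of the Kähler cone for OG10-type. On an elliptic hyperkähler manifold there are no wall divisors with positive square in $\operatorname{NS}(X)$. Moreover, a Salem polynomial of degree $24=b_2$ forces $\operatorname{NS}(X)=0$: the characteristic polynomial is irreducible, so the transcendental lattice is everything. Hence every Hodge isometry preserving the positive cone and lying in the monodromy group $\operatorname{Mon}^2$ is induced by a biholomorphism; the Kähler cone is the whole positive cone when $\operatorname{NS}=0$. For OG10-type, $\operatorname{Mon}^2=O^+(\Lambda)$, where $\Lambda=U^{\oplus3}\oplus E_8^{\oplus2}\oplus A_2$; here $O^+(\Lambda)$ is the group of orientation-preserving isometries of $\Lambda$, and the discriminant action is unrestricted. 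Surjectivity of the period map then reduces the theorem to the following claim: there is an isometry $t\in O^+(\Lambda)$ with characteristic polynomial $S$ if and only if the stated square conditions hold. Given such $t$, its eigenspace for a root of $S$ on the unit circle defines a period $\sigma$ with $\sigma^2=0$ and $\sigma\bar\sigma>0$, after choosing the correct eigenvalue pair. The Hodge structure so defined has $\operatorname{NS}=0$ by irreducibility, so $X$ is elliptic. The orientation condition on the positive 3-space can be arranged by replacing $t$ with a suitable choice, or it is automatic since $t$ acts on the positive definite 3-space through a rotation block and an eigenvalue $\lambda>0$. This step needs a short check.

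For the lattice realization problem I would invoke the local-global criterion of Bayer-Fluckiger for even lattices with a semisimple isometry of irreducible characteristic polynomial. Write $S(x)=x^{12}P(x+x^{-1})$, $E=\mathbf{Q}[x]/(S)$ and $K=\mathbf{Q}[x+x^{-1}]$. An isometry with characteristic polynomial $S$ on some lattice of signature $(3,21)$ and determinant $3$ corresponds to a hermitian structure on a fractional ideal of $E$. The determinant of the quadratic space is $S(1)S(-1)$ up to squares, since $\det=(-1)^{12}S(1)S(-1)$. Over $\mathbf{Q}$ this determinant must equal the determinant $-3$ of $\Lambda$ modulo squares, with sign $(-1)^{21}$. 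This gives exactly the condition that $|3S(1)S(-1)|$ is a square.

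Next come the local conditions at each prime $p$, which concern existence of a $t$-stable lattice in the rational space that is locally isometric to $\Lambda_p$. For $p\nmid 3S(1)S(-1)$ this is standard. At primes dividing $S(1)S(-1)$ but not $3$, the local lattice $\Lambda_p$ is unimodular. Unimodularity forces the $x\pm1$ parts of the "resultant" to behave well. In the standard analysis (as in the K3 case, Gross--McMullen), $S(1)S(-1)$ involves a square factor up to the prime $3$, and the local obstruction at $p=3$ is where the condition on $|S(1)|$ or $|S(-1)|$ being a square enters. Indeed, $\Lambda_3$ has discriminant group $\mathbf{Z}/3$, and the isometry $t$ acts on it by $\pm1$. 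A $t$-stable lattice with 3-adic discriminant $\mathbf{Z}/3$ must have $t\equiv\pm1$ there. That forces the $3$-adic valuation of $S(1)$ (for $+1$) or of $S(-1)$ (for $-1$) to be odd, with the other even. Combined with the global determinant condition, this yields: one of $|S(\pm1)|$ is a square and the other is $3$ times a square. Conversely, given these conditions, I would construct local lattices at each prime and check the Hasse--Witt invariant condition. The remaining step is to apply the Hasse principle with the local--global obstruction group of Bayer-Fluckiger. That group vanishes here because $S$ is irreducible, so the obstruction group is trivial.

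The main obstacle is the analysis at $p=3$ and $p=2$: showing that the local conditions exactly produce "$|S(1)|$ or $|S(-1)|$ is a square", and that the signature and Hasse invariants can be matched simultaneously. I would first handle $p=3$ by decomposing $\Lambda_3\otimes\mathbf{Q}_3$ into the $t$-primary pieces over the factors of $S$ modulo $3$. I would then compute the discriminant forms of the pieces attached to the factors $(x-1)$ and $(x+1)$ of $S \bmod 3$, using the fact that the discriminant of the $\mathbf{Z}_3[t]$-module is governed by $v_3(S(\pm1))$.
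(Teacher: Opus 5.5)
Your reduction to the lattice problem is the same as the paper's: surjectivity of the period map, $\operatorname{NS}(X)=0$ by irreducibility, K\"ahler cone $=$ positive cone, and $\operatorname{Mon}^2=O^+(\Lambda)$. Membership of $t$ in $O^+$ is indeed automatic; the paper reads it off from the real spinor norm $S(-1)>0$, which is equivalent to your eigenvalue check.

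Necessity is fine in outline. Your $3$-adic claim, that $t\equiv\pm1$ on $A(\Lambda)\cong\mathbf{Z}/3$ fixes the parities of $v_3(S(\pm1))$, is stated without proof, but you do not need it. Two facts already force one of $|S(\pm1)|$ to be a square:
\begin{enumerate}
\item the determinant formula gives $-3S(1)S(-1)\in(\mathbf{Q}^\times)^2$;
\item for $p\neq 3$ the lattice $\Lambda\otimes\mathbf{Z}_p$ is even unimodular, so the alternating pairing $\langle x,(t\mp1)^{-1}y\rangle$ on $\Lambda_p/(t\mp1)\Lambda_p$ shows that $v_p(S(\pm1))$ is even.
\end{enumerate}
The paper argues differently: it glues $\Lambda$ with $A_2(-1)$, carrying an isometry of order $3$ or $6$ according to the action on $A(\Lambda)$, and applies Gross--McMullen to $S(x)(x^2\pm x+1)$.

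The genuine gap is sufficiency. Its whole content is the existence at $p=3$ of a $t$-stable lattice isometric to $\Lambda\otimes\mathbf{Z}_3$ with characteristic polynomial $S$. You list this as ``the main obstacle'' and do not resolve it, and the plan you sketch misidentifies the problem. The discriminant forms of the pieces attached to $x\mp1$ cannot be computed from $S$: they depend on the local hermitian data. That data must be \emph{chosen} so that the total residue in $W_\Gamma(\mathbf{F}_3)$ is a one-dimensional form with the same determinant as the discriminant form of $\Lambda\otimes\mathbf{Z}_3$. With the wrong choice you land on the other rank-$24$ $\mathbf{Z}_3$-lattice with determinant class $-3$, whose discriminant form has the opposite determinant.

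The paper makes this choice place by place:
\begin{enumerate}
\item split places contribute zero residue, and at unramified places one chooses $\lambda_v$ with zero residue;
\item the parity $\sum_{v\in\Omega_\pm}[\kappa_v:\mathbf{F}_3]\equiv v_3(S(\pm1))\pmod 2$ gives a ramified place $v_0$ of odd residue degree on the side where the valuation is odd, where $\partial\circ t_1$ is injective onto the odd-dimensional classes and so hits $\langle 1\rangle$;
\item on the remaining ramified places the total residue degree is even on each side, so the residues can be made to cancel.
\end{enumerate}
Theorem B of Bayer-Fluckiger--Taelman then yields a stable almost unimodular lattice. A maximal one is isometric to $\Lambda\otimes\mathbf{Z}_3$ by the determinant formula and the classification of $3$-adic lattices.

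By contrast, $p=2$ is routine. The valuations $v_2(S(\pm1))$ are even and $S(1)S(-1)\equiv-3$ modulo squares, so there is an even unimodular $\mathbf{Z}_2$-lattice with such an isometry, and these lattices are determined by rank and determinant.

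Finally, the local--global principle only produces a lattice in the genus of $\Lambda$. You must add that $\Lambda$ is unique in its genus (Nikulin: indefinite with cyclic discriminant group), which your proposal omits.
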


\smallskip

\begin{theo} 
\label{thm2}
Let $S$ be a Salem polynomial of degree $8$, and let $\lambda$ be the corresponding Salem number. There exists an elliptic hyperk\"ahler manifold of {\rm OG6}-type having a biholomorphism 
of dynamical degree $\lambda$ if and only if $|S(1)|$ and $|S(-1)|$ are both squares, or both twice a square.
\end{theo}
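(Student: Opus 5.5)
We reduce the statement to a lattice-theoretic realization problem. By the Hodge--Torelli theorem for OG6-type manifolds (Theorem~\ref{thm:Hodge-Torelli}, with the known description of the monodromy group, which for OG6 is all of $O(\Lambda)$ with $\Lambda=U^{\oplus3}\oplus A_1^{\oplus2}$), an elliptic OG6-type manifold with a biholomorphism of dynamical degree $\lambda$ exists if and only if there is an isometry $t\in O(\Lambda)$ with characteristic polynomial $S$ which preserves a Hodge structure and a K\"ahler chamber. Since $b_2=8=\deg S$ and $S$ is irreducible, the invariant Hodge structure is forced. The period line is an eigenline of $t_{\mathbf{C}}$ for an eigenvalue $\delta$ on the unit circle, so $\omega\bar\omega>0$. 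Irreducibility of $S$ gives $\operatorname{NS}(X)=0$, so $X$ is elliptic, and the K\"ahler cone is the full positive cone component. Then $t$ (or $-t$, to fix the orientation of the positive 3-space, which does not change $\lambda$) preserves it, and is induced by an automorphism. For the eigenvalue choice we need that $S$ has unit-circle roots whose conjugates make the $3$-dimensional positive part work. This is automatic from the Salem signature: the two real roots give a hyperbolic plane, and each pair of unimodular conjugates gives a definite plane, exactly one of which is positive. So the problem becomes: \emph{$\Lambda$ admits an isometry with characteristic polynomial $S$ if and only if $|S(1)|,|S(-1)|$ are both squares or both twice squares.}

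For this I would use the Bayer-Fluckiger theory of isometries of lattices with given irreducible characteristic polynomial. Lattices with such an isometry are ideal lattices $(I,b)$ over $E=\mathbf{Q}[x]/(S)$, with $b(x,y)=\operatorname{Tr}_{E/\mathbf{Q}}(\alpha x\bar y)$. The discriminant of such a lattice is $\pm N(I\bar I\alpha)\cdot S(1)S(-1)$ up to squares, via the discriminant of $E$ over $E_0$ related to $S(1)S(-1)$. The necessity direction is then a determinant computation. $\det\Lambda=4$, so $|S(1)S(-1)|$ must be a square times a norm from $E_0$. The finer conditions come from the discriminant form $A_1^{\oplus2}$ on $(\mathbf{Z}/2)^2$, with $q=(-1/2,-1/2)$. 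Here $t$ acts on $\Lambda^\vee/\Lambda$, and, since $t-1$ and $t+1$ have determinants $S(1)$ and $S(-1)$, one analyzes the induced action. Either it is trivial, or it swaps the two generators; $-1$ acts trivially mod 2. Using the local $2$-adic condition together with the parity of the valuations $v_2(S(1))$ and $v_2(S(-1))$, one should get that these valuations have equal parity. Away from $2$, the classification forces $|S(\pm1)|$ to have even valuation at odd primes: for $\Lambda_p$ unimodular, Bayer-Fluckiger's criterion says $S(1)S(-1)(-1)^{n}$ is a square. Combining these gives the ``both squares or both twice squares'' dichotomy. Here $n=4$ and $(-1)^4=1$, and the sign is fixed by the Salem property, $S(1)<0<S(-1)$ up to sign.

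For sufficiency I would apply the local--global principle for existence of an even lattice of given signature and discriminant form carrying an isometry with characteristic polynomial $S$. This needs three inputs: the signature condition, which holds because $S$ is Salem; local existence at each prime $p$, which follows from the square conditions; and the vanishing of the obstruction group. That group is trivial or controlled because $S$ is irreducible, so the index set is a single polynomial and the Brauer--Manin type obstruction vanishes. The main obstacle I expect is the $2$-adic analysis: showing that precisely the two parity patterns (both squares, both twice squares) admit a local isometry of $\Lambda\otimes\mathbf{Z}_2=U^{\oplus3}\oplus A_1^{\oplus2}$ with the right action on the discriminant group. I would do this by working explicitly with $2$-adic lattices of determinant $4$ and the decomposition into $(x-1)$ and $(x+1)$-primary parts modulo $2$.
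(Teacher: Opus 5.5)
\textbf{Gap: local existence at $p=2$.} Your reduction to the lattice problem and your overall architecture match the paper: necessity from local invariants, sufficiency from local existence plus a local--global principle for irreducible $S$. The sufficiency direction, however, has a genuine gap exactly where you locate the main obstacle, namely local existence at $p=2$. You give no argument there, and the plan you sketch cannot work as stated.

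\begin{itemize}
\item Over $\mathbf{F}_2$ one has $x-1=x+1$, so there is no splitting into $(x-1)$- and $(x+1)$-primary parts.
\item The Hensel factor of $S$ over $\mathbf{Z}_2$ lifting the power of $x+1$ dividing $S \bmod 2$ carries all of $v_2(S(1))$ and $v_2(S(-1))$ at once.
\item Since $\Lambda\otimes\mathbf{Z}_2$ is not unimodular, the hard part is precisely controlling evenness and the discriminant form $\langle -1/2\rangle^{\oplus 2}$. The residue-map machinery does not do this at $2$ (cf.\ Remark~\ref{rem:strategy}).
\end{itemize}

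The missing idea is to reduce to the even unimodular case by gluing.
\begin{itemize}
\item Among even $\mathbf{Z}_2$-lattices of rank $8$ with discriminant form $\langle -1/2\rangle^{\oplus 2}$ there are only two classes, of determinants $-4$ and $12$. The determinant formula together with $-S(1)S(-1)\in(\mathbf{Q}^\times)^2$ singles out $\Lambda\otimes\mathbf{Z}_2$. So it suffices to find \emph{some} such lattice with an isometry of characteristic polynomial $S$.
\item If $|S(\pm1)|$ are twice squares, Bayer-Fluckiger's criterion gives an even unimodular $\mathbf{Z}_2$-lattice $M$ with an isometry $\Phi$ of characteristic polynomial $S(x)(x^2+1)$. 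The rank-$2$ part $\ker(\Phi^2+1)$ has a $\Phi$-stable even overlattice with discriminant form $\langle 1/2\rangle^{\oplus 2}$, after possibly rescaling by $-1$. Lemma~\ref{lem:stable even overlattices} transports this to a $\Phi$-stable even overlattice of $\ker S(\Phi)$ with discriminant form $\langle -1/2\rangle^{\oplus 2}$.
\item If $|S(\pm1)|$ are squares, the same argument works with $S(x)(x^2-1)$ on $U^{\oplus 5}$. One needs an extra case distinction on the $2$-adic valuations of the rank-$2$ part, and must exclude the case in which that part induces the swap on the discriminant group.
\end{itemize}

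The local--global principle also only produces a lattice in the genus of $\Lambda$. You need that $\Lambda$ is unique in its genus (Nikulin, Theorem~1.14.2), which your phrase ``given signature and discriminant form'' tacitly assumes.

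\textbf{Smaller points.} The monodromy group of OG6-type is $O^+(\Lambda)$, not $O(\Lambda)$. Your $\pm t$ device repairs this, but it is unnecessary: $\det t=1$ and $\operatorname{spin}_{\mathbf{R}}(t)=S(-1)(\mathbf{R}^\times)^2$ with $S(-1)>0$ give $t\in O^+(\Lambda)$ directly.

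Your necessity sketch is too vague as written (``one should get'', ``a square times a norm from $E_0$''). Moreover, the criterion you quote at odd $p$, that $(-1)^nS(1)S(-1)$ is a square, does not separate $S(1)$ from $S(-1)$. The sketch can nevertheless be made into a correct argument that is more elementary than the paper's, which glues $\Lambda$ with $A_1^{\oplus 2}(-1)$ carrying $\pm\mathrm{id}$ or an order-$4$ isometry and applies Gross--McMullen, resp.\ Bayer-Fluckiger's criterion, to the resulting even unimodular lattice:
\begin{itemize}
\item The determinant formula gives exactly $-S(1)S(-1)\in(\mathbf{Q}^\times)^2$, so $v_2(S(1))\equiv v_2(S(-1)) \bmod 2$.
\item At odd $p$, Hensel-split off the factor $S_+\equiv(x-1)^a \bmod p$. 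Then $\ker S_+(t)$ is a unimodular orthogonal summand of $\Lambda\otimes\mathbf{Z}_p$ with $S_+(-1)\in\mathbf{Z}_p^\times$. The determinant formula on it forces $v_p(S(1))$ to be even, and likewise for $S(-1)$.
\end{itemize}
This yields the dichotomy. Unlike the paper's gluing argument, however, it does not tell you which alternative corresponds to trivial versus nontrivial action on $A(\Lambda)$. The paper reuses that information in its $2$-adic existence step, in the ``both squares'' case.
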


Combined with the results from \cite[Part~III]{B 25},
we derive the following generalization 
of the irreducible characteristic polynomial statement from the introduction:

\begin{coro}
\label{cor:char}
    Let $X$ be a hyperk\"ahler manifold and assume that $X$ admits a bimeromorphism $f$ such that $f^\ast$ is nontrivial with irreducible minimal polynomial, denoted by $S$.
    Then $X$ is elliptic with even $b_2(X)$, $f$ is biregular, and $S$ is a Salem polynomial of degree $b_2(X)$. 

    If moreover $X$ has one of the known deformation types (cf.\ Table \ref{tab:info hk}),
    then one of the following alternatives holds true:
    \begin{enumerate}
        \item $X$ is a K3 surface, 
        and
        $|S(1)|$ and $|S(-1)|$ are both squares.
        \item 
        $X$ is of OG6-type,
        and
        $|S(1)|$ and  $|S(-1)|$ are both squares or both twice a square.
        \item 
        $X$ is of OG10-type,
        and
        $|3S(1)S(-1)|$ as well as one of $|S(1)|$ and $|S(-1)|$ is a square.
    \end{enumerate}
\end{coro}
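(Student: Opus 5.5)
We first establish the general part, valid for any hyperkähler $X$, and then deduce the list of alternatives from the known lattices in Table~\ref{tab:info hk}, using Theorems~\ref{thm1} and \ref{thm2} together with \cite[Part~III]{B 25}.

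\emph{General part.} Put $L=H^2(X,\mathbf{Z})$ and $V=L\otimes\mathbf{Q}$. Since $f^\ast$ preserves the Hodge decomposition, the line $H^{2,0}(X)=\mathbf{C}\sigma$ is an eigenline of $f^\ast_{\mathbf{C}}$, say $f^\ast\sigma=\alpha\sigma$. The lattice $\operatorname{NS}(X)$ and its orthogonal complement $T(X)$ are rational $f^\ast$-stable subspaces of $V$. Because the minimal polynomial $S$ is irreducible and $f^\ast$ is nontrivial, $V$ is a simple $\mathbf{Q}[f^\ast]$-module, isomorphic to the field $\mathbf{Q}[x]/(S)$. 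In particular, every nonzero $f^\ast$-stable rational subspace is all of $V$. Now $T(X)\otimes\mathbf{C}$ contains $\sigma\neq0$, so $T(X)_{\mathbf{Q}}=V$ and $\operatorname{NS}(X)=0$. Hence $X$ is elliptic (with $\rho=0$, so NS is trivially negative definite) and in particular not projective.

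By \cite[Theorem 3.4]{oguiso08}, $\lambda(f)$ is then $1$ or a Salem number. If $\lambda(f)=1$, then every eigenvalue of $f^\ast$ is an algebraic integer all of whose conjugates lie on the unit circle, hence a root of unity by Kronecker. So $S$ would be cyclotomic, and the eigenvalue $\alpha$ on $\sigma$ would have conjugates acting on the whole transcendental space. We must rule this out with a separate argument: an isometry of finite order on $V$ with $V$ simple and of signature $(3,b_2-3)$ would have to act on the positive $3$-plane spanned by $\operatorname{Re}\sigma$, $\operatorname{Im}\sigma$ and a Kähler class, and the Kähler class would then be fixed after averaging. A fixed real class gives a rational fixed vector because the eigenvalue $1$ is rational, and this contradicts simplicity unless $S=x-1$, i.e.\ $f^\ast=\id$. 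So $S$ is a Salem polynomial of degree $\dim V=b_2(X)$. Salem polynomials have even degree, so $b_2(X)$ is even.

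For biregularity we use the chamber structure. Since $\operatorname{NS}(X)=0$, there are no wall divisors (MBM classes) in $\operatorname{NS}$. The birational Kähler cone therefore equals the positive cone, and $f^\ast$ preserves the Kähler cone. By Hodge--Torelli (Theorem~\ref{thm:Hodge-Torelli}), a bimeromorphism preserving a Kähler class is biregular.

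\emph{Known types.} In Table~\ref{tab:info hk}, $b_2$ is even only for K3 ($22$), OG6 ($8$) and OG10 ($24$); $\mathrm{Kum}_n$ and $\mathrm{K3}^{[n]}$ have odd $b_2$ and are excluded. For K3 surfaces, \cite[Part~III]{B 25} gives exactly the condition that $|S(1)|$ and $|S(-1)|$ are squares. For OG6 and OG10, $f$ is a biholomorphism of an elliptic manifold with dynamical degree the Salem number of $S$. The ``only if'' directions of Theorems~\ref{thm2} and \ref{thm1} then give alternatives (2) and (3).

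\emph{Main obstacle.} The main difficulty is excluding $\lambda(f)=1$ cleanly, so that $S$ is Salem rather than cyclotomic. A cyclotomic $S$ of degree $b_2$ can in principle occur for isometries of $L$. The argument must therefore genuinely use the geometry: the existence of an invariant Kähler class (or the fact that finite-order automorphisms preserve a Kähler class) and the signature $(3,b_2-3)$, which force a fixed vector and contradict irreducibility.
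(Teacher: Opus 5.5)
Your general part has a genuine gap at its first step. The irreducibility of the minimal polynomial $S$ does not make $V=H^2(X,\mathbf{Q})$ simple. It only makes $V$ a vector space over $E=\mathbf{Q}[x]/(S)$, so $V\cong E^k$ and the characteristic polynomial is $S^k$ with $k=b_2(X)/\deg S$. The module is simple only if $k=1$; think of $-\id$, whose minimal polynomial $x+1$ is irreducible. For $k\geq 2$ there are many proper $f^\ast$-stable rational subspaces, so $\operatorname{NS}(X)_{\mathbf{Q}}$ could a priori be a nonzero $E$-subspace. Everything you derive from simplicity is therefore unproved: $\operatorname{NS}(X)=0$, ellipticity (hence your appeal to \cite{oguiso08}), biregularity, $\deg S=b_2(X)$, and the parity of $b_2(X)$.

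The missing ingredient is the multiplicity-one statement the paper takes from the proof of \cite[Theorem~3.2]{Mc1}. The isometry $f^\ast$ preserves $H^{1,1}(X,\mathbf{R})$, of signature $(1,b_2(X)-3)$, and acts on $H^{2,0}(X)$ by a scalar of modulus one. An isometry of a Lorentzian space has at most one eigenvalue of modulus $>1$, and it is simple. Hence the characteristic polynomial of $f^\ast$ is a product of cyclotomic polynomials and at most one Salem factor, occurring once. This gives the Salem/cyclotomic dichotomy without first knowing that $X$ is elliptic. In the Salem case it forces $S^k=S$, i.e.\ $k=1$, and only then does your argument give $\operatorname{NS}(X)=0$ and $\deg S=b_2(X)$. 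A signature count also works: the $\lambda^{\pm1}$-eigenspaces contribute $(k,k)$, each pair of conjugate unit-circle roots contributes even indices, and the pair carrying $H^{2,0}(X)$ contributes at least $2$ to the positive index, so $k+2\leq 3$.

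The rest is fine. Your exclusion of the cyclotomic case takes a different route from the paper. The paper uses McMullen's signature formula: a cyclotomic $S\neq x\pm1$ would make both inertia indices even, contradicting positive index $3$, and $x+1$ is ruled out because $-\id$ is not orientation-preserving. You instead average a class of the positive cone over the finite group generated by $f^\ast$ to get a nonzero fixed vector. That argument is valid and does not even need simplicity: a fixed vector makes $1$ a root of the irreducible $S$, so $S=x-1$ and $f^\ast=\id$. Your ``main obstacle'' is therefore misplaced: the cyclotomic case is the easy part, and the multiplicity $k$ is the real difficulty. Once $k=1$ is established, your biregularity argument and the reduction to K3, OG6 and OG10 via \cite[Part~III]{B 25} and Theorems~\ref{thm1} and \ref{thm2} coincide with the paper's.
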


The analogous problem for reducible minimal polynomials is analysed 
to some extent in Section \ref{2toN}
(see Theorems \ref{thm:k3 to hidger dim} and \ref{thm_from_tori}).

\medskip

The paper is organized as follows. We first gather some definitions and preliminaries about lattices, isometries of quadratic forms and Witt groups. We then prove the lattice-theoretic counterparts of Theorems~\ref{thm1} and \ref{thm2}. 
After recalling the basics regarding the period maps and the Hodge-theoretic version of the Torelli theorems for hyperk\"ahler manifolds, we use the  lattice-theoretic existence results to complete the proofs of the main theorems. 
The paper concludes by relating the dynamical degrees of many other bimeromorphisms of hyperk\"ahler manifolds with the ones of complex K3 surfaces and complex 2-tori.

\section{Definitions, notation and basic facts}\label{basic}

We start by recalling some notions and results from \cite {M}, \cite {B 15}, \cite {BT} and \cite {B 24}.
Let $K$ be a field.

\subsection{Isometries}

Let $V$ be a finite dimensional
$K$-vector space, and let $q\colon V \times V \to K$ be a nondegenerate symmetric bilinear form. We refer to $(V, q)$ as a symmetric $K$-bilinear form. An {\it isometry} of $(V,q)$ is by definition
an isomorphism $\varphi\colon V \to V$ such that $q(\varphi x,\varphi y) = q(x,y)$ for all $x, y \in V$.

\medskip

If
$g \in K[x]$ is a monic polynomial such that $g(0) \not = 0$,  set $$g^*(x) := {g(0)}^{-1} x^{{\rm deg}(g)} g(x^{-1});$$ we say that $g$ is
{\it symmetric} if $g^* = g$.  
It is well-known that the characteristic polynomial of an isometry
is symmetric (see for instance \cite[Proposition~1.1]{B 15}). Moreover the following holds:

\begin{lemma}[{{\cite[Corollary~5.2]{B 15}}}]
    Let $(V, q)$ be a symmetric $K$-bilinear form and let $g\in K[x]$ be the characteristic polynomial of an isometry of $(V, q)$. Suppose that $g(1)g(-1)\neq 0$. Then
    \begin{equation}\label{eq:det formula}
        \det(q) = g(1)g(-1)(K^\times)^2 \in K^\times/(K^\times)^2.
    \end{equation}
\end{lemma}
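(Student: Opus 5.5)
The plan is to reduce to a normal form for the isometry and then compute both sides of \eqref{eq:det formula} explicitly. Let $t\colon V\to V$ be an isometry with characteristic polynomial $g$, and regard $V$ as a $K[x]$-module via $t$. Since $g(1)g(-1)\neq 0$, neither $1$ nor $-1$ is an eigenvalue of $t$. Hence $t-t^{-1}$ is invertible.

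The first step is to rewrite $q$ using this invertibility. Set $u:=t-t^{-1}$. For any isometry the adjoint of $t$ with respect to $q$ is $t^{-1}$, so the adjoint of $u$ is $t^{-1}-t=-u$. Define the bilinear form $b(x,y):=q(ux,y)$. Then
$$b(y,x)=q(uy,x)=q(y,-ux)=-b(x,y),$$
so $b$ is alternating; it is nondegenerate because $u$ is invertible. Consequently $\dim V$ is even, say $2n$. Since the determinant of a nondegenerate alternating form is a square (its Pfaffian squared), we obtain $\det(b)\in (K^\times)^2$. On the other hand, the Gram matrices satisfy $\det(b)=\det(u)\det(q)$. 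Therefore
$$\det(q)\equiv\det(u)=\det(t-t^{-1}) \pmod{(K^\times)^2}.$$

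The second step computes $\det(t-t^{-1})$ in terms of $g$. We have $t-t^{-1}=t^{-1}(t-1)(t+1)$. Now $\det(t-1)=g(1)$ and $\det(t+1)=(-1)^{2n}g(-1)=g(-1)$, because $g(x)=\det(x-t)$ has even degree $2n$. Finally, $\det(t^{-1})=\det(t)^{-1}$, and $\det(t)=g(0)$. Because $g$ is symmetric, $g(0)=\pm1$; in fact $g(0)=\det t=\pm1$ for any isometry, since $\det(t)^2=1$.

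This gives $\det(q)\equiv \pm g(1)g(-1)$ modulo squares. The only remaining issue is the sign, and this is where I expect the real work. The sign is fixed by showing $\det t=1$ when $\pm1$ are not roots of $g$. The roots of $g$ are then closed under $\lambda\mapsto\lambda^{-1}$ with $\lambda\neq\lambda^{-1}$, so they pair off. Counted with multiplicity over $\overline K$, these pairs multiply to $1$, which gives $\det t=g(0)=1$. If $\operatorname{char}K=2$ the statement is trivial modulo squares issues, or can be handled by the same pairing argument.

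Combining the two steps yields $\det(q)=g(1)g(-1)$ in $K^\times/(K^\times)^2$, which is \eqref{eq:det formula}.
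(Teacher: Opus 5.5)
The paper gives no proof of this lemma; it is quoted from \cite[Corollary~5.2]{B 15}. Your argument is a correct, self-contained alternative. Its key observation is that $b(x,y)=q((t-t^{-1})x,y)$ is a nondegenerate alternating form. This gives at once that $\dim V$ is even and that $\det(q)\equiv\det(t-t^{-1})$ modulo squares, for an \emph{arbitrary} isometry $t$. You never need to decompose $V$ along the irreducible factors of $g$ or to assume $t$ semisimple, so the proof is uniform and purely linear-algebraic.

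The sign step is less delicate than you feared, and your pairing argument settles it. You should say explicitly that $\lambda$ and $\lambda^{-1}$ have equal multiplicities. This holds because $t^{-1}$, being the $q$-adjoint of $t$, is similar to $t$; equivalently, $g^*=g$. You could also avoid the sign step entirely. In characteristic $\neq 2$, the Cayley transform $c=(1+t)(1-t)^{-1}$ is skew-adjoint as well, since $c^*=(1-t^{-1})^{-1}(1+t^{-1})=(t-1)^{-1}(t+1)=-c$. Moreover $\det c=g(-1)/g(1)$, so the same Pfaffian argument yields $\det(q)\equiv g(1)g(-1)$ without computing $\det t$.

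One small correction is needed. Skew-symmetry implies that $b$ is alternating only when $\operatorname{char}K\neq 2$, yet the paper states the lemma for an arbitrary field $K$. Instead, check directly that $b(x,x)=q(tx,x)-q(t^{-1}x,x)=q(tx,x)-q(x,tx)=0$, which holds in every characteristic. In characteristic $2$ the sign question disappears anyway, since $-1=1$. So your closing remark about characteristic $2$ points at the wrong step: the only issue there is the alternating property, not the sign.
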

For the rest of the paper, we refer to (\ref{eq:det formula}) as \emph{determinant formula}.

\subsection{Equivariant Witt groups and residue maps}

 Let $\Gamma$ be the infinite cyclic group denoted multiplicatively, and
let $K[\Gamma]$ be the associated group ring. Sending $\gamma \in \Gamma$ to $\gamma^{-1}$
induces a $K$-linear involution of $K[\Gamma]$; let $W_{\Gamma}(K)$ be the  Witt
group of symmetric $K[\Gamma]$-bilinear forms (see  \cite[Section~3]{BT}). If $K$ is the field of fractions of some Dedekind domain, we let $W^b_{\Gamma}(K)$ be the subgroup of $W_\Gamma(K)$ generated by bounded forms \cite[Definition~4.2]{BT}.

\medskip

Let $(V, q)$ be a symmetric $K$-bilinear form, and let $\varphi$ be an isometry of $(V, q)$. Let $\gamma$ be a generator
of $\Gamma$. Let us define a $K[\Gamma]$-module structure on $V$ by setting 
$\gamma\cdot v := \varphi(v)$ for all $v \in V$. This gives $(V, q)$ the structure of a symmetric $K[\Gamma]$-bilinear form, and
we obtain an element $[V] = [V,q]$ of $W_{\Gamma}(K)$.

\medskip 

If $K$ is a local field with residue field $\kappa$, we have a residue map $\partial\colon W^b_{\Gamma}(K) \to
W_{\Gamma}(\kappa)$, see \cite[Section~4]{BT}. A symmetric $K[\Gamma]$-bilinear form contains a unimodular
lattice if and only if it is defined on a bounded form, and its class in $W^b_{\Gamma}(K)$ lies in the kernel of $\partial$
(cf. \cite[Theorem~4.3(iv)]{BT}). 
 
 \subsection{Symmetric polynomials over the integers}

 The following definitions and notation are used throughout the paper.

\begin{defn} Let $g \in \mathbf{Z}[x]$ be a monic, symmetric polynomial. We say that $g$ is of 

\begin{itemize}
\item
{\it type} 0 if $g$ is a product of powers of $x-1$ and of $x+1$;

\item
{\it type} 1  if $g$ is a product of powers of monic, symmetric, irreducible polynomials in
$\mathbf{Z}[x]$ of even degree;

\item
{\it type} 2 if $g$ is a product of polynomials of the form $h h^*$, where $h \in \mathbf{Z}[x]$ is
monic, irreducible, and $h \not = \pm  h^*$. 
\end{itemize}
\end{defn}

It is well-known that every monic, symmetric polynomial  is a product of polynomials
of type {\rm 0, 1} and {\rm 2}.

\begin{example}
    Examples of irreducible polynomials of type 1 are cyclotomic and Salem polynomials.
\end{example}

\begin{notation}\label{notat:even symmetric poly decomp}
    If $g \in \mathbf{Z}[x]$ is a monic, symmetric polynomial, we write $g = g_0 g_1 g_2$, where $g_i$ is the product of the irreducible factors of type $i$ of $g$. We have
    $g_0(x) = (x-1)^{n^+(g)} (x+1)^{n^-(g)}$ for some integers $n^+(g),n^-(g) \geqslant 0$, and $g_1 = \underset{h \in I_1(g)} \prod h^{n_h(g)}$, where $I_1(g)$ is the set of irreducible factors of type 1 of $g$.
\end{notation}

\subsection{Lattices and discriminant groups}

Let $R = \mathbf{Z}$ or $\mathbf{Z}_p$ for some $p$ prime, and let $K := \operatorname{Frac}(R)$. We call $R$-lattice a finitely generated free $R$-module equipped with a $K$-valued nondegenerate symmetric bilinear form. If the form takes values in $R$, we call the $R$-lattice integral. In this paper, ADE root lattices associated to the corresponding Dynkin diagrams are assumed to be negative definite.

\begin{notation}
    Let $n\geq 1$ be a positive integer, let $m\in\ZZ\setminus\{0\}$, and let $B$ be a group. Given a symbol $D$ representing the data consisting of
    \begin{itemize}
        \item a group $A$,
        \item a $B$-valued $n$-ary function $q$ on $A$,
    \end{itemize}
    we let $D(m)$ represent the same group $A$ but equipped with the function $mq$.
\end{notation}

 Given an integral $R$-lattice $L$, we let $A(L) := L^\vee/L$ be its discriminant group; it is a finite abelian group. We say that $L$ is $n$-elementary, for some integer $n\geq 1$, if $nL^\vee\subset L$. When the quadratic form on $L$ takes values in $2R$ (i.e., $L$ is even), the group $A(L)$ is equipped with a \emph{torsion quadratic form}
\[q_L\colon A(L)\to K/2R,\; x+L\mapsto (x,x)+2R.\]
Given a morphism $\varphi\colon L_1\to L_2$ of $R$-lattices, we denote by $A(\varphi)$ the induced morphism between discriminant groups; if $L_1$ and $L_2$ are even, $A(\varphi)$ respects the torsion quadratic forms. We denote by $\mathcal{M}(L, \varphi)$ the set of $\varphi$-stable even overlattices $L\subset M \subset L^\vee$ and by $\mathcal{H}(L, \varphi)$ the set of $A(\varphi)$-stable totally $q_L$-isotropic subgroups $H\subset A(L)$.

\begin{prop}[{{\cite[Propositions~1.4.1 and 1.4.2]{N}}}]\label{thm:corr even overlat and isotrop subgroup}
    Let $L$ be an even $R$-lattice and let $\varphi\in O(L)$ be an isometry. Then there is a one-to-one correspondence
    \[\begin{array}{ccc}
        \mathcal{M}(L, \varphi)&\leftrightarrow&\mathcal{H}(L, \varphi)\\
         M&\mapsto& M/L\\
         \{x\in L^\vee\ |\ x+L\in H\}&\mathrel{\reflectbox{\ensuremath{\mapsto}}}&H
    \end{array}.
   \]
   Moreover, given $H = M/L\in \mathcal{H}(L, \varphi)$, there is an isomorphism of torsion quadratic forms $A(M)\cong H^\perp/H$ which is compatible with the respective $\varphi$-actions, where the quadratic form on the right is induced from $q_L$.
\end{prop}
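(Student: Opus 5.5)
The plan is to prove the two halves separately: first that $M \mapsto M/L$ lands in $\mathcal{H}(L,\varphi)$ and is a bijection, and then the statement about $A(M)$, in each case checking compatibility with $\varphi$.

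\textbf{Integral overlattices and subgroups.} Any overlattice $L\subset M$ on which the form is $R$-valued satisfies $M\subset M^\vee\subset L^\vee$. So $M$ is determined by the subgroup $H=M/L\subset A(L)$, and conversely $M_H:=\{x\in L^\vee \mid x+L\in H\}$ is a finitely generated $R$-submodule of $L^\vee$ containing $L$. It is therefore a lattice of the same rank, and the two assignments are mutually inverse on the level of subgroups.

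\textbf{Evenness and $\varphi$-stability.} For $x\in M_H$, the value $q_L(x+L)=(x,x)+2R$ is well defined. Hence $(x,x)\in 2R$ for all $x\in M_H$ exactly when $q_L|_H=0$. Evenness then gives integrality by polarization: $2(x,y)=(x+y,x+y)-(x,x)-(y,y)\in 2R$. Thus $M_H$ is even if and only if $H$ is totally $q_L$-isotropic. For stability, $\varphi$ preserves $L^\vee$ since it is an isometry of $L$, and $A(\varphi)$ is the induced map on $L^\vee/L$. It follows directly that $\varphi(M_H)=M_H$ if and only if $A(\varphi)(H)=H$; since $\varphi$ is an automorphism, stability under $\varphi$ and equality of images coincide for finite $H$. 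This gives the bijection $\mathcal{M}(L,\varphi)\leftrightarrow\mathcal{H}(L,\varphi)$.

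\textbf{The discriminant form of $M$.} Taking duals of $L\subset M\subset L^\vee$ gives $M^\vee=\{x\in L^\vee \mid (x,M)\subset R\}$. Writing $b_L$ for the discriminant bilinear form, valued in $K/R$, we get $M^\vee/L=H^{\perp}$ with respect to $b_L$. Therefore $A(M)=M^\vee/M\cong (M^\vee/L)/(M/L)=H^\perp/H$. The quadratic form on $A(M)$ is computed on representatives in $M^\vee\subset L^\vee$ with the same formula $(x,x)$ modulo $2R$. It therefore agrees with the form induced from $q_L$, which is well defined on $H^\perp/H$ because $H$ is isotropic and orthogonal to $H^\perp$.

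\textbf{Equivariance.} The isomorphism is compatible with the $\varphi$-actions because it is induced by the identity on $L^\vee$, and $\varphi$ preserves $L$, $M$ and $M^\vee$.

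No step is a serious obstacle. The most care is needed at the passage from the bilinear isotropy condition to the quadratic one, $q_L|_H=0$. This is exactly where evenness over $R=\mathbf{Z}_2$ matters, and it is handled by the polarization identity above.
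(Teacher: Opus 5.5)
Your proposal is correct. It is the standard argument behind Nikulin's Propositions~1.4.1 and~1.4.2, which the paper cites without reproving. The one point you leave implicit is harmless: for $R=\mathbf{Z}_p$, every subgroup of the finite $p$-group $A(L)$ is automatically a $\mathbf{Z}_p$-submodule, so $M_H$ really is an $R$-lattice.
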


In particular, given an even $R$-lattice $L$ and an isometry $\varphi\in O(L)$, the discriminant groups of the $\varphi$-stable even overlattices of $L$ are completely determined by $A(L)$ and $A(\varphi)\in O(A(L),q_L)$. As a consequence, we deduce the following result, to which we give a short proof for the reader's convenience.

\begin{lemma}\label{lem:stable even overlattices}
    Let $L$ be an even unimodular $R$-lattice and let $\varphi\in O(L)$ be a semisimple isometry whose minimal polynomial is a product of two polynomials $g_1,\,g_2\in \mathbf{Z}[x]$ which are coprime in $\mathbf{Q}[x]$. For every $i\in\{1,2\}$, let $L_i := \ker g_i(\varphi)$ and let $\varphi_i:= \varphi\mid_{L_i}$. Then there is a canonical bijection
    \[\gamma\colon \mathcal{M}(L_1, \varphi_1)\to \mathcal{M}(L_2, \varphi_2)\]
    such that for all $M\in \mathcal{M}(L_1, \varphi_1)$, the following holds:
    \[A(\gamma(M))\cong A(M)(-1).\]
\end{lemma}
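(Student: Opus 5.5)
The plan is to reduce everything to the standard gluing theory of primitive sublattices in a unimodular lattice, and then do it equivariantly. Since $g_1$ and $g_2$ are coprime in $\mathbf{Q}[x]$ and $\varphi$ is semisimple with minimal polynomial $g_1g_2$, we have $L\otimes K = (L_1\otimes K)\oplus(L_2\otimes K)$. This decomposition is orthogonal: for $x\in L_1$ and $y\in L_2$, write $1 = a g_1 + b g_2$ and use that the adjoint of $\varphi$ is $\varphi^{-1}$. Each $L_i$ is primitive in $L$, being the kernel of an endomorphism of $L$, and $L_2 = L_1^\perp$. Because $L$ is unimodular, Nikulin's gluing gives the following. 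The projections $L\to L_i^\vee$ (via $L = L^\vee \to L_i^\vee$, restriction of the form) are surjective. Moreover $L/(L_1\oplus L_2)$ is the graph of an anti-isometry
\[
\psi\colon A(L_1)\xrightarrow{\sim} A(L_2),\qquad q_{L_2}\circ\psi = -q_{L_1}.
\]
Since $L$, $L_1$ and $L_2$ are all $\varphi$-stable, the glue subgroup is $A(\varphi_1)\oplus A(\varphi_2)$-stable, and hence $\psi\circ A(\varphi_1) = A(\varphi_2)\circ\psi$.

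Define $\gamma$ through Proposition \ref{thm:corr even overlat and isotrop subgroup}. Given $M\in\mathcal{M}(L_1,\varphi_1)$, set $H = M/L_1\in\mathcal{H}(L_1,\varphi_1)$. Then $\psi(H)$ is totally $q_{L_2}$-isotropic because $q_{L_2}(\psi h) = -q_{L_1}(h) = 0$. It is also $A(\varphi_2)$-stable by the equivariance of $\psi$. So $\psi(H)\in\mathcal{H}(L_2,\varphi_2)$, and we let $\gamma(M)$ be the corresponding overlattice of $L_2$. The inverse is built the same way from $\psi^{-1}$, so $\gamma$ is a bijection. It is canonical, since $\psi$ is determined by $L$ and the $L_i$, which in turn are determined by $\varphi$ and the $g_i$.

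For the discriminant forms, Proposition \ref{thm:corr even overlat and isotrop subgroup} gives $A(M)\cong H^\perp/H$ and $A(\gamma(M))\cong \psi(H)^\perp/\psi(H)$. Since $\psi$ is an anti-isometry, $\psi(H^\perp) = \psi(H)^\perp$, where orthogonality is taken with respect to the associated bilinear forms, which also change sign. Hence $\psi$ induces an isomorphism $H^\perp/H \to \psi(H)^\perp/\psi(H)$ that multiplies the quadratic form by $-1$. This gives $A(\gamma(M))\cong A(M)(-1)$, compatibly with the $\varphi$-actions.

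The main point requiring care is the gluing step for $R = \mathbf{Z}_p$ as well as $R=\mathbf{Z}$. Concretely, one must show that the projection $L\to L_1^\vee$ is surjective and that its kernel on the glue is exactly $L_1$. Both follow from unimodularity: any functional on $L_1$ extends to $L$, because $L_1$ is a primitive sublattice and so a direct summand over the PID $R$, and it is therefore represented by an element of $L = L^\vee$. Evenness of $L_1$ and $L_2$ is inherited from $L$, so the quadratic forms $q_{L_i}$ are defined. The relation $q_{L_2}\circ\psi = -q_{L_1}$ comes from $x = x_1 + x_2\in L$ having $q(x)\in 2R$.
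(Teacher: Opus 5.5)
Your proposal is correct and follows the same route as the paper. The paper likewise uses that $L/(L_1\oplus L_2)$ is the graph of a $\varphi$-equivariant anti-isometry $a\colon A(L_1)\to A(L_2)$, sends $H\mapsto a(H)$, and concludes with Nikulin's correspondence (Proposition~\ref{thm:corr even overlat and isotrop subgroup}); you simply supply more of the details.

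One caveat concerns your B\'ezout argument for $L_1\perp L_2$, which silently uses that $g_1(\varphi^{-1})$ kills $L_1$, i.e.\ that $g_1$ (equivalently $g_2$) is symmetric. This fails if $g_1$ is irreducible with $g_2=g_1^*\neq g_1$, since then both $L_i$ are totally isotropic. The symmetry is an implicit hypothesis of the lemma (without it $A(L_i)$ makes no sense) and the paper also uses it tacitly, but you should state it explicitly.
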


\begin{proof}
    Since $L$ is unimodular, the group $L/(L_1\oplus L_2)$ is the graph of an isomorphism of abelian groups
    \[a\colon A(L_1)\xrightarrow{\cong} A(L_2)\]
    such that both of the following hold:
    \begin{enumerate}
        \item for all $x\in A(L_1)$, we have $q_{L_2}(a(x)) = -q_{L_1}(x)$ (because $L$ is even),
        \item $A(\varphi_2)\circ a = a\circ A(\varphi_1)$ (because $\varphi_1\oplus \varphi_2$ preserves $L$).
    \end{enumerate}
    (See \cite[Chapter~1, Section~5]{N} for more details). In particular, the isomorphism $a$ induces a canonical bijection between $\mathcal{H}(L_1, \varphi_1)$ and $\mathcal{H}(L_2, \varphi_2)$ by sending $H$ to $a(H)$. The rest of the proof is now a consequence of Proposition~\ref{thm:corr even overlat and isotrop subgroup}.
\end{proof}

\section{Local results}\label{residuemaps}

Assume that $K$ is a non-archimedean local field of characteristic different from 2, that
$E$ is a finite field extension of $K$ with a nontrivial involution $\sigma \colon E \to E$ and
that $E_0$ is the fixed field of $\sigma$. Set $T(E,\sigma) := E_0^{\times}/{\rm N}_{E/E_0}(E^{\times})$. 
Let $\ell \colon E \to K$ be a nontrivial $K$-linear map such
that $\ell(\sigma(x) )= \ell(x)$ for all $x \in E$. Let $\mathcal{O}_K$ be the ring of integers of $K$, and let
$k$ be its residue field. Let $\mathcal{O}_E$ be the ring of integers of $E$, and let $\mathfrak{m}_E$ be its maximal ideal; set $\kappa_E := \mathcal{O}_E/\mathfrak{m}_E$. 

\medskip
If $n \geqslant 1$ is an integer, we define a map $$t_n\colon T(E,\sigma) \to W_\Gamma^b(K)$$ as follows. For
$[\lambda] \in T(E,\sigma)$, choose a representative $\lambda\in E_0^\times$ and let us denote by $h_{n,\lambda}$ the $n$-dimensional diagonal
hermitian form $\langle \lambda, 1, \dots, 1 \rangle$ over $E$ with respect to the involution $\sigma$, and set
$q_{n,\lambda} :=\ell(h_{n,\lambda})$. This construction turns $(E^n, q_{n, \lambda})$ into a symmetric $K$-bilinear form.  For every $\alpha\in E^\times$ with $\alpha\sigma(\alpha) = 1$, multiplication by $\alpha$ on $E^n$ defines an isometry of $(E^n, q_{n, \lambda})$. The triple $(E^n, q_{n,\lambda}, \alpha)$ defines a bounded symmetric $K[\Gamma]$-bilinear form; we denote by $[E, q_{n,\lambda},\alpha]\in W^b_{\Gamma}(K)$ the associated Witt class.
Recall that we have a homomorphism $\partial\colon W_{\Gamma}^b(K) \to W_{\Gamma}(k)$.

\begin{prop}[{{\cite[Proposition~11.1]{B 24}}}]\label{residue} For all integers $n \geqslant 1$, the following hold:

\medskip

 {\rm (a)} If $E/E_0$ is inert, then 

$$\partial  \circ t_n \colon T(E,\sigma) \to  W_{\Gamma}(k)$$ is a bijection.

\medskip

 {\rm (b)} If $E/E_0$ is ramified and ${\rm char}(k) \not = 2$, then 

$$\partial  \circ t_n \colon T(E,\sigma) \to W_{\Gamma}(k)$$ is injective, and its
image consists of the classes of forms of dimension $n[\kappa_E:k]\mod 2$.

\medskip
{\rm (c)} If $E/E_0$ is ramified and ${\rm char}(k) = 2$, then 

$$\partial  \circ t_n \colon T(E,\sigma) \to  W_{\Gamma}(k)$$ is constant.

\end{prop}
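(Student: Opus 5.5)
This is a cited result (\cite[Proposition~11.1]{B 24}), so the plan is to reconstruct its proof from the structure of the residue map $\partial$ on bounded $K[\Gamma]$-forms. The first step is to make $\partial(t_n([\lambda]))$ explicit.

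Write $(E^n, q_{n,\lambda}, \alpha)$ as the orthogonal sum of the rank-one piece $(E, \ell(\lambda x\sigma(y)), \alpha)$ and $n-1$ copies of the piece with $\lambda=1$. The trace form attached to $\ell$ has an inverse different $\mathcal{D}^{-1}$ relative to $\ell$. A suitable fractional ideal $\mathfrak{a}$ of $\mathcal{O}_E$ gives an $\alpha$-stable lattice $\mathfrak{a}\subset E$ on which $\ell(\lambda x\sigma(y))$ is almost unimodular. The residue is then computed as the class of the induced form on $\mathfrak{a}^\sharp/\mathfrak{a}$ with its $\Gamma$-action by $\alpha$. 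Equivalently, after scaling by a uniformizer, it is a hermitian form over $\kappa_E$, with involution induced by $\sigma$, transferred to $k$.

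The key local fact is that $\partial$ of such a rank-one piece depends only on the parity of the valuation $v_E(\lambda\cdot d)$, with $d$ a generator of $\mathcal{D}^{-1}$, and on the residue class of the unit part. Since $\lambda\in E_0$, the norm group must be analysed in each case. Hence $\partial\circ t_n$ is a homomorphism-like map determined by how $T(E,\sigma)$, which has order $1$ or $2$ by local class field theory, is detected residually.

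\emph{Case (a), inert.} Here $T(E,\sigma)\cong \mathbf{Z}/2$, detected by $v_{E_0}(\lambda)\bmod 2$, since units are norms. For $v(\lambda)$ even, the form is unimodular on a suitable lattice (after twisting by a unit), so the residue is $0$. For $v(\lambda)$ odd, no unimodular lattice exists in that piece, and the residue is the nonzero class $\kappa_E$ with the hermitian form $\langle 1\rangle$ and action by $\bar\alpha$. It remains to see that the relevant part of $W_\Gamma(k)$ (the component at the irreducible factor of $\bar\alpha$) has exactly two elements, so the map is bijective. I read $W_\Gamma(k)$ here as that isotypic summand, as in \cite{B 24}; this is the $\kappa_E/\kappa_{E_0}$ quadratic extension case, where hermitian forms over finite fields are classified by dimension mod $2$.

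\emph{Case (b), ramified with odd residue characteristic.} Here $\kappa_E=\kappa_{E_0}$ and $\bar\alpha=\pm1$, so the residue is a plain quadratic form over $\kappa_E$, transferred to $k$. The class $[\lambda]$ is detected by whether the unit part of $\lambda$ is a square in $\kappa_{E_0}$. A uniformizer of $E$ can be chosen with $\pi^2\in E_0$ and $\sigma(\pi)=-\pi$, and then $-\pi^2$ is a norm. Dimension mod $2$ is fixed by $n[\kappa_E:k]$, since each rank-one piece contributes one $\kappa_E$-dimension. Discriminants of the two values differ by a nonsquare, so the map is injective, and its image is the set of classes of that dimension parity.

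\emph{Case (c), residue characteristic $2$.} Here $\bar\alpha=1$ in characteristic $2$, and both values of $\lambda$ give residue forms that become isometric, for instance because every element of a finite field of characteristic $2$ is a square. Hence the map is constant.

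The main obstacle is the explicit residue computation: choosing the right $\alpha$-stable lattice given the different, and verifying that twisting by $\lambda$ changes the residue exactly as claimed, especially in the ramified case where the different's valuation interacts with the parity. I would handle this first in the rank-one case using the scaling trick with $\ell=\mathrm{tr}_{E/K}(d\,\cdot)$.
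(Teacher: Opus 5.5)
The paper gives no proof of this statement; it quotes \cite[Proposition~11.1]{B 24}, so I am judging your reconstruction on its own. Your framework is the right one: split off the rank-one summands $(E,\ell(\lambda x\sigma(y)),\alpha)$ and compute $\partial$ on an $\alpha$-stable fractional ideal $\mathfrak m_E^a$. Your treatment of (c) is essentially fine. However, the step you defer as ``the main obstacle'' is exactly where (a) and (b) are decided, and what you assert in its place is partly wrong.

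Since $\ell\circ\sigma=\ell$, one has $\ell=\mathrm{Tr}_{E/K}(d_0\,\cdot)$ with $d_0\in E_0^\times$. Hence the dual of $\mathfrak m_E^a$ is $(d_0\lambda)^{-1}\mathcal D_{E/K}^{-1}\mathfrak m_E^{-a}$. The residue of the summand is therefore represented by $0$ or by a one-dimensional $\kappa_E$-space according as
\[
N(\lambda):=v_E(d_0\lambda)+v_E(\mathcal D_{E/E_0})+e(E/E_0)\,v_{E_0}(\mathcal D_{E_0/K})
\]
is even or odd.

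In (a) this is $v_{E_0}(\lambda)+v_{E_0}(d_0)+v_{E_0}(\mathcal D_{E_0/K})$. So your rule ``$v(\lambda)$ even $\Rightarrow$ residue $0$'' fails for suitable $\ell$ (e.g.\ $E_0/K$ unramified and $d_0$ a uniformizer). This is harmless for bijectivity, since the two classes still have opposite parities.

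In (b), however, you simply assert that every summand contributes one $\kappa_E$-dimension. This holds only because $v_E(d_0\lambda)$ and $2v_{E_0}(\mathcal D_{E_0/K})$ are even, while $v_E(\mathcal D_{E/E_0})=1$ for a \emph{tamely} ramified quadratic extension; so $N(\lambda)$ is odd for every $\lambda\in E_0^\times$. This is precisely where $\mathrm{char}(k)\neq 2$ enters the dimension statement. In residue characteristic $2$ one has $v_E(\mathcal D_{E/E_0})=t+1$ with $t$ the ramification break, which can make $N(\lambda)$ even (e.g.\ for $\mathbf Q_2(i)/\mathbf Q_2$), and then all residues vanish. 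The same parity fact is also what guarantees that both classes have residues of the same dimension $n[\kappa_E:k]$, which your discriminant comparison needs.

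A second, smaller point concerns (a). You identify the relevant summand of $W_\Gamma(k)$ with the two-element hermitian Witt group of $(k(\bar\alpha),\bar\sigma)$; this requires $\bar\alpha\neq\pm1$. To see that the residue is nonzero there, you should note that $[\kappa_E:k(\bar\alpha)]$ is odd (because $k(\bar\alpha)\not\subset\kappa_{E_0}$), so the hermitian form obtained by transfer has odd dimension. When $E/E_0$ is inert and $\bar\alpha=\pm1$ (the hypotheses as transcribed even allow $\alpha=1$), the summand is $W(k)$ and your count breaks down. You need to make explicit the convention of \cite{B 24} that excludes this case.
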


\medskip
If $p$ is a prime number, we denote by $v_p$ the $p$-adic valuation.

\begin{notation} If $g \in \mathbf{Z}[x]$ is a monic, irreducible, symmetric polynomial of even degree, set $E_g := \mathbf{Q}[x]/(g)$, let $\sigma_g \colon E_g \to E_g$ be the involution
induced by $x \mapsto x^{-1}$, and let $(E_g)_0$ be the fixed field of $\sigma_g$ in $E_g$. Let $\alpha \in E_g$ be the image of $x$.
\end{notation}

\begin{defn}\label{ramified} Let $g \in \mathbf{Z}[x]$ be a monic, irreducible, symmetric polynomial of even degree, and let $p$ be a prime number. We say that
$g$ is {\it ramified at} $p$ if there exists a place $w$ of $(E_g)_0$ above $p$ that is ramified in 
$E_g$; otherwise, we say that $g$ is {\it unramified} at $p$. We denote by $\Pi^r_g$ the set of prime numbers $p$ such that $g$ is ramified at $p$.

\medskip Let $p \in \Pi^r_g$ be an odd prime number.  If $w$ is a place of $(E_g)_0$ above $p$ that is ramified in $E_g$, we denote by $\kappa_w$  the
residue field of $w$, and by $\overline{\alpha}$ the image of $\alpha$ in $\kappa_w$; we denote by $\Omega_+$ the set of places $w$ above $p$ such that $\overline{\alpha} = 1$,
and by $\Omega_-$ the set of places $w$ above $p$ such that $\overline{\alpha} = - 1$. We denote by 
 $\Pi^{r,+}_g$ the set of prime numbers $p$ such that there exists a place $w$ above $p$ with $w \in \Omega_+$, and by $\Pi^{r,-}_g$ the set of 
 prime numbers $p$ such that there exists a place $w$ above $p$ with $w \in \Omega_-$.

\end{defn}

\begin{prop}[{{\cite[Proposition~2.9]{B 25}}}]\label{odd ramification} Let $g \in \mathbf{Z}[x]$ be a monic, irreducible, symmetric polynomial of even degree, and let $p$ be an odd prime number.
Then, both of the following hold:
\begin{enumerate}
    \item[{\rm (a)}] if $v_p(g(1))  \equiv 1 \mod 2$, then $p \in \Pi^{r,+}_g$,
    \item[{\rm (b)}] if $v_p(g(-1))  \equiv 1 \mod2$, then $p \in \Pi^{r,-}_g$.
\end{enumerate}

\end{prop}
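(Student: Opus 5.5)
We set $E = E_g$, $E_0 = (E_g)_0$ and $\alpha$ the image of $x$, so that $g$ is the minimal polynomial of $\alpha$ over $\mathbf{Q}$ and $\sigma_g(\alpha)=\alpha^{-1}$. The strategy is to show that an odd valuation of $g(1)$ at $p$ forces the extension $E/E_0$ to be ramified at some place $w$ above $p$, and that at this place $\overline{\alpha}=1$ in $\kappa_w$. Part (b) follows by the same argument with $-\alpha$, or equivalently with the polynomial $g(-x)$, which is again monic, irreducible and symmetric of even degree and has $g(-1)$ as its value at $1$.

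The first step is to express $g(1)$ as a norm from $E_0$. Since $g$ is symmetric of even degree $2d$, its roots come in pairs $\beta,\beta^{-1}$. Hence
\[
g(1)=\prod_{\text{pairs}}(1-\beta)(1-\beta^{-1}) = \prod_{\text{pairs}}(2-\beta-\beta^{-1}),
\]
which gives $g(1) = N_{E_0/\mathbf{Q}}(2-\alpha-\alpha^{-1})$. Set $\mu := 2-\alpha-\alpha^{-1}\in E_0$. Since $\mu=-\alpha^{-1}(\alpha-1)^2$ and $\alpha$ is a unit, we also have $\mu = N_{E/E_0}(1-\alpha)$ up to sign; indeed $(1-\alpha)(1-\alpha^{-1})=\mu$ exactly. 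Thus $\mu = N_{E/E_0}(1-\alpha)$. Writing $v_p(g(1))=\sum_{w\mid p} f_w\, w(\mu)$ over the places $w$ of $E_0$ above $p$, the oddness of $v_p(g(1))$ gives a place $w$ with $f_w\,w(\mu)$ odd, and in particular $w(\mu)$ odd.

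The second step deduces ramification at this $w$. If $w$ were unramified in $E$ (inert or split), every element of $N_{E/E_0}(E^\times)$ would have even $w$-valuation in the inert case. In the split case the norm valuations are arbitrary, so the argument needs more care. Here we use $\mu = -\alpha^{-1}(1-\alpha)^2$ in $E$. If $w$ splits into $w_1,w_2$ in $E$, swapped by $\sigma$, then $w(\mu)=w_1(\mu)=2w_1(1-\alpha)$, since $\alpha$ is a unit and $w_1$ restricted to $E_0$ equals $w$. This is even, a contradiction. In the inert case, $w'(\mu)=2w'(1-\alpha)$ as well, with $w'|_{E_0}=w$ and ramification index $1$, so again $w(\mu)$ is even. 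Hence $w$ is ramified, and then $w(\mu)=\tfrac12 w'(\mu)=w'(1-\alpha)$, which is odd and in particular positive.

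The final step identifies the residue. Since $w'(1-\alpha)>0$, we get $\overline{\alpha}=1$ in $\kappa_{w'}=\kappa_w$, where the two residue fields coincide because $w$ is ramified. Therefore $w\in\Omega_+$ and $p\in\Pi^{r,+}_g$. The main point to get right is the valuation bookkeeping between $E$ and $E_0$ in the second step, namely the normalization of $w'$ relative to $w$. It is also worth noting that $p$ odd is used only because Definition~\ref{ramified} restricts $\Omega_\pm$ to odd $p$; the argument itself does not otherwise use it.
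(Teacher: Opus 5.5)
\paragraph{Verdict.} Your proof is correct. The paper gives no argument of its own for this statement: it is quoted from [B 25, Proposition 2.9].

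\paragraph{Comparison with the paper.} In the $p=3$ step of the OG10 local existence proof, the paper later uses the sharper parity congruence from [BT, Lemma 6.8],
\[
\sum_{w\in\Omega_+}[\kappa_w:\mathbf{F}_p]\equiv v_p(g(1)) \pmod 2 ,
\]
from which (a) is immediate. Part (b) follows likewise with $\Omega_-$ and $g(-1)$.

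Your route is a direct, self-contained valuation computation. You write $g(1)=N_{E_0/\mathbf{Q}}(\mu)$ with $\mu=(1-\alpha)(1-\alpha^{-1})=-\alpha^{-1}(1-\alpha)^2$, which is a unit times a square in $E$. Hence every place of $E_0$ above $p$ that is unramified in $E$ contributes an even amount to $v_p(g(1))$. An odd $v_p(g(1))$ therefore forces a ramified $w$ with $w'(1-\alpha)=w(\mu)>0$, i.e.\ $\overline{\alpha}=1$ in $\kappa_{w'}=\kappa_w$. This gives exactly the existence statement and nothing more. The reduction of (b) to (a) via $g(-x)$ is also fine: $g(-x)$ is again monic, irreducible and symmetric of the same even degree, and $\alpha\mapsto-\alpha$ is compatible with the involutions and interchanges $\Omega_+$ and $\Omega_-$.

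\paragraph{Extending to the full congruence.} With two more observations your computation recovers the congruence itself, and this is where the oddness of $p$ genuinely enters.
\begin{enumerate}
\item At a ramified $w$ with $\overline{\alpha}=-1$, the element $1-\alpha$ reduces to $2\neq 0$, so it is a unit and $w(\mu)=0$.
\item At $w\in\Omega_+$ the valuation $w'(1-\alpha)$ is odd. Suppose it were $2k$, and let $\pi_0$ be a uniformizer of $E_{0,w}$. Then $u=(1-\alpha)/\pi_0^k$ is a unit with $\sigma(u)=-\alpha^{-1}u$. Reducing modulo $\mathfrak{m}_{w'}$, where $\sigma$ acts trivially on the residue field because $w$ is ramified, gives $\overline{u}=-\overline{u}$. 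This is impossible in odd characteristic.
\end{enumerate}
Summing $f_w\,w(\mu)$ over all $w\mid p$ then yields exactly the displayed congruence. So your existence argument is the lighter-weight half of the [BT] parity formula; the paper needs the full formula (to find a place in $\Omega_+$ of odd residue degree).
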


The following two lemmas are direct consequences of \cite[Proposition~2.9]{B 25}.

\begin{lemma}\label{p odd}
    Let $p\neq 2$ be an odd prime number and let $g\in\mathbf{Z}[x]$ be a monic, irreducible, symmetric polynomial of even degree. If $v_p(g(1))$ and $v_p(g(-1))$ are both even, then there exists an even, unimodular $\mathbf{Z}_p$-lattice having a semisimple isometry with characteristic polynomial $g$.
\end{lemma}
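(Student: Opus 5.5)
The plan is to build the lattice directly from the field $E=E_g\otimes\mathbf{Q}_p$ with multiplication by $\alpha$, and then to use the residue criterion recalled above: a bounded symmetric $\mathbf{Q}_p[\Gamma]$-bilinear form contains a unimodular lattice if and only if its Witt class lies in $\ker\partial$ \cite[Theorem~4.3(iv)]{BT}. Since $g$ is irreducible and symmetric of even degree, $\sigma_g$ is nontrivial. Over $\mathbf{Q}_p$ the algebra $E$ splits as a product $\prod_v E_v$ over the places $v$ of $(E_g)_0$ above $p$, and each factor is of one of three kinds: a split factor $F\times F$ with the swap involution, an inert quadratic extension, or a ramified quadratic extension. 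Split factors give hyperbolic forms, which are metabolic, contain unimodular lattices, and contribute nothing to $\partial$. So it suffices to choose, for each non-split place $v$, a class $[\lambda_v]\in T(E_v,\sigma)$ with $\partial(t_1(\lambda_v))=0$, where the trace form $\ell$ is taken to be the $\mathbf{Q}_p$-trace composed with a suitable scaling. Then $\bigoplus_v t_1(\lambda_v)$ is a bounded form with isometry $\alpha$ of characteristic polynomial $g$, and its class lies in $\ker\partial$.

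Next I treat the places one at a time using Proposition~\ref{residue}. For an inert place, part (a) says that $\partial\circ t_1$ is bijective onto $W_\Gamma(k)$, so some $\lambda_v$ has zero residue. For a ramified place, since $p$ is odd, part (b) says that the image of $\partial\circ t_1$ consists exactly of the classes of dimension $[\kappa_w:k]\bmod 2$. The zero class has dimension $0$, so a vanishing residue is possible if and only if $[\kappa_w:\mathbf{F}_p]$ is even. The main obstacle is therefore to show the following: when $v_p(g(1))$ and $v_p(g(-1))$ are both even, every ramified place $w$ has $[\kappa_w:\mathbf{F}_p]$ even.

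To handle this, I use that at a ramified place $\bar\alpha=\sigma(\bar\alpha)=\bar\alpha^{-1}$, so $\bar\alpha=\pm1$. Thus $w\in\Omega_+\cup\Omega_-$ and the residue of $w$ is a $1$-dimensional form on $\kappa_w$ with $\gamma$ acting by $\pm1$. I would refine Proposition~\ref{odd ramification}, which is \cite[Proposition~2.9]{B 25}, to the statement that
\[
v_p(g(\pm1))\equiv\sum_{w\in\Omega_\pm}[\kappa_w:\mathbf{F}_p]\pmod 2 .
\]
This should follow because $g(\pm1)=\pm N_{E/\mathbf{Q}_p}(\alpha\mp1)$ up to sign. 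At a ramified $w\in\Omega_\pm$, the element $\alpha\mp1$ has odd valuation in $E_w$: it is anti-invariant up to a unit, since $\sigma(\alpha-1)=-\alpha^{-1}(\alpha-1)$. Hence its norm contributes $[\kappa_w:\mathbf{F}_p]$ times an odd number. At all other places $\alpha\mp1$ contributes even valuation, because of the inert or split norm structure.

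Even with this congruence, the sum over $\Omega_\pm$ being even does not force each individual $[\kappa_w:\mathbf{F}_p]$ to be even. So instead of requiring each residue to vanish separately, I allow nonzero residues and cancel them. The residues from the places in $\Omega_+$ are all forms on which $\gamma$ acts by $1$, so they live in the summand $W(\mathbf{F}_p)$ of $W_\Gamma(\mathbf{F}_p)$. By part (b), each such residue can be chosen to be any class of dimension $[\kappa_w:\mathbf{F}_p]\bmod 2$, and the same holds for $\Omega_-$. Since $W(\mathbf{F}_p)$ is detected by dimension mod $2$ together with discriminant, and the total dimension is even by the congruence, I can pick the local classes so that they sum to zero in each of the two summands. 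This realizes a class in $\ker\partial$. Finally, evenness of the resulting unimodular $\mathbf{Z}_p$-lattice is automatic because $p$ is odd.
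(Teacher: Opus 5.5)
Your proof is correct. The paper does not argue this lemma at all; it only says it is a direct consequence of \cite[Proposition~2.9]{B 25}. You instead reconstruct the result from the residue machinery of Section~\ref{residuemaps}, in four steps:
\begin{enumerate}
\item decompose $E_g\otimes\mathbf{Q}_p$ over the places of $(E_g)_0$;
\item kill the split and inert residues, via Proposition~\ref{residue}(a), or \cite[Proposition~6.4]{BT} as the paper uses it;
\item prove the parity congruence $v_p(g(\pm1))\equiv\sum_{w\in\Omega_\pm}[\kappa_w:\mathbf{F}_p]\pmod 2$, which is \cite[Lemma~6.8]{BT} and which you re-derive from $g(\pm1)=\pm N(\alpha\mp1)$;
\item cancel the residues of the ramified places inside the two copies of $W(\mathbf{F}_p)$, using that each residue can be either class of its dimension parity. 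This is the content of \cite[Proposition~6.6]{BT}.
\end{enumerate}

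This is exactly the mechanism the paper itself runs for $p=3$ in the proof of Proposition~\ref{thm:sufficient part og10 - local existence}. The only difference is that there the total residue is steered to a nonzero one-dimensional class $\delta$, producing an almost unimodular lattice, whereas you steer it to $0$. What your route buys is self-containedness: it makes visible that parity of $v_p(g(\pm1))$ is precisely what allows the odd-degree ramified residues in each sign component to be paired off. The citation buys brevity.

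Three small points.
\begin{itemize}
\item Your first formulation of the ``main obstacle'' is false in general: it is not true that every ramified $w$ has $[\kappa_w:\mathbf{F}_p]$ even. You rightly abandon it, so it should simply be deleted.
\item In the valuation step, ``anti-invariant up to a unit'' is not itself the point, since $\sigma(x)/x$ is a unit for every $x\neq 0$. What forces $v_w(\alpha\mp1)$ to be odd for $w\in\Omega_\pm$ is the following. The ratio $\sigma(\alpha\mp1)/(\alpha\mp1)$ equals $\mp\alpha^{-1}$, which is $\equiv-1$ modulo $\mathfrak{m}_w$. On the other hand, writing $\alpha\mp1=u\pi^k$ with $\sigma(\pi)=-\pi$ (possible since $p$ is odd), the same ratio is $\equiv(-1)^k$, because $\sigma$ acts trivially on the residue field of a ramified place.
\item The discriminant adjustment in the cancellation step needs some place of $\Omega_\pm$ to switch between its two classes. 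This is available whenever $\Omega_\pm\neq\emptyset$, and otherwise there is nothing to cancel. Worth one sentence.
\end{itemize}
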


\begin{lemma}\label{p even}
    Let $p = 2$. Let $g\in\mathbf{Z}[x]$ be a monic, irreducible, symmetric polynomial of even degree $2n\geq 2$. If $v_p(g(1))$ and $v_p(g(-1))$ are both even and if the class of $(-1)^ng(1)g(-1)$ in $\mathbf{Q}^\times/(\mathbf{Q}^\times)^2$ lies in $\{1, -3\}$, then there exists an even, unimodular $\mathbf{Z}_2$-lattice having a semisimple isometry with characteristic polynomial $g$.
\end{lemma}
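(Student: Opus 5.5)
The plan is to realize the isometry as multiplication by $\alpha$ on a twisted trace form over $E=E_g$, with $E_0=(E_g)_0$. Every $\lambda\in E_0^\times$ gives the symmetric $\mathbf{Q}_2$-bilinear form $q_\lambda(x,y)=\operatorname{Tr}_{E/\mathbf{Q}}(\lambda x\sigma(y))$ on $E\otimes\mathbf{Q}_2$. Multiplication by $\alpha$ is an isometry of $q_\lambda$ with characteristic (and minimal) polynomial $g$, and it is semisimple since $g$ is irreducible, hence separable. So it suffices to find $\lambda$ for which $(E\otimes\mathbf{Q}_2,q_\lambda,\alpha)$ contains an even unimodular $\alpha$-stable $\mathbf{Z}_2$-lattice. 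By \cite[Theorem~4.3(iv)]{BT}, this amounts to two conditions: the form must be bounded (automatic, since $\alpha$ and $\alpha^{-1}$ are integral), and its class must lie in $\ker\partial$. Evenness needs separate care, because at $p=2$ unimodularity does not imply evenness.

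Write $E\otimes\mathbf{Q}_2=\prod_v E_v$, where $v$ runs over the places of $E_0$ above $2$. The Witt class then splits as a sum of local contributions. Places $v$ that split in $E$ give hyperbolic, metabolic pieces with trivial residue. For the remaining places we use Proposition~\ref{residue}. At inert places, part (a) lets us adjust the local component $\lambda_v$ freely until the residue is any prescribed class, in particular $0$. At ramified places, part (c) says the residue is constant in $\lambda_v$, so these places cannot be adjusted and must be handled through the hypothesis.

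The main step, and the expected obstacle, is to show that the total residue of the ramified places, which is independent of $\lambda$, can be cancelled by the inert contributions, and that the resulting unimodular lattice can be taken even. I would argue as follows. The residue lies in $W_\Gamma(\mathbf{F}_2)$, and its components at $x\pm1$ are governed by the local behaviour of $g(\pm1)$. Since $v_2(g(1))$ and $v_2(g(-1))$ are even, these components have trivial dimension parity, so nothing unremovable sits at the $(x-1)$- and $(x+1)$-parts. The determinant formula \eqref{eq:det formula} gives $\det q_\lambda = g(1)g(-1)$ modulo squares. An even unimodular $\mathbf{Z}_2$-lattice of rank $2n$ has discriminant $(-1)^n d$ congruent to $1$ or $-3$ modulo squares in $\mathbf{Q}_2$ (the cases $U^{\oplus n}$ and $U^{\oplus n-1}\oplus V$, where $V$ is the anisotropic even binary form). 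The hypothesis that $(-1)^ng(1)g(-1)\in\{1,-3\}$ is exactly this discriminant condition. Given the discriminant, the remaining invariant of the quadratic space, its Hasse invariant, can be changed by modifying $\lambda$ at an inert or ramified place through a non-norm. This lets us match the quadratic space of the desired even unimodular lattice.

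Once residues vanish, we obtain an $\alpha$-stable unimodular lattice $L$. If $L$ is odd, I would pass to its even sublattice $L_0=\{x:(x,x)\in2\mathbf{Z}_2\}$, which is $\alpha$-stable. Then I would use $A(L_0)$ together with Proposition~\ref{thm:corr even overlat and isotrop subgroup} to look for an $\alpha$-stable even unimodular overlattice. The evenness of the valuations of $g(\pm1)$ should ensure that the action of $\alpha$ on $A(L_0)$ has no fixed obstruction. Finally, the $\{1,-3\}$ condition guarantees that the correct isotropic subgroup exists.
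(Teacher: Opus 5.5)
Your skeleton (trace forms $q_\lambda$, killing $\partial$ place by place, matching the quadratic space, then repairing parity) is sensible. However, the step you flag as the obstacle is only asserted (``should ensure'', ``guarantees''). That step is precisely the $p=2$ content of the lemma, which the paper does not reprove but takes from \cite[Proposition~2.9]{B 25}.

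Concretely, let $L$ be an odd $\alpha$-stable unimodular lattice. Then $A(L_0)$ has order $4$. By Proposition~\ref{thm:corr even overlat and isotrop subgroup}, an even unimodular overlattice exists only if $A(L_0)$ is the hyperbolic form $u$ on $(\mathbf{Z}/2)^2$ (the discriminant form of $U(2)$). In that case there are exactly two such overlattices, and $A(\alpha)$, which fixes the class of $L/L_0$, either fixes both or swaps them. So you must show (i) $A(L_0)\cong u$ and (ii) that $A(\alpha)$ does not swap.

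Point (i) does not follow from the $\{1,-3\}$ condition, as you claim. For example, $\langle 1,1,1,1\rangle$ over $\mathbf{Z}_2$ has determinant $1$, yet its even sublattice $D_4\otimes\mathbf{Z}_2$ has anisotropic discriminant form. What you need is that $V$ has the Hasse invariant of an even unimodular lattice; a Jordan decomposition check then gives $A(L_0)\cong u$.

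Point (ii) needs a real argument. For instance, suppose $A(\alpha)$ swapped. Glue $(L_0,\alpha)$ to $U(-2)$ with the swap $e\leftrightarrow f$ to get an even unimodular $M$ with an isometry $\Phi$ of characteristic polynomial $g(x)(x^2-1)$. For $M_1=\mathbf{Z}_2(e+f)$, the pairing $(x,(1-\Phi)^{-1}y)$ is a nondegenerate alternating form on $M_1^\perp/(1-\Phi)(M_1^\perp)^\vee$. This group has order $2^{v_2(g(1))-1}$, so $v_2(g(1))$ would be odd. This is where the \emph{separate} evenness of $v_2(g(\pm1))$ is used, and your text supplies nothing in its place.

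The earlier steps also need repair. Over $\mathbf{F}_2$ one has $x-1=x+1$, and at every ramified place $\bar\alpha=\bar\alpha^{-1}$ forces $\bar\alpha=1$. So all ramified residues lie in one component $W(\mathbf{F}_2)\cong\mathbf{Z}/2$, and Proposition~\ref{residue}(c) only says they are constant. To see that they cancel you need a parity count. Split and inert places contribute even $2$-adic valuation to $\det V$, so the ramified part has dimension $\equiv v_2(g(1)g(-1))\pmod 2$. Only the sum of the two valuations is used here, which confirms that the separate parities must enter in the evenness step.

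Finally, the Hasse invariant cannot be adjusted at an arbitrary inert place, since by Proposition~\ref{residue}(a) changing $\lambda_v$ there changes $\partial$ and undoes the vanishing you just arranged. The adjustment has to be made where the residue is insensitive to $\lambda_v$ (e.g.\ a ramified place), and the case where no such place exists needs its own argument.
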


Given a $\mathbf{Z}$-lattice $L$, we define the genus of $L$ to be the collection of all $\mathbf{Z}$-lattices $L'$ such that $L$ and $L'$ have the same signature and such that $L\otimes \mathbf{Z}_p\cong L'\otimes\mathbf{Z}_p$ for every prime number $p$.

\begin{prop}\label{thm:local to global irreducible polynomials}
    Let $L$ be an even $\mathbf{Z}$-lattice and let $g\in\mathbf{Z}[x]$ be a monic, irreducible, symmetric polynomial such that $L\otimes\mathbf{R}$ has an isometry with characteristic polynomial $g$. 
    Then the following are equivalent:
    \begin{enumerate}
        \item There exists a lattice $L'$ in the genus of $L$ admitting an isometry with characteristic polynomial $g$,
        \item For every prime number $p$, the $\mathbf{Z}_p$-lattice $L\otimes \mathbf{Z}_p$ admits an isometry with characteristic polynomial $g$.
    \end{enumerate}
\end{prop}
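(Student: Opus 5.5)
(1) implies (2) is immediate: if $L'$ lies in the genus of $L$ and has an isometry $\varphi'$ with characteristic polynomial $g$, then $L\otimes\mathbf{Z}_p\cong L'\otimes\mathbf{Z}_p$ for every prime $p$, and $\varphi'\otimes\mathbf{Z}_p$ transported along this isomorphism is an isometry of $L\otimes\mathbf{Z}_p$ with characteristic polynomial $g$. The substance is (2) implies (1), and my plan is to reduce it to the Hasse principle for the equivariant Witt groups in the style of \cite{BT} and \cite{B 24}, using that $g$ is irreducible.

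First I will pass to the rational level. Since $g$ is irreducible and symmetric, and the trivial cases $g=x\pm1$ can be set aside (then $\deg g=1$ and the statement is straightforward), $g$ has even degree. Hence any $\mathbf{Q}$-vector space with a semisimple isometry of characteristic polynomial $g$ is a one-dimensional $E_g$-vector space, and the quadratic forms admitting such an isometry are exactly those of the form $q_\lambda(x,y)=\ell(\lambda x\sigma_g(y))$ for $\lambda\in(E_g)_0^\times$, where $\ell$ is the trace. The local hypothesis gives, for each $p$, a class $\lambda_p\in T(E_g\otimes\mathbf{Q}_p,\sigma_g)$ such that $q_{\lambda_p}$ over $\mathbf{Q}_p$ contains an $\alpha$-stable lattice isometric to $L\otimes\mathbf{Z}_p$. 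At the real place, $L\otimes\mathbf{R}$ gives a class $\lambda_\infty$. For all but finitely many $p$, namely those where $g$ is unramified and $p$ does not divide $2\det L$, we may take $\lambda_p$ to be a unit, so $\lambda_p$ is trivial in $T$.

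The main step is to find a global $\lambda\in(E_g)_0^\times$ whose image in each completion equals $\lambda_v$ modulo norms. By the Hasse norm theorem for the quadratic extension $E_g/(E_g)_0$, the obstruction to doing this is the usual reciprocity condition that the product of the local Hilbert symbols be trivial. I expect this to be the \textbf{main obstacle}. I would handle it by comparing determinants and Hasse invariants. The family $(\lambda_v)$ realizes the local forms $L\otimes\mathbf{Q}_v$, which come from the global form $L\otimes\mathbf{Q}$. So the product over all $v$ of the Hasse invariants of $q_{\lambda_v}$ equals $1$, and the determinants agree by the determinant formula (\ref{eq:det formula}). Since, for fixed $\alpha$, the Hasse invariant of $q_\lambda$ changes by the local norm-residue symbol of $\lambda_v$, the reciprocity condition should follow, as in \cite[Section~5]{B 24}. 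If some $\lambda_v$ are not determined uniquely, I would use the freedom at places where $T$ has several elements that yield isometric lattices to fix the parity.

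Once $\lambda$ is chosen, the form $(E_g,q_\lambda)$ is locally isometric to $L\otimes\mathbf{Q}_p$ at every $p$ and at $\infty$. By Hasse--Minkowski it is therefore isometric to $L\otimes\mathbf{Q}$, and multiplication by $\alpha$ is an isometry with characteristic polynomial $g$. It remains to glue the local $\alpha$-stable lattices $M_p\subset E_g\otimes\mathbf{Q}_p$ with $M_p\cong L\otimes\mathbf{Z}_p$. I take $M_p$ to be the standard $\mathbf{Z}_p[\alpha]$-lattice $\mathcal{O}_{E_g}\otimes\mathbf{Z}_p$ for almost all $p$. Then $L':=\bigcap_p\bigl(M_p\cap E_g\bigr)$ is an $\alpha$-stable $\mathbf{Z}$-lattice with $L'\otimes\mathbf{Z}_p=M_p$ for all $p$. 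It is even, has the correct signature, and is locally isometric to $L$ everywhere, so it lies in the genus of $L$, which gives (1).
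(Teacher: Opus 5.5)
Your proposal is correct and has the same overall structure as the paper's proof. For (2)$\Rightarrow$(1) you first produce an isometry of $L\otimes\mathbf{Q}$ with characteristic polynomial $g$ whose completion at each $p$ stabilizes a lattice isometric to $L\otimes\mathbf{Z}_p$. You then glue these local lattices by the O'Meara intersection, exactly as in \eqref{eq:explicit omeara}.

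The difference is the first step. The paper gets it in one line from \cite[Theorem~9.1]{B 15}. You prove it directly in the irreducible case:
\begin{itemize}
\item you write every candidate form as $q_\lambda$;
\item you reduce to finding a global $\lambda\in(E_g)_0^\times$ with prescribed classes $\lambda_v\in T$;
\item you check the product formula $\prod_w(\lambda_w,d)_w=1$, where $E_g=(E_g)_0(\sqrt d)$, by comparing the Hasse invariants of $q_{\lambda_v}\cong L\otimes\mathbf{Q}_v$ with those of the global form $q_1$.
\end{itemize}
This works, because replacing $\lambda$ by $\lambda\mu$ changes the Hasse invariant at $v$ by $\prod_{w\mid v}(\mu_w,d)_w$, also at the real places. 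It is more self-contained than the citation. It also shows why irreducibility kills the obstruction: there is a single parameter $\lambda$, so reciprocity for the global form $L\otimes\mathbf{Q}$ controls the only parity condition. In particular, every admissible local family $(\lambda_v)$ satisfies reciprocity automatically, so your fallback of adjusting some $\lambda_v$ ``to fix the parity'' is never needed.

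Three small corrections:
\begin{itemize}
\item The Hasse norm theorem only gives uniqueness of the class of $\lambda$. What produces $\lambda$ from a family satisfying the product formula is the existence statement accompanying Hilbert reciprocity (class field theory), for instance in \cite{OM}.
\item To justify $\lambda_p=1$ at almost all $p$, use primes $p\nmid 2\det(L)\operatorname{disc}(E_g)$. Only there is $\mathcal{O}_{E_g}\otimes\mathbf{Z}_p$ unimodular for $q_1$, hence isometric to $L\otimes\mathbf{Z}_p$ by the determinant formula. Your list should therefore also exclude primes dividing $\operatorname{disc}(E_g)$, not only those ramified in $E_g/(E_g)_0$. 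This is harmless, since the exceptional set stays finite.
\item When gluing, say explicitly that the local lattices are transported into $(E_g\otimes\mathbf{Q}_p,q_\lambda)$ via multiplication by $u$, where $\lambda_p=\lambda u\sigma_g(u)$.
\end{itemize}
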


\begin{proof}
    The fact that (1) implies (2) follows from the definition of the genus of $L$, i.e. that $L'\otimes \mathbf{Z}_p\cong L\otimes \mathbf{Z}_p$ for every prime number $p$ if $L$ and $L'$ are in the same genus.

    To prove that (2) implies (1), we proceed as in the proof of \cite[Theorem 12.1]{B 25}. Since $g$ is irreducible, our assumptions together with \cite[Theorem 9.1]{B 15} imply that the quadratic space $L\otimes \mathbf{Q}$ has an isometry $\varphi$ with characteristic polynomial $g$ such that for every prime number $p$, the quadratic space $L\otimes \mathbf{Q}_p$ contains an even $(\varphi\otimes \id)$-stable $\mathbf{Z}_p$-lattice $L_p\cong L\otimes\mathbf{Z}_p$.
    Let $M\subset L\otimes \mathbf{Q}$ be a $\varphi$-stable $\mathbf{Z}$-lattice. Note that for all but finitely prime numbers $p$, the $(\varphi\otimes\id)$-stable $\mathbf{Z}_p$-lattices $L_p$ and $M\otimes \mathbf{Z}_p$ are isometric. Hence we may choose $L_p$ to be equal to $M\otimes\mathbf{Z}_p$, for all but finitely many prime numbers. Thus, according to \cite[81:14]{OM}, there exists a $\mathbf{Z}$-lattice $L'$ in $L\otimes \mathbf{Q}$ such that $L'\otimes\mathbf{Z}_p = L_p$ for \emph{all} prime numbers $p$. More precisely,
\begin{equation}\label{eq:explicit omeara}
    L'=\bigcap_{p\text{ prime}}\iota_p^{-1}(\iota_p(L\otimes\mathbf{Q})\cap L_p)\subset L\otimes\mathbf{Q},
\end{equation}
where for every prime number $p$, we let $\iota_p\colon L\otimes\mathbf{Q}\hookrightarrow L\otimes\mathbf{Q}_p$ be the natural inclusion. In particular, the lattice $L'$ is $\varphi$-stable by \eqref{eq:explicit omeara}. Moreover for all prime numbers $p$, the $\mathbf{Z}_p$-lattices $L'\otimes\mathbf{Z}_p$ and $L\otimes\mathbf{Z}_p$ are both isometric to $L_p$. Since $L\otimes \mathbf{Q} = L'\otimes\mathbf{Q}$ by construction, the previous statement is equivalent to saying that $L'$ lies in the genus of $L$. 

\end{proof}

\section{Isometries of lattices of type {\rm OG10}}

Throughout this section, we let $L := \Lambda_{\rm OG10} :=  U^{\oplus3}\oplus E_8^{\oplus2}\oplus A_2$, and we let $S$ be a Salem polynomial of degree 24. We shall prove the following:

\begin{theo}\label{thm:salem for og10} 
The lattice $L$ has an isometry $\varphi$ with characteristic polynomial $S$ if and only if $|3S(1)S(-1)|$ is a square, and either
$|S(1)|$ is a square, or $|S(-1)|$ is a square.
\end{theo}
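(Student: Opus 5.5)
The plan is to split the problem into necessity and sufficiency, and to reduce sufficiency to local conditions via Proposition~\ref{thm:local to global irreducible polynomials}. The signature condition at $\mathbf{R}$ is automatic: a Salem polynomial of degree $24$ has exactly two real roots off the unit circle and $22$ roots on it. So $L\otimes\mathbf{R}$, of signature $(3,21)$, has an isometry with characteristic polynomial $S$, obtained by putting a hyperbolic plane on the real pair and assigning signs suitably to the $11$ rotation planes. Moreover, $L$ is indefinite of rank $\geq 3$ and its discriminant group $A(L)\cong \mathbf{Z}/3$ is small, so by Nikulin/Eichler its genus has a single class. Hence $L$ has such an isometry if and only if $L\otimes\mathbf{Z}_p$ does for every prime $p$.

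\emph{Necessity.} Suppose $\varphi\in O(L)$ has characteristic polynomial $S$. The determinant formula over $\mathbf{Q}$ gives $\det L = -3 \equiv S(1)S(-1)$ modulo squares. The sign is fixed by $S(1)<0<S(-1)$ for a Salem polynomial of even degree. Hence $|3S(1)S(-1)|$ is a square. For the second condition, embed $L$ primitively into the even unimodular lattice $N:=L\oplus A_2(-1)$, which is unimodular by Lemma~\ref{lem:stable even overlattices}-type gluing, or more directly $N \cong U^{\oplus 3}\oplus E_8^{\oplus 2}\oplus U\oplus \dots$ after gluing $A_2$ with $A_2(-1)$. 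The isometry $A(\varphi)$ acts on $\mathbf{Z}/3$ as $\pm1$. Since $O(A_2(-1))$ surjects onto $O(A(A_2(-1)))$, we may extend $\varphi\oplus\psi$ to $N$ with $\psi$ of characteristic polynomial $(x-\epsilon)^2$ or $x^2\pm x+1$ as appropriate. A cleaner route works $3$-adically: over $\mathbf{Z}_3$, $L\otimes\mathbf{Z}_3\cong$ (unimodular) $\oplus\langle\pm 3\rangle$. The isometry acts on $A(L)$ by $\epsilon=\pm1$, and $S(\epsilon)\equiv 0 \bmod 3$ must hold, since $\varphi-\epsilon$ is not invertible modulo $3$ on $L^\vee/3L^\vee$-type reductions. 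Then, applying the determinant formula to the unimodular "sublattice" $\ker$-argument, I expect $v_3(S(-\epsilon))$ to be even and, combined with the $2$-adic and odd-$p$ analysis via Proposition~\ref{odd ramification}, $|S(-\epsilon)|$ to be a square. Concretely, each odd $p\neq 3$ dividing $S(\pm1)$ to odd power is ramified, and unimodularity of $L\otimes\mathbf{Z}_p$ forces the residue computation of Proposition~\ref{residue}(b) to give even valuations. Hence, at primes $p\ne 3$, both $v_p(S(1))$ and $v_p(S(-1))$ are even, while at $3$ exactly one of $v_3(S(\pm1))$ is odd. This yields the "either/or" condition, given $|3S(1)S(-1)|$ a square and the $2$-adic parity.

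\emph{Sufficiency.} Assume the conditions hold. For odd $p\neq3$, both valuations are even, so Lemma~\ref{p odd} gives an isometry of the unique even unimodular $\mathbf{Z}_p$-lattice of rank $24$ and determinant $-3$, which is $L\otimes\mathbf{Z}_p$. For $p=2$, $(-1)^{12}S(1)S(-1)=-3$ modulo squares, so Lemma~\ref{p even} applies, and again uniqueness of the even unimodular $\mathbf{Z}_2$-lattice with given rank and determinant class identifies it with $L\otimes\mathbf{Z}_2$. The main obstacle is $p=3$, where $L\otimes\mathbf{Z}_3$ is not unimodular. There I would write $L\otimes\mathbf{Z}_3\cong M\oplus\langle u\cdot 3\rangle$ with $M$ unimodular of rank $23$. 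The plan is to construct the isometry via $\mathbf{Z}_3[\Gamma]$-forms over $E_S\otimes\mathbf{Q}_3$, using Proposition~\ref{residue} to choose the hermitian parameter $\lambda$ at the ramified place(s) above $3$ over $\Omega_{\epsilon}$, where $\epsilon$ is the sign for which $v_3(S(\epsilon))$ is odd. The aim is for the residue to equal the class of the $1$-dimensional form on $A(L)$ with $\gamma$ acting by $\epsilon$, which has the right dimension parity. Equivalently, I would glue: find an even unimodular $\mathbf{Z}_3$-lattice of rank $26$ with isometry of characteristic polynomial $S\cdot(x^2+\epsilon x+1)$, and then recover $L\otimes\mathbf{Z}_3$ as the kernel of $S(\varphi)$ via Lemma~\ref{lem:stable even overlattices}. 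Checking that the resulting determinant and Hasse invariant match those of $L\otimes\mathbf{Z}_3$ is the delicate computation. Finally, Proposition~\ref{thm:local to global irreducible polynomials} together with the one-class genus finishes the proof.
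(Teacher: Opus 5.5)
Your architecture for sufficiency matches the paper's: one class in the genus plus Proposition~\ref{thm:local to global irreducible polynomials}, Lemmas~\ref{p odd} and \ref{p even} with uniqueness of even unimodular $\mathbf{Z}_p$-lattices for $p\neq 3$, and residue maps at $p=3$. Your derivation of ``$|3S(1)S(-1)|$ is a square'' from the determinant formula is also correct. Two essential steps, however, are never actually carried out.

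\emph{Necessity of the either/or clause.} Once $|3S(1)S(-1)|$ is a square, this clause is equivalent to $v_p(S(1))$ and $v_p(S(-1))$ being even for every $p\neq 3$. So you must exclude values such as $|S(1)|=2$, $|S(-1)|=6$, which is a purely $2$-adic question. You assume ``the $2$-adic parity'', and the tools you name cannot supply it. Proposition~\ref{odd ramification} and the dimension count in Proposition~\ref{residue}(b) concern odd residue characteristic, and by Proposition~\ref{residue}(c) the residue carries no information at ramified dyadic places. At $2$ it is evenness, not unimodularity, that forces the parity: $x^2+1$ acts on the odd unimodular $\langle 1\rangle\oplus\langle 1\rangle$ with $F(1)=2$. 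Your $3$-adic remarks ($S(\epsilon)\equiv 0 \bmod 3$, the ``expected'' parity of $v_3(S(-\epsilon))$) are neither proved nor needed.

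The gluing route you started and abandoned closes the gap, and it is the paper's proof. Extend $\varphi\oplus\psi$ to the even unimodular index-$3$ overlattice of $L\oplus A_2(-1)$ (the direct sum itself is not unimodular), with $\psi$ acting on the discriminant by the same sign as $\varphi$. Then apply the square conditions of \cite[Theorem~6.1]{GM}. The paper takes $\psi$ of order $3$ (resp.\ $6$) and reads off that $|S(-1)|$ (resp.\ $|S(1)|$) is a square. Your choice $\psi=\epsilon\,\id$ also works, since then $F(-\epsilon)=4S(-\epsilon)$ (use \cite[Theorem~1(a)]{B 21}, as in the OG6 case). Alternatively, localize Gross--McMullen directly. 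For $p\neq 3$, the pairing $\langle x,(1-\varphi)^{-1}y\rangle \bmod \mathbf{Z}_p$ on $(L\otimes\mathbf{Z}_p)/(1-\varphi)(L\otimes\mathbf{Z}_p)$ is nondegenerate and alternating, since $b(x,x)=\tfrac12\langle x,x\rangle$. This forces $v_p(S(1))$ to be even; use $-\varphi$ for $S(-1)$.

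\emph{Local existence at $p=3$.} You name the right target: total residue equal to the one-dimensional class $\delta$ of the discriminant form of $L\otimes\mathbf{Z}_3$, with $\Gamma$ acting by $\epsilon$. But you leave its realization and the identification as ``the delicate computation'', and this is the heart of sufficiency. The steps needed are these.
\begin{enumerate}
\item By \cite[Lemma~6.8]{BT}, $\sum_{v\in\Omega_\epsilon}[\kappa_v:\mathbf{F}_3]\equiv v_3(S(\epsilon))\equiv 1 \bmod 2$, so some $v_0\in\Omega_\epsilon$ has odd residue degree. Proposition~\ref{residue}(b) is injective onto the odd-dimensional classes, so it gives $\lambda_{v_0}$ with residue exactly $\delta$.
\item At unramified places take zero residue \cite[Proposition~6.4]{BT}. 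The degree sums over $\Omega_\epsilon\setminus\{v_0\}$ and $\Omega_{-\epsilon}$ are even, so the remaining residues can be made to cancel \cite[Proposition~6.6]{BT}.
\item Then \cite[Theorem~B]{BT} yields a stable almost unimodular lattice. A maximal stable even overlattice has $v_3(\det)=1$, discriminant form $\delta$, and determinant $-3$ by the determinant formula.
\end{enumerate}
Both invariants are needed: the lattices $U_0\oplus\langle 3u\rangle$ with $u\det U_0$ fixed form two classes over $\mathbf{Z}_3$, distinguished only by the discriminant form. Your rank-$26$ gluing variant does not avoid this work, since the rank-$2$ piece must still be adjusted by overlattices and its discriminant form matched.
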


\begin{remark}
    The proof of Theorem~\ref{thm:salem for og10} actually shows that if $\varphi\in O(L)$ has characteristic polynomial $S$ then $\varphi$ acts trivially on the discriminant group $A(L)$ of $L$ if and only if $|S(-1)|$ is a square (see the proof of Lemma~\ref{thm:necessary part og10} for more details).
\end{remark}

\begin{remark}\label{rem:unique up to iso - og10}
    According to \cite[Theorem 1.14.2]{N}, since the lattice $L$ is even indefinite of rank 24 and its discriminant group is cyclic, the lattice $L$ is unique in its genus, up to isometry. 
\end{remark}

We will need the following two auxiliary results to prove the two implications of Theorem \ref{thm:salem for og10}.

\begin{lemma}\label{thm:necessary part og10}
    If the lattice $L$ has an isometry with characteristic polynomial $S$, then $|3S(1)S(-1)|$ is a square, and one of $|S(1)|$ or $|S(-1)|$ is a square.
\end{lemma}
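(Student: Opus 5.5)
The plan has two parts: get the first condition from the determinant formula, and get the second from the discriminant group $A(L)\cong \mathbf{Z}/3\mathbf{Z}$ of $A_2$.

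\textbf{The determinant formula.} Let $\varphi\in O(L)$ have characteristic polynomial $S$. Since $S$ is irreducible of degree $24>1$, we have $S(1)S(-1)\neq 0$. By the determinant formula \eqref{eq:det formula} over $K=\mathbf{Q}$,
\[
\det(L)=S(1)S(-1)\in \mathbf{Q}^\times/(\mathbf{Q}^\times)^2 .
\]
The lattice $L=U^{\oplus3}\oplus E_8^{\oplus2}\oplus A_2$ has $\det(L)=(-1)^3\cdot 1\cdot 3=-3$. Hence $-3S(1)S(-1)$ is a rational square, and therefore $|3S(1)S(-1)|$ is a square. We note the signs for later: $S(1)<0$ because a Salem polynomial has exactly two real roots $\lambda^{\pm1}$ in $(0,\infty)$ and is positive at large $x$. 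Also $S(-1)>0$ when the degree is even, since there are no real roots $\le 0$.

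\textbf{The discriminant group.} Now $A(L)\cong \mathbf{Z}/3\mathbf{Z}$, with $q_L$ of value $-2/3$ or $2/3 \bmod 2$. So $A(\varphi)=\pm\id$. We treat the two cases separately and want to show:
\begin{itemize}
\item if $A(\varphi)=\id$, then $|S(-1)|$ is a square;
\item if $A(\varphi)=-\id$, then $|S(1)|$ is a square.
\end{itemize}
By the first part, $|S(1)S(-1)|$ is $3$ times a square, so it suffices in each case to show that the relevant value has even $3$-adic valuation. At primes $p\neq 3$ the lattice $L\otimes\mathbf{Z}_p$ is unimodular. The key step is a local argument at $p=3$.

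Suppose $A(\varphi)=\id$. Then $\varphi+1$ acts on $A(L)$ as multiplication by $2$, which is invertible. We would like to use an embedding $L\oplus A_2(-1)\hookrightarrow N$ into an even unimodular lattice $N$, gluing along $A(L)\cong A(A_2(-1))$ in the manner of Lemma~\ref{lem:stable even overlattices} (read in reverse). We extend $\varphi$ by $\id$ on $A_2(-1)$; this extension preserves the gluing because $A(\varphi)=\id$. This gives an isometry of an even unimodular lattice of rank $26$ with characteristic polynomial $S(x)(x-1)^2$. A more direct $3$-adic computation would also do.

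An alternative, cleaner route is to compute $\det$ of the form restricted to $(\varphi+1)$. The map $\varphi+1$ is an injective endomorphism of $L$, and $|\det(\varphi+1)|=|S(-1)|$. Consider $L'=(\varphi+1)L$. Using $q((\varphi+1)x,(\varphi+1)y)=q(x,(2+\varphi+\varphi^{-1})y)$, we get
\[
\det(L')=\det(L)\cdot S(-1)^2,
\]
which gives nothing new. Instead we use the pairing $b(x,y)=q(x,(\varphi-\varphi^{-1})y)$, or more precisely
\[
b_-(x,y)=q\bigl(x,(\varphi+1)\varphi^{-1}y\bigr)+q\bigl((\varphi+1)\varphi^{-1}x,y\bigr),
\]
which is symmetric. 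Working over $\mathbf{Z}_3$, its determinant is $\det(L)\cdot \det(\varphi+\varphi^{-1}+2)$ up to a unit. I am not sure this route yields the parity directly.

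\textbf{Main obstacle: the $3$-adic parity.} I expect the local $3$-adic parity argument to be the hardest step, and I would take the gluing approach. Under $A(\varphi)=\id$, the form $L\otimes\mathbf{Q}_3$ with $\varphi$ sits, after adding $A_2(-1)\otimes\mathbf{Z}_3$ with $\id$, in an even unimodular $\mathbf{Z}_3$-lattice. One then wants a formula relating $v_3(S(-1))$ to the unimodularity at $-1$. The Witt-group residue at $-1$ (Proposition~\ref{residue}, the $\Omega_-$ part) shows that the $(x+1)$-component of the residue of $[L\otimes\mathbf{Q}_3]$ is trivial. It also shows that nontriviality is forced when $v_3(S(-1))$ is odd (Proposition~\ref{odd ramification}(b), since $3\in\Pi^{r,-}_S$ gives a nonzero contribution to the residue at the $(-1)$-component). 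Gluing with $\id$ only affects the $(+1)$-component, so the residue at $-1$ must vanish. Hence $v_3(S(-1))$ is even, and $|S(-1)|$ is a square.

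The case $A(\varphi)=-\id$ is symmetric: glue with $-\id$ on $A_2(-1)$, and the same argument shows that $v_3(S(1))$ is even, so $|S(1)|$ is a square.
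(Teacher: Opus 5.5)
Your determinant-formula step is correct, and it gets $-3S(1)S(-1)\in(\mathbf{Q}^\times)^2$ more directly than the paper, which reads it off from Gross--McMullen. The gluing of $\varphi$ with $\pm\id$ on $A_2(-1)$ is also correct. The gap is the $3$-adic step on which the rest depends.

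You claim that if $v_3(S(-1))$ is odd, then the $(x+1)$-component of $\partial[L\otimes\mathbf{Q}_3]$ is nonzero, citing Proposition~\ref{odd ramification}(b). That proposition only gives the existence of \emph{some} ramified place $w\in\Omega_-$ above $3$. By Proposition~\ref{residue}(b), its local residue has dimension $[\kappa_w:\mathbf{F}_3]$ mod $2$, so it may well be zero. Moreover, contributions from several places in $\Omega_-$ can cancel in $W(\mathbf{F}_3)$. To make this route work you would need two further ingredients:
\begin{enumerate}
\item the parity identity $\sum_{v\in\Omega_-}[\kappa_v:\mathbf{F}_3]\equiv v_3(S(-1))\pmod 2$ of \cite[Lemma~6.8]{BT};
\item the observation that split and unramified places contribute only even-dimensional classes.
\end{enumerate}
Together these show the $(x+1)$-component has odd dimension. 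None of this is in your argument.

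Separately, your reduction to the $3$-adic valuation is not justified. Knowing that $|S(1)S(-1)|$ is $3$ times a square does not by itself give $v_p(S(\pm1))$ even for $p\neq 3$. You only allude to unimodularity of $L\otimes\mathbf{Z}_p$, without the argument showing that unimodularity forces this.

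Both problems disappear if you finish the gluing argument you already set up, which is essentially the paper's proof. Suppose $A(\varphi)=\id$. The glued isometry $\Phi$ of the even unimodular lattice $N$ has characteristic polynomial $F(x)=S(x)(x-1)^2$, and $1+\Phi$ is injective. Consider the finite group $N/(1+\Phi)N$, which has order $|F(-1)|=4|S(-1)|$. On it, the pairing $\beta(x,y)=\langle x,(1+\Phi)^{-1}y\rangle \bmod \mathbf{Z}$ is:
\begin{enumerate}
\item well defined;
\item nondegenerate, because $N$ is unimodular;
\item alternating, because $\beta(x,y)+\beta(y,x)=\langle x,y\rangle$ and $N$ is even.
\end{enumerate}
Hence $4|S(-1)|$ is a square, i.e.\ $|S(-1)|$ is a square, at every prime simultaneously; this is \cite[Theorem~6.1]{GM}. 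If $A(\varphi)=-\id$, glue with $-\id$ instead and use that $|F(1)|=4|S(1)|$ is a square.

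The paper glues instead with an order-$3$ rotation $\varphi'$ of $A_2(-1)$ (resp.\ $-\varphi'$). This gives characteristic polynomial $S(x)(x^2+x+1)$ (resp.\ $S(x)(x^2-x+1)$), so Gross--McMullen yields the first condition as well.
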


\begin{proof}
Denote the isometry by $\varphi$ and assume first that it induces a trivial action on the discriminant group $A(L)$ of $L$.
Consider an order 3 automorphism $\varphi'$ of $A_2(-1)$.
Then $(\varphi, \varphi')$ extends to an automorphism $\Phi\in O(\Lambda)$ 
for the even unimodular index 3 overlattice $\Lambda$ of $L\oplus A_2(-1)$ \cite[Corollary~1.5.2]{N}.
By construction, $\Phi$ has characteristic polynomial $S(x)(x^2+x+1)$,
so \cite[Theorem 6.1]{GM} applies to show that
\[
|3S(1)|, \;|S(-1)|, \textnormal{ and } {-3}S(1)S(-1)\]
are all squares.
If $\varphi$ induces a nontrivial action on $A(L)$,
then the same argument applies to the order 6 isometry $-\varphi'$ of $A_2(-1)$ to give that
\[
|S(1)|, \;|3S(-1)|, \textnormal{ and } {-3}S(1)S(-1)\]
are all squares.
Taken together, these two cases are equivalent to the statement of the lemma (modulo taking absolute values which follows from the inequalities $S(1) < 0 < S(-1)$).\qedhere

\end{proof}

\begin{prop}\label{thm:sufficient part og10 - local existence}
Assume that $|3S(1)S(-1)|$ is a square, and either
$|S(1)|$ is a square, or $|S(-1)|$ is a square. Then for every prime number $p$, the lattice $L\otimes\mathbf{Z}_p$ admits an isometry with characteristic polynomial $S$.
\end{prop}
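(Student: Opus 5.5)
The plan is to treat primes $p\neq 3$ and $p=3$ separately. Away from $3$, $L\otimes\mathbf{Z}_p$ is unimodular and the lemmas of Section~\ref{residuemaps} apply directly. At $p=3$ I would glue with a rank-two piece, as in the proof of Lemma~\ref{thm:necessary part og10}.

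First the arithmetic. Since $S(1)<0<S(-1)$ and $|3S(1)S(-1)|$ is a square, one of $|S(1)|,|S(-1)|$ is a square and the other is $3$ times a square. In particular $S(1)S(-1)\in -3(\mathbf{Q}^\times)^2$, and for every prime $p\neq 3$ both $v_p(S(1))$ and $v_p(S(-1))$ are even. We also have $\det L=(-1)^3\cdot 1\cdot 1\cdot 3=-3$.

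\textbf{Odd $p\neq 3$.} Lemma~\ref{p odd} gives an even unimodular $\mathbf{Z}_p$-lattice $N$ of rank $24$ with a semisimple isometry of characteristic polynomial $S$. By the determinant formula, $\det N\equiv S(1)S(-1)\equiv -3\equiv\det L$ modulo squares. Unimodular $\mathbf{Z}_p$-lattices with $p$ odd are classified by rank and determinant class, so $N\cong L\otimes\mathbf{Z}_p$.

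\textbf{$p=2$.} Here $\deg S=24=2n$ with $n=12$. The class of $(-1)^{12}S(1)S(-1)$ is $-3$, and both valuations at $2$ are even. So Lemma~\ref{p even} gives an even unimodular $\mathbf{Z}_2$-lattice $N$ with an isometry of characteristic polynomial $S$. Even unimodular $\mathbf{Z}_2$-lattices are determined by rank and determinant class, and the determinant formula again gives $\det N\equiv -3$. Hence $N\cong L\otimes\mathbf{Z}_2$.

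\textbf{$p=3$ (main obstacle).} Here $L\otimes\mathbf{Z}_3$ is not unimodular: it is a unimodular lattice of rank $23$ plus a $3$-modular piece of rank $1$, with $A(L\otimes\mathbf{Z}_3)\cong\mathbf{Z}/3$. Put $g=\Phi_3=x^2+x+1$ in the case $|S(-1)|$ is a square, and $g=\Phi_6=x^2-x+1$ in the case $|S(1)|$ is a square.

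The aim is an even unimodular $\mathbf{Z}_3$-lattice $N$ of rank $26$ with a semisimple isometry of characteristic polynomial $Sg$. I would build it with the equivariant Witt group. Consider a class $[E_S,q_{1,\lambda},\alpha]+[E_g,q_{1,\mu},\alpha]\in W^b_\Gamma(\mathbf{Q}_3)$, where the ambient quadratic spaces are chosen compatibly with \cite[Theorem~9.1]{B 15}. The goal is $\partial=0$, using \cite[Theorem~4.3(iv)]{BT}.

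At places where the extension is unramified, Proposition~\ref{residue}(a) lets us choose $\lambda$ to kill the residue. The ramified places need care:
\begin{itemize}
\item In the first case, $v_3(S(1))$ is odd, so $3\in\Pi^{r,+}_S$ by Proposition~\ref{odd ramification}. Also $3$ ramifies in $E_{\Phi_3}$ with $\overline\alpha=1$.
\item In the second case, $v_3(S(-1))$ is odd, so $3\in\Pi^{r,-}_S$. Also $3$ ramifies in $E_{\Phi_6}$ with $\overline\alpha=-1$.
\end{itemize}
By Proposition~\ref{residue}(b), each ramified place contributes a residue of odd dimension in the $(x\mp1)$-component of $W_\Gamma(\mathbf{F}_3)$. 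The map $\partial\circ t_1$ is injective onto the forms of that dimension parity. So we can pick $\mu$ making the contribution of $E_g$ cancel that of $E_S$, giving total residue $0$. The delicate bookkeeping is here: one must check that no other ramified places of $S$ over $3$ obstruct, and that the components pair up as claimed.

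Given $N$, set $N_1=\ker S(\varphi)$ and $N_2=\ker g(\varphi)$. Then $N_2$ is a rank-$2$ lattice with an order $3$ or $6$ isometry, and $N_2\cong A_2(-1)\otimes\mathbf{Z}_3$ up to a unit twist, so $A(N_2)\cong\mathbf{Z}/3$. Applying Lemma~\ref{lem:stable even overlattices} with $M=N_2$ gives $A(N_1)\cong A(N_2)(-1)$.

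Finally, $N_1$ has rank $24$, discriminant group $\mathbf{Z}/3$ with the same discriminant form as $L\otimes\mathbf{Z}_3$, and matching determinant (by the determinant formula for $Sg$). Jordan decompositions classify $\mathbf{Z}_3$-lattices, so $N_1\cong L\otimes\mathbf{Z}_3$, and $\varphi|_{N_1}$ is the required isometry with characteristic polynomial $S$.
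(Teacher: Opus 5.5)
Your treatment of $p\neq 3$ is exactly the paper's argument. At $p=3$ you take a different route from the paper. The paper prescribes the residue of $[E\otimes\mathbf{Q}_3,q_{1,\lambda},\alpha]$ to be the one-dimensional form $\delta$ of determinant $+1$, gets a stable almost unimodular lattice from \cite[Theorem~B]{BT}, and passes to a maximal overlattice. You instead cancel the $S$-residue against an auxiliary $\Phi_3$ or $\Phi_6$ summand, build a unimodular rank-$26$ lattice $N$, and split it as in Lemma~\ref{thm:necessary part og10}. That route can work, but your final step is not justified: the claim that $N_1$ has ``the same discriminant form as $L\otimes\mathbf{Z}_3$'' needs a sign choice you never make, and the determinant check cannot replace it.

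Here is why. The group $T(E_g\otimes\mathbf{Q}_3,\sigma_g)=\mathbf{Q}_3^\times/\mathrm{N}(\mathbf{Q}_3(\sqrt{-3})^\times)$ has order $2$. The $\varphi$-stable lattices in $V_g$ are the $\pi^j\mathcal{O}e$ with $\pi\bar\pi=3$, with Gram matrices $3^k u\left(\begin{smallmatrix}2&-1\\-1&2\end{smallmatrix}\right)$, where $u=\pm1$ is fixed by $\mu$. A priori $N\cap V_g$ may have $k>0$, so you must first replace it by the maximal integral stable lattice and replace $N_1$ by the matching overlattice from Lemma~\ref{lem:stable even overlattices}. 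After that, $N_2$ is either $A_2(-1)\otimes\mathbf{Z}_3$ or $A_2\otimes\mathbf{Z}_3$. Both cases occur in your construction: by Proposition~\ref{residue}(b) the $S$-residue at $v_0$ can be either odd-dimensional class of $W(\mathbf{F}_3)$, and then $\mu$ is forced. In the bad case, $A(N_1)\cong A(A_2)(-1)$ and $N_1\cong(U^{\oplus3}\oplus E_8^{\oplus2}\oplus A_2(-1))\otimes\mathbf{Z}_3$. This lattice has the same rank, the same discriminant group and the same determinant class $-3$ as $L\otimes\mathbf{Z}_3$. It is nevertheless not isometric to it: its $3$-modular Jordan constituent is $\langle -3\rangle$ rather than $\langle 3\rangle$, and $-1$ is not a square mod $3$.

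To fix this, choose the $S$-residue to be $\delta$, equivalently the $\mu$ that gives $A_2(-1)$; this is exactly the paper's choice. Alternatively, impose $V_S\cong L\otimes\mathbf{Q}_3$ and use the Hasse invariant. Your phrase ``compatibly with \cite[Theorem~9.1]{B 15}'' does neither of these explicitly.

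The bookkeeping you leave open is settled by \cite[Lemma~6.8]{BT} together with \cite[Proposition~6.6]{BT}. The lemma says the residue degrees over $\Omega_\pm$ sum to $v_3(S(\pm1))$ modulo $2$. Note that only places of odd residue degree give odd-dimensional residues, not ``each ramified place'' as you wrote. Once $\delta$ is chosen correctly, the auxiliary $E_g$ summand is no longer needed.
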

\begin{proof}
    We split the proof depending on whether $p = 3$.\smallskip

    Let first $p\neq 3$ be a prime number. The integer 3 is invertible in $\mathbf{Z}_p$ and the hypothesis implies that both of $v_p(S(1))$ and $v_p(S(-1))$ are even. Moreover, since $-3S(1)S(-1)$ is a square and $\deg(S) = 24 = 2\cdot 12$, we have $(-1)^{12}S(1)S(-1)(\mathbf{Q}_2^\times)^2 = -3(\mathbf{Q}_2^\times)^2$. We apply Propositions~\ref{p odd} and \ref{p even} to assert that there exists an even unimodular $\mathbf{Z}_p$-lattice $M_p$ having an isometry with characteristic polynomial $S$. Since the $p$-valuations of $S(1)$ and $S(-1)$ are even, the determinant formula (\ref{eq:det formula}) implies that $\operatorname{det}M_p = -3(\mathbf{Z}_p^\times)^2 = \operatorname{det}(L\otimes\mathbf{Z}_p)$. 
        
    Since $p \not = 3$, the lattice $L\otimes\mathbf{Z}_p$ is even unimodular of determinant $-3(\mathbf{Z}_p^\times)^2$. Therefore we have $\operatorname{det}M_p =
  \operatorname{det}(L\otimes\mathbf{Z}_p)$, and both $M_p$ and $L\otimes\mathbf{Z}_p$  are even unimodular lattices over $\mathbf{Z}_p$. Since even unimodular $\mathbf{Z}_p$-lattices are uniquely determined, up to isometry, by their rank and determinant class (see for instance \cite[Proposition~1.9.1]{N}), we infer that $M_p\cong L\otimes \mathbf{Z}_p$ and we conclude.
  
    \smallskip
    
    Let now $p = 3$. We proceed as in the proof of Claim 1 of the proof of \cite[Proposition 2.6]{B 25}. For the rest of the proof, we fix some notation. Let $E := \mathbf{Q}[x]/(S)$, let $\alpha$ be the image of $x$ in $E$ and let ${E_0}$ be the real subfield of $E$, fixed by the $\mathbf{Q}$-linear involution $\sigma\colon \alpha\mapsto \alpha^{-1}$. According to Proposition~\ref{odd ramification}, the polynomial $S$ is ramified at 3. There is a decomposition
    \[E_0\otimes\mathbf{Q}_3 \cong \prod_{v\in \Omega_{sp}}E_{0,v}\times\prod_{v\in \Omega_{un}}E_{0,v}\times\prod_{v\in \Omega_{+}}E_{0,v}\times\prod_{v\in \Omega_{-}}E_{0,v}\]
    where $\Omega_{sp}$ and $\Omega_{un}$ denote the sets of 3-adic places of $E_0$ which split, resp. which are unramified, in $E$. The sets $\Omega_{\pm}$ denote the sets of 3-adic places $v$ of ${E_0}$ such that $E_{v}/E_{0,v}$ is ramified and the image $\overline{\alpha}$ of $\alpha$ in the residue field $\kappa_v$ of $E_v$ is $ \pm1$. Here $E_{0,v} := (E_0)_v$ and $E_v := E\otimes_{E_0}E_{0,v}$ are the associated completions. Let moreover $\Omega = \Omega_{sp}\cup\Omega_{un}\cup\Omega_+\cup\Omega_-$.
    
    For every $v\in\Omega_{sp}$, the group $T(E_v, \sigma\otimes \operatorname{id})$ is trivial and we set $\lambda_v = 1$; note that $\partial[E_v, q_{1,\lambda_v}, \alpha] = 0$ in $W_{\Gamma}(\mathbf{F}_3)$. Moreover, for every $v\in \Omega_{un}$, we  apply \cite[Proposition~6.4]{BT} to find an element $\lambda_v\in T(E_{0,v}, \sigma\otimes \operatorname{id})$ such that $\delta[E_v, q_{1,\lambda_v}, \alpha] = 0\in W_{\Gamma}(\mathbf{F}_3)$.
    
    Now, by hypothesis, either $v_3(S(1))$ or $v_3(S(-1))$ is odd. Suppose that $v_3(S(1))$ is odd; 
    the rest of the proof follows similarly in the case where $v_3(S(-1))$ is odd. Then Proposition~\ref{odd ramification} tells us that $\Omega_+$ is nonempty, and according to \cite[Lemma 6.8]{BT} we have that 
    \[\underset {v \in \Omega_+} \sum [\kappa_v : \mathbf{F}_3] \equiv \ v_3(S(1))\mod2 \]
   and  is odd by hypothesis. Hence there exists a place $v_0\in \Omega_+$ such that $[\kappa_{v_0}:\mathbf{F}_3]$ is odd. We  use Proposition~\ref{residue}(b) to show that the image of the map
    \[\partial\circ
    t_1\colon T(E_{v_0}, \sigma\otimes \operatorname{id})\to W_{\Gamma}(\mathbf{F}_3, +) = W(\mathbf{F}_3)
    \]
    contains the Witt class $\delta$ of a one-dimensional form over $\mathbf{F}_3$ of determinant $+1$. Here we denote by $W_{\Gamma}(\mathbf{F}_3, \pm)$ the subgroup of $W_\Gamma(\mathbf{F}_3)$ generated by the symmetric $\mathbf{F}_3[\Gamma]$-bilinear forms $(W, q)$ on which a generator of $\Gamma$ acts as $\pm\operatorname{id}$. We let $\lambda_{v_0}$ be a representative of a preimage of $\delta$.
    
    Finally, by our assumption and \cite[Lemma 6.8]{BT}, the following congruences hold:
    \[\underset {v \in \Omega_+\setminus\{v_0\}} \sum [\kappa_v : \mathbf{F}_3] \equiv \underset {v \in \Omega_-} \sum [\kappa_v : \mathbf{F}_3] \ \equiv \ 0\mod2.\]
    Therefore, according to \cite[Proposition~6.6]{BT}, for every $v\in \left(\Omega_+\setminus\{v_0\}\right)\cup \Omega_-$ we can find $\lambda_v\in T(E_v, \sigma\otimes \operatorname{id})$ such that 
    \[\sum_{v\in \left(\Omega_+\setminus\{v_0\}\right)\cup \Omega_-}\partial[E_v, q_{1,\lambda_v}, \alpha] = 0 \in W(\mathbf{F}_3)\]
    holds. Hence, the element
    \[\lambda := (\lambda_v)_{v\in\Omega}\in T(E\otimes\mathbf{Q}_3, \sigma\otimes \operatorname{id})\]
    is such that $\partial[E\otimes\mathbf{Q}_3,q_{1,\lambda},\alpha] = \delta \in W_{\Gamma}(\mathbf{F}_3)$. Note that the symmetric $\mathbf{Q}_3[\Gamma]$-bilinear form $V$ representing $[E\otimes\mathbf{Q}_3,q_{1,\lambda},\alpha]$ is bounded. According to \cite[Theorem~B]{BT}, this implies that the form $V$ contains a stable almost unimodular lattice $M_3$. 
    By definition, the lattice $M_3$ is therefore even, 3-elementary and it has an isometry $\varphi$ with characteristic polynomial $S$. Its discriminant group $A(M_3)$ is an $\mathbf{F}_3$-vector space of odd dimension by Proposition \ref{residue}(b).
    Up to taking overlattices in $V$, we may assume that $M_3$ is maximal (see Proposition~\ref{lem:stable even overlattices}); we have that $v_3(\operatorname{det}M_3) = 1$ and the determinant of its discriminant form is $+1$ (by definition of $\delta$). Note that $M_3$ having an isometry with characteristic polynomial $S$ implies that its determinant is equal to $-3(\mathbf{Z}_3^\times)^2 = \operatorname{det}(L\otimes\mathbf{Z}_3)$ (see (\ref{eq:det formula})). Hence $M_3\cong L\otimes\mathbf{Z}_3$ and we can conclude.
\qedhere
\end{proof} 

\begin{proof}[Proof of  Theorem \ref{thm:salem for og10}]
The necessary part of the statement is Lemma~\ref{thm:necessary part og10}. For the sufficient part, we proceed as in the proof of \cite[Theorem 12.1]{B 25}.

Since $S$ is irreducible and $L$ is unique in its genus (see Remark~\ref{rem:unique up to iso - og10}), we can apply Proposition~\ref{thm:local to global irreducible polynomials} together with Proposition~\ref{thm:sufficient part og10 - local existence} to conclude that $L$ has an isometry with characteristic polynomial $S$.\qedhere
\end{proof}

In Example~\ref{ex:og10} we show that the set of Salem polynomials satisfying the assumptions from Theorem~\ref{thm:salem for og10} is not empty.

\begin{example}
\label{ex:og10}
    Let $S(x) := x^{24}-x^{23}-x^{22}+x^{18}-x^{16}+x^{12}-x^{8}+x^{6}-x^{2}-x+1$: it is a Salem polynomial with Salem number $\lambda \approx 1.568703475...$. It satisfies $S(1) = -1$ and $S(-1) = 3$. According to Theorem \ref{thm:salem for og10},
    the lattice $\Lambda_{\rm OG10}$ admits an isometry $f$ with characteristic polynomial $S$, and $f$ acts nontrivially on the discriminant group of $\Lambda_{\rm OG10}$. 
    We can check this computationally in the following way.
    \smallskip

    \noindent
    Let again $E := \mathbf{Q}[x]/(S)$, let ${E_0} := \mathbf{Q}(\lambda+\lambda^{-1})$ be its maximal real subfield and let $\mathcal{O}_E$ be a maximal order of $E$. In our situation we have that the equation order $\mathbf{Z}[\lambda]$ is already maximal (which is not guaranteed to be true in general, cf.\ Example~\ref{ex:og6}),
    so we can choose $\mathcal{O}_E = \mathbf{Z}[\lambda]$. 
    Let $M$ be a finitely generated free $\mathbf{Z}$-module of rank divisible by 24. By transfer along $\operatorname{Tr}^E_{\mathbf{Q}}$, equipping $M$ with an even nondegenerate symmetric bilinear form $q$ and an isometry $g\in O(M, q)$ with minimal polynomial $S$ is therefore the same as defining a projective $\mathbf{Z}[\lambda]$-module structure on $M$ and equipping it with a (nondegenerate) hermitian form $h$ whose norm satisfies $\mathfrak{n}(M, h)\subset \mathcal{D}_{{E_0}/\mathbf{Q}}^{-1}$ (inverse different). Suppose that $M$ is a projective $\mathbf{Z}[\lambda]$-module of rank 1 and let $h$ be a nondegenerate hermitian form on $M$. We write $M$ as $\mathfrak{A}x$ (pseudobasis) where $x\in M\otimes E$ and $\mathfrak{A}$ is a fractional ideal of $\mathbf{Z}[\lambda]$.  The associated trace form $(M, \operatorname{Tr}^E_{\mathbf{Q}}\circ h)$ is isometric to $\Lambda_{\rm OG10}$ if and only if
    \begin{enumerate}
        \item $\mathfrak{n}(M, h) = h(x)\mathfrak{A}\overline{\mathfrak{A}}\subset \mathcal{D}_{{E_0}/\mathbf{Q}}^{-1}$,
        \item $\operatorname{N}^E_{\mathbf{Q}}(\mathcal{D}_{E/\mathbf{Q}}\cdot \mathfrak{n}(M, h)) = 3\mathbf{Z}$,
        \item there exists a unique real place $\nu$ of ${E_0}$ extending to conjugate complex places of $E$ such that $\nu(h(x)) > 0$,
        \item there exists a unique real place $\mu$ of ${E_0}$ extending to a real place of $E$ such that $\mu(h(x)) > 0$.
    \end{enumerate}
    Using algorithms designed by Kirschmer \cite{kir16} it is effectively possible to enumerate a transversal for the (finitely many) isometry classes of hermitian lattices satisfying the above properties. Computing their trace lattices, we obtain isometries of $\Lambda_{\rm OG10}$ with characteristic polynomial $S$.
\end{example}

\section{Isometries of lattices for type {\rm OG6}}

Throughout this section, we let $L := \Lambda_{\rm OG6} := U^{\oplus3}\oplus A_1^{\oplus2}$, and we let $S$ be a Salem polynomial of degree 8. We prove the following.

\begin{theo}\label{thm:salem for og6} 
The lattice $L$ has an isometry with characteristic polynomial $S$ if and only if 
$|S(1)|$ and $|S(-1)|$ are both squares
or both twice a square.
\end{theo}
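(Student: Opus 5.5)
The proof will follow the OG10 case: a necessary direction via embedding into an even unimodular lattice, and a sufficient direction via local existence plus the genus argument.

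\emph{Necessity.} Let $\varphi\in O(L)$ have characteristic polynomial $S$. The discriminant group is $A(L)=A(A_1^{\oplus 2})\cong(\mathbf{Z}/2)^2$, with form $\langle -1/2\rangle\oplus\langle -1/2\rangle$. Its isometry group is generated by the swap of the two generators, so $A(\varphi)$ is either trivial or the swap. In both cases I embed $L$ primitively into an even unimodular lattice $\Lambda$ of rank $10$ with complement $A_1(-1)^{\oplus 2}\cong\langle 2\rangle^{\oplus2}$; this is possible by \cite[Corollary~1.5.2]{N}, since $A(\langle2\rangle^{\oplus 2})\cong A(L)(-1)$. I then extend $\varphi$ by an isometry $\varphi'$ of $\langle 2\rangle^{\oplus2}$ inducing the same action on discriminants:
\begin{itemize}
\item if $A(\varphi)$ is trivial, take $\varphi'=\id$ or $-\id$, with characteristic polynomials $(x-1)^2$ or $(x+1)^2$;
\item if $A(\varphi)$ is the swap, take $\varphi'$ the swap, with characteristic polynomial $(x-1)(x+1)$.
\end{itemize}
Applying \cite[Theorem 6.1]{GM} to $S(x)(x-1)^2$ gives that $|S(-1)|$ is a square. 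Using $(x+1)^2$ instead gives that $|S(1)|$ is a square. In the swap case, $S(x)(x^2-1)$ gives that $|S(1)S(-1)|$ is a square (it has no $\pm1$ factor of the relevant shape), and combined with the determinant formula for $L$ (namely $\det L=-4$, hence $S(1)S(-1)\equiv -1$ modulo squares) we get refined local conditions showing $|S(1)|$ and $|S(-1)|$ are both squares or both twice squares. I would check this case carefully: for a Salem factor, the condition from \cite{GM} for products with $x^2-1$ may only say that $S(1)S(-1)$ is $-1$ times a square. The finer statement should then come from a $2$-adic argument: $v_2(S(1))\equiv v_2(S(-1))\pmod 2$, and odd primes have even valuation by Proposition~\ref{odd ramification} applied to $L\otimes\mathbf{Z}_p$, which is unimodular for $p$ odd.

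\emph{Sufficiency.} The lattice $L$ is even indefinite with $\operatorname{rank}L=8\ge \ell(A(L))+2$, so it is unique in its genus by \cite[Theorem 1.14.2]{N}. By Proposition~\ref{thm:local to global irreducible polynomials} it suffices to produce, for every prime $p$, an isometry of $L\otimes\mathbf{Z}_p$ with characteristic polynomial $S$.
\begin{itemize}
\item For $p$ odd, both valuations $v_p(S(\pm1))$ are even under either hypothesis. Lemma~\ref{p odd} then gives an even unimodular $\mathbf{Z}_p$-lattice with such an isometry. By the determinant formula its determinant equals $S(1)S(-1)\equiv -1\equiv\det L$ in $\mathbf{Z}_p^\times/(\mathbf{Z}_p^\times)^2$, so by \cite[Proposition~1.9.1]{N} it is isometric to $L\otimes\mathbf{Z}_p$.
\item For $p=2$ with both values squares, take $(-1)^4S(1)S(-1)=-1\cdot\square$. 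Lemma~\ref{p even} does not directly apply, since the class is $-1$ rather than $1$ or $-3$. Instead I would construct a $2$-elementary lattice by mimicking the $p=3$ argument of Proposition~\ref{thm:sufficient part og10 - local existence}.
\item In the twice-square case, $S$ is ramified at $2$, and I would pick local hermitian data $\lambda_v$ so that the residue is the class of a $2$-dimensional form over $\mathbf{F}_2$, obtaining a stable $2$-elementary lattice with $A\cong(\mathbf{Z}/2)^2$ and the right discriminant form.
\end{itemize}
Proposition~\ref{residue}(c) says the residue is constant in the ramified case at $2$, so this step needs separate care, probably using $E_0$-places over $2$ and \cite[Theorem~B]{BT} together with an identification of $2$-adic lattices by rank, determinant, parity and discriminant form.

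\emph{Main obstacle.} I expect the hardest step to be the $2$-adic one: showing that the constructed $\mathbf{Z}_2$-lattice is exactly $U^{3}\oplus A_1^2\otimes\mathbf{Z}_2$, whose discriminant form is $\langle-1/2\rangle^2$ and not $u$ or $\langle 1/2\rangle^2$. This has to be controlled through the residue classes and the determinant formula. The other cases are routine adaptations of the OG10 proof.
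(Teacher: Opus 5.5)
Your treatment of odd primes and of the genus/local-to-global step matches the paper. The real gap is the $2$-adic local existence, which is the heart of the sufficiency direction. You only sketch a plan there, and that plan (choosing local hermitian data $\lambda_v$ and reading off residues, as for $p=3$ in the OG10 case) cannot work as stated.

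\textbf{Why the residue approach fails at $2$.} Under either hypothesis $S$ is ramified at $2$, and, as you note yourself, Proposition~\ref{residue}(c) removes all freedom at ramified places over $2$. Proposition~\ref{residue}(b), which drove the $p=3$ argument, has no counterpart in residue characteristic $2$. More fundamentally, residues live in $W_\Gamma(\mathbf{F}_2)$, a Witt group of \emph{bilinear} forms. It sees neither the evenness of the lattice nor the quadratic refinement of its discriminant form. For instance, over $\mathbf{Z}_2$ the lattices $U^{\oplus3}\oplus\langle2\rangle^{\oplus2}$ and $U^{\oplus3}\oplus\langle-2\rangle^{\oplus2}$ have the following properties:
\begin{itemize}
\item both are even of rank $8$ and determinant $-4$, with the same discriminant bilinear form;
\item their discriminant quadratic forms are $\langle1/2\rangle^{\oplus2}$ and $\langle-1/2\rangle^{\oplus2}$ respectively, and only the second lattice is $L\otimes\mathbf{Z}_2$.
\end{itemize}
So ``residue class plus determinant formula'' cannot certify the isometry class you need.

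\textbf{The missing idea.} You need to control the discriminant quadratic form through a rank-$2$ complement inside an even unimodular $\mathbf{Z}_2$-lattice.
\begin{itemize}
\item In the twice-square case, \cite[Theorem~1(a)]{B 21} gives an even unimodular $\mathbf{Z}_2$-lattice with an isometry $\Phi$ of characteristic polynomial $S(x)(x^2+1)$.
\item In the square case, $|S(\pm1)|$ are even squares, and \cite[Theorem~2]{B 21} gives $\Phi$ on $U^{\oplus5}$ with characteristic polynomial $S(x)(x^2-1)$.
\end{itemize}
On $I=\ker(\Phi^2\pm\id)$ one writes down by hand a $\Phi$-stable even overlattice with discriminant form $\langle1/2\rangle^{\oplus2}$, after rescaling by $-1$. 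In the square case one must also rule out the configuration where $\Phi$ swaps the two discriminant generators. This uses the \emph{refined} necessity statement: a nontrivial action on $A(L)$ forces twice-squares. Lemma~\ref{lem:stable even overlattices} then transports this to a $\Phi$-stable even overlattice of $\ker S(\Phi)$ with discriminant form $\langle-1/2\rangle^{\oplus2}\cong A(L\otimes\mathbf{Z}_2)$. Exactly two even rank-$8$ $\mathbf{Z}_2$-lattices have this discriminant form, with determinants $-4$ and $12$. Since $S(1)S(-1)\in-(\mathbf{Q}_2^\times)^2$, the determinant formula selects $L\otimes\mathbf{Z}_2$.

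\textbf{On necessity.} Gluing with the swap (polynomial $S(x)(x^2-1)$) yields nothing: $f(1)=f(-1)=0$ makes the Gross--McMullen conditions vacuous. The paper glues with an order-$4$ isometry ($x^2+1$) instead. Your fallback is nevertheless correct, and simpler than the paper's gluing. From $\det L=-4$, the quantity $-S(1)S(-1)$ is a square. Together with even $v_p(S(\pm1))$ for odd $p$, this gives the dichotomy.

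However, the evenness at odd $p$ does not follow from Proposition~\ref{odd ramification}, which only yields ramification. It follows from unimodularity of $L_p:=L\otimes\mathbf{Z}_p$. With $b$ the bilinear form, $(x,y)\mapsto b((1\mp\varphi)^{-1}x,y) \bmod \mathbf{Z}_p$ is a nondegenerate skew pairing on $L_p/(1\mp\varphi)L_p$, hence alternating for odd $p$. That group has order $p^{v_p(S(\pm1))}$, which must therefore be a square.

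This uniform argument does not recover the finer trivial/nontrivial dichotomy. That dichotomy is exactly what is needed in the $2$-adic square case above.
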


As in the previous section, we start by showing the necessary part of Theorem~\ref{thm:salem for og6}.

\begin{lemma}\label{thm:necessary part og6}
    If the lattice $L$ has an isometry with characteristic polynomial $S$, then $|S(1)|$ and $|S(-1)|$ are both squares or both twice a square.
\end{lemma}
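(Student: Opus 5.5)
We imitate the proof of Lemma~\ref{thm:necessary part og10}: embed $L$ primitively into an even unimodular lattice whose orthogonal complement is small and carries an explicit isometry, glue, and then apply \cite[Theorem 6.1]{GM} to the resulting isometry of the unimodular lattice. Here $A(L)\cong A(A_1)^{\oplus 2}\cong(\mathbf{Z}/2)^2$ with discriminant form $\langle -1/2\rangle^{\oplus 2}$. It is anti-isometric to the discriminant form of $A_1(-1)^{\oplus 2}$, and also to that of a suitable rank-2 positive definite lattice. We take $N:=A_1(-1)^{\oplus2}$, so that there is an even unimodular overlattice $\Lambda$ of $L\oplus N$ of signature $(5,5)$. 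Then $\Lambda\cong U^{\oplus 5}$, although we only need that it is even unimodular.

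Let $\varphi\in O(L)$ have characteristic polynomial $S$. The group $O(A(L))$ is generated by the swap of the two generators; it is the group $O(q)\cong\mathbf{Z}/2$, since the only elements of $A(L)$ with $q=-1/2$ are $e_1$ and $e_2$. Hence $A(\varphi)$ is either trivial or the swap, and we split into these two cases.

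\emph{Case 1: $A(\varphi)$ trivial.} Choose $\varphi'\in O(N)$ inducing the identity on $A(N)$, with the aim of making the characteristic polynomial of $\varphi'$ as informative as possible. The candidates are $\varphi'=\id$, giving $(x-1)^2$, and $\varphi'=-\id$, giving $(x+1)^2$; note that $-\id$ acts trivially on a $2$-elementary discriminant group. By \cite[Corollary~1.5.2]{N}, the pair $(\varphi,\varphi')$ extends to $\Phi\in O(\Lambda)$ with characteristic polynomial $S(x)(x\mp1)^2$. We then apply \cite[Theorem 6.1]{GM}, which governs when a product of a Salem factor and $(x\pm1)$ powers is realizable on an even unimodular lattice: the condition should amount to requiring that $|S(1)|$ and $|S(-1)|$ be squares. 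We expect that the two choices of sign give the conditions on $S(-1)$ and $S(1)$ respectively.

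\emph{Case 2: $A(\varphi)$ is the swap.} Now take $\varphi'$ to be the swap of the two $A_1(-1)$ summands, which has characteristic polynomial $x^2-1$, or its negative, which also induces the swap modulo $2$. Gluing again yields $\Phi$ with characteristic polynomial $S(x)(x-1)(x+1)$. The theorem of Gross--McMullen then forces $|2S(1)|$ and $|2S(-1)|$ to be squares, i.e.\ both values are twice a square. As in the OG10 case, we use $S(1)<0<S(-1)$ to handle the absolute values. Alternatively, in both cases one can cross-check with the determinant formula (\ref{eq:det formula}): $S(1)S(-1)\equiv\det L=-4\equiv-1$ modulo squares.

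The main obstacle is pinning down precisely the statement of \cite[Theorem 6.1]{GM} for polynomials $S\cdot(x-1)^a(x+1)^b$ with $a+b=2$, and checking that it yields both conditions in each case rather than only their product. If it gives only partial information, we would fall back on a local argument at $p=2$: a cyclic-factor/valuation computation on $L\otimes\mathbf{Z}_2$ combining the parity of $v_2(S(\pm1))$ with the odd-prime conditions. These conditions follow from the fact that $L\otimes\mathbf{Z}_p$ is unimodular for odd $p$, together with Proposition~\ref{odd ramification} and the residue analysis of Proposition~\ref{residue}.
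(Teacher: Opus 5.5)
Your Case~1 matches the paper. Gluing $\varphi$ with $\varphi'=\id$, resp.\ $\varphi'=-\id$, gives $F=S(x)(x-1)^2$, resp.\ $S(x)(x+1)^2$, and the nonzero values $F(-1)=4S(-1)$, resp.\ $F(1)=4S(1)$, yield the two square conditions. The paper cites \cite[Theorem~1(a)]{B 21} for this linear-factor situation.

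\textbf{Case~2 has a genuine gap.} With $\varphi'$ the swap of the two $A_1(-1)$ summands, or its negative, the glued $\Phi$ has characteristic polynomial $F=S(x)(x^2-1)$. Then $F(1)=F(-1)=0$, so the Gross--McMullen conditions (that $|F(1)|$, $|F(-1)|$ and $(-1)^5F(1)F(-1)$ be squares) are vacuous. No factor $2$ appears anywhere, and nothing about $S(\pm1)$ follows.

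The missing idea is to glue instead with the order-$4$ isometry $e_1\mapsto e_2$, $e_2\mapsto -e_1$ of $A_1(-1)^{\oplus2}$. Since $-e_1/2\equiv e_1/2$ in $A(N)$, it still induces the swap on the discriminant group, so it glues with $\varphi$. Its characteristic polynomial is $x^2+1$, so $F(\pm1)=2S(\pm1)\neq0$, and Gross--McMullen gives that $|2S(1)|$ and $|2S(-1)|$ are squares. This is exactly the paper's argument. It also shows that the alternative which holds determines $A(\varphi)$, a fact the paper uses later in Lemma~\ref{thm:sufficient og6 prime 2 - perfect square}.

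\textbf{Your fallback already closes the gap if you assemble it; no $2$-adic computation is needed.} For odd $p$, the lattice $L_p:=L\otimes\mathbf{Z}_p$ is unimodular. The pairing $\beta(x,y)=\langle x,(1-\varphi)^{-1}y\rangle \bmod \mathbf{Z}_p$ is then a well-defined nondegenerate form on $L_p/(1-\varphi)L_p$. It is alternating, because $\beta(x,y)+\beta(y,x)=\langle x,y\rangle$ and $2\in\mathbf{Z}_p^\times$. Hence $p^{v_p(S(1))}$ is a square, so $v_p(S(1))$ is even; the same holds for $v_p(S(-1))$ using $-\varphi$.

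Now write $|S(1)|=2^a m^2$ and $|S(-1)|=2^b k^2$. Your determinant-formula observation says $-S(1)S(-1)=|S(1)|\,|S(-1)|$ is a square, which forces $a\equiv b\pmod 2$. That is precisely the lemma. This route is shorter than the gluing argument, but it does not record the link with the discriminant action that the paper's proof gives.
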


\begin{proof}
We proceed similarly as in the proof of Lemma~\ref{thm:necessary part og10}.
Denote the isometry by $\varphi$ and assume first that it induces a nontrivial action on the discriminant group $A(L)$ of $L$.
Consider an order 4 automorphism $\varphi'$ of $A_1^{\oplus2}(-1)$.
Then $(\varphi,\varphi')$ extends to an automorphism $\Phi\in O(\Lambda)$ 
for the even unimodular index 4 overlattice $\Lambda$ of $L\oplus A_1^{\oplus2}(-1)$.
By construction, $\Phi$ has characteristic polynomial $S(x)(x^2+1)$,
so \cite[Theorem 6.1]{GM} applies to show that
\[
|2S(1)|, \;| 2S(-1)| \textnormal{ and } {-S(1)S(-1)}\]
are all squares, and we recover the second alternative of the lemma.

If $\varphi$ induces a trivial action on $A(L)$,
then the same gluing argument applies separately to $\id$ and to $-\id$ acting on $A_1^{\oplus2}(-1)$.
With  linear factors in the resulting characteristic polynomial of $\Phi$, \cite[Theorem~1(a)]{B 21} implies that
\[
|S(-1) | \; \text{ is a square for $\varphi'=\id$, and } \; |S(1) | \; \text{ is a square for } \;
 \varphi'=-\id.
\]
Together this exactly gives the first alternative of the lemma.\qedhere

\end{proof}

\medskip

To prove the sufficient part of Theorem~\ref{thm:salem for og6}, we want to proceed as in the proof of Proposition~\ref{thm:sufficient part og10 - local existence}. However, since the determinant of $L$ is even, proving the existence of an isometry of $L\otimes\mathbf{Z}_2$ with characteristic polynomial $S$ is nontrivial. One main obstruction is the application of Proposition~\ref{residue} which offers little room to control the parity of integral $\mathbf{Z}_2$-lattices. 

\begin{remark}\label{rem:strategy}
    We comment on two of the main obstacles to the generalization of the proof of Proposition~\ref{thm:sufficient part og10 - local existence} to the proof of Theorem~\ref{thm:salem for og6}. 
    
    The Witt group of torsion $\mathbf{Z}_p$-bilinear forms (see \cite[Section~4.2]{BT}) in the case $p=2$ is not as practical as for when $p$ is odd. One of the main reasons is that integral $\mathbf{Z}_2$-lattices need not be even. One way to solve such an issue would probably be to study the Witt group of \emph{torsion quadratic forms} on finite abelian 2-groups and generalize some of the results of \cite[Sections 3 and 4]{BT}.

    Note that in \cite[Section 8]{BT} the authors prove results on the existence of isometries of even unimodular $\mathbf{Z}_2$-lattices. As they recall in \cite[Proposition~8.3]{BT}, for a given (even) rank and up to isometry, there are only two even unimodular $\mathbf{Z}_2$-lattices. This is not the case anymore for even \emph{almost unimodular} $\mathbf{Z}_2$-lattices. For instance, including the one of $L\otimes \mathbf{Z}_2$, there are 10 isometry classes of even almost unimodular $\mathbf{Z}_2$-lattices of rank 8 and whose determinant has 2-valuation 2. A more careful study of such almost unimodular $\mathbf{Z}_2$-lattices, their isometries and their associated class in the Witt group of torsion quadratic forms from the previous paragraph could help to recover results similar to \cite[Section 8]{BT}.
\end{remark}

In what follows we therefore choose a more direct, computational approach. The proofs of Lemmas~\ref{thm:sufficient og6 prime 2 - twice a square} and \ref{thm:sufficient og6 prime 2 - perfect square} use ideas which are actually very similar to what is described in Remark~\ref{rem:strategy}. Our strategy is the following.

Up to isometry, there are two even $\mathbf{Z}_2$-lattices of rank 8 with the same discriminant group as $L\otimes\mathbf{Z}_2$, as torsion quadratic forms. These two isometry classes are distinguished by their determinant class being either $-4(\mathbf{Z}_2^\times)^2$ for the isometry class of $L\otimes\mathbf{Z}_2$, and $12(\mathbf{Z}_2^\times)^2$ for the other one \cite[Theorem~1.9.1]{N}. We can use the determinant formula (\ref{eq:det formula}) to deduce that between the previous two isometry classes, only the one of $L\otimes\mathbf{Z}_2$ can have an isometry with characteristic polynomial $S$ if $-S(1)S(-1)$ is a square. For our goal, it is therefore enough to show that if $|S(1)|$ and $|S(-1)|$ are both squares or both twice a square, we can find an even $\mathbf{Z}_2$-lattice of rank 8, whose discriminant group is isometric to the one of $L\otimes\mathbf{Z}_2$, and which has an isometry with characteristic polynomial $S$.

\begin{lemma}\label{thm:sufficient og6 prime 2 - twice a square}
    Assume that $|S(1)|$ and $|S(-1)|$ are both twice a square. Then the lattice $L\otimes\mathbf{Z}_2$ admits an isometry with characteristic polynomial $S$.
\end{lemma}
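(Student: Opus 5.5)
The plan is to reduce to a unimodular gluing problem over $\mathbf{Z}_2$, mirroring the proof of Lemma~\ref{thm:necessary part og6} in reverse, and then to use the strategy paragraph preceding the lemma. Write $S(1)=-2a^2$ and $S(-1)=2b^2$ with $a,b\in\mathbf{Z}$. Then $-S(1)S(-1)=4a^2b^2$ is a square. By that paragraph, it suffices to produce an even $\mathbf{Z}_2$-lattice $M$ of rank $8$ with $A(M)\cong A(L\otimes\mathbf{Z}_2)$ as torsion quadratic forms, together with an isometry of characteristic polynomial $S$. Indeed, the determinant formula (\ref{eq:det formula}) then gives $\det M=-4(\mathbf{Z}_2^\times)^2$, which rules out the other class $12(\mathbf{Z}_2^\times)^2$, so $M\cong L\otimes\mathbf{Z}_2$.

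\textbf{Step 1 (unimodular lattice).} Construct an even unimodular $\mathbf{Z}_2$-lattice $N$ of rank $10$ with a semisimple isometry $\Phi$ of characteristic polynomial $S(x)(x^2+1)$. I would do this with the existence results of \cite[Section~8]{BT} for even unimodular $\mathbf{Z}_2$-lattices, equivalently via \cite[Theorem~6.1]{GM}, whose conditions $|2S(1)|$, $|2S(-1)|$ and $-S(1)S(-1)$ all squares are exactly our hypothesis. Concretely, I would take a hermitian form on $E\otimes\mathbf{Q}_2$ together with one on $\mathbf{Q}_2(i)$. I would choose the local parameters $\lambda_v$ so that the residues cancel: the residue of the $\mathbf{Q}_2(i)$-part should be the negative of the residue coming from the $2$-adic places of $E_0$ ramified in $E$. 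Then \cite[Theorem~4.3(iv)]{BT} produces a stable unimodular lattice, and I would take an even one using \cite[Proposition~8.3]{BT}. The determinant of $N$ is $4S(1)S(-1)\equiv -1$ modulo squares, which is the class of the hyperbolic form, as needed for an even unimodular lattice of rank $10$.

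\textbf{Step 2 (splitting).} Let $N_1:=\ker S(\Phi)$ and $N_2:=\ker(\Phi^2+1)$. As in the proof of Lemma~\ref{lem:stable even overlattices}, unimodularity gives an anti-isometry $A(N_1)\cong A(N_2)(-1)$ compatible with the actions. Here $N_2$ is a rank one hermitian $\mathbf{Z}_2[i]$-lattice, so $N_2$ is a rank $2$ even lattice with an order $4$ isometry. Up to isometry over $\mathbf{Z}_2$, it is $\mathfrak{p}^k$ with the trace of a scaled norm form, where $\mathfrak{p}=(1+i)$. Its discriminant is therefore determined by a single valuation together with a unit class.

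\textbf{Step 3 (adjusting the glue).} The target is $N_2\cong A_1^{\oplus2}(-1)\otimes\mathbf{Z}_2$, which is the norm form on $\mathbf{Z}_2[i]$ scaled by $2$. In that case $A(N_1)\cong A(A_1^{\oplus2})$, which is $A(L\otimes\mathbf{Z}_2)$, and $M:=N_1$ finishes the proof. The glue group is killed by $\operatorname{Res}(S,x^2+1)=S(i)S(-i)$. Since $x^2+1\equiv(x+1)^2 \bmod 2$, the value $S(i)$ is congruent to $S(\pm1)$ modulo $\mathfrak{p}$, hence is nontrivial at $\mathfrak{p}$; so the gluing is genuinely nontrivial. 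If the discriminant of $N_2$ comes out too large, I would replace $N_2$ by a $\Phi$-stable even overlattice and transport this through the bijection $\gamma$ of Lemma~\ref{lem:stable even overlattices}. This replaces $N_1$ by the corresponding overlattice $M$, with $A(M)\cong A(\gamma^{-1}(\cdot))(-1)$. Such overlattices correspond to $\mathbf{Z}_2[i]$-stable isotropic subgroups, which are powers of $\mathfrak{p}$.

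\textbf{Main obstacle.} I expect the hardest step to be pinning down that the $2$-part of $N_2$ can be made exactly $(\mathbf{Z}/2)^2$ with the discriminant form of $A_1^{\oplus2}(-1)$. The two rank $2$ forms with discriminant $(\mathbf{Z}/2)^2$ are $A_1^{\oplus2}$ and $U(2)$, so I must exclude the possibility that the gluing forces $U(2)$, or an odd form. I would first try to rule this out with the determinant: $\det N_2\cdot\det N_1$ must equal $\det N=-1$ modulo squares, and $\det N_1=S(1)S(-1)=-4a^2b^2$. This forces $\det N_2\equiv 4$ modulo squares, which is the class of $A_1^{\oplus2}$ and not that of $U(2)$, which is $-4$. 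Failing that, I would fall back on computing the finitely many rank one hermitian $\mathbf{Z}_2[i]$-lattices explicitly.
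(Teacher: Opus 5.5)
Your route is the paper's route. You start from an even unimodular $\mathbf{Z}_2$-lattice $N$ with a semisimple isometry $\Phi$ of characteristic polynomial $S(x)(x^2+1)$; the paper takes this from \cite[Theorem~1(a)]{B 21}. Note that \cite[Theorem~6.1]{GM} is the necessity statement used in Lemma~\ref{thm:necessary part og6}, not an existence result, and your residue-cancellation sketch is not a proof of it. You then pass to a $\Phi$-stable even overlattice of $N_2=\ker(\Phi^2+1)$, transport it to $N_1$ via Lemma~\ref{lem:stable even overlattices}, and conclude with the determinant formula.

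\textbf{The gap is in Step 3.} You mislocate the obstacle, and your proposed fix does not close it. The $\mathbf{Z}_2[i]$-structure already fixes the shape of $N_2$. Write $N_2=\mathbf{Z}_2[i]e$; then $(e,\Phi e)=0$ and $(\Phi e,\Phi e)=(e,e)=2^m u$ with $m\geq 1$ and $u\in\mathbf{Z}_2^\times$. So $U(2)$ and odd forms never occur, and $\mathfrak{p}^{-(m-1)}N_2$ is a $\Phi$-stable even overlattice with Gram matrix $\operatorname{diag}(2u,2u)$. This is the paper's computation, split by the parity of $m$.

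What remains undetermined is $u \bmod 4$:
\begin{itemize}
\item If $u\equiv 1$, you get $A_1^{\oplus 2}(-1)$. The transported overlattice of $N_1$ has discriminant form $\langle -1/2\rangle^{\oplus 2}\cong A(L\otimes\mathbf{Z}_2)$, with values $0,3/2,3/2,1$ mod $2\mathbf{Z}_2$.
\item If $u\equiv 3$, you get $A_1^{\oplus 2}$. The transported form is $\langle 1/2\rangle^{\oplus 2}$, with values $0,1/2,1/2,1$, which is not isometric to $A(L\otimes\mathbf{Z}_2)$.
\end{itemize}
Your determinant test cannot separate these two cases: both $A_1^{\oplus 2}$ and $A_1^{\oplus 2}(-1)$ have determinant $4$. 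Indeed you write that $\det N_2\equiv 4$ is ``the class of $A_1^{\oplus 2}$'', while your target is $A_1^{\oplus 2}(-1)$. In any case $\det N_2\equiv 4$ follows directly from the determinant formula applied to $x^2+1$, so it carries no new information. Nothing in your Step 1 as sketched pins down $u\bmod 4$ either.

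\textbf{The missing step is the normalization the paper uses.} $N(-1)$ is again even unimodular, and $\Phi$ is still an isometry of it with the same characteristic polynomial. Rescaling by $-1$ swaps the two cases above, so after replacing $N$ by $N(-1)$ if necessary you may assume $u\equiv 1\bmod 4$. This is the paper's ``up to rescaling the form on $M$ by $-1$''. The resulting lattice has even rank $8$, so its determinant class $S(1)S(-1)\equiv -1$ is unchanged by the rescaling. The strategy paragraph then identifies it with $L\otimes\mathbf{Z}_2$ rather than with the class of determinant $12$.
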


\begin{proof}
    According to \cite[Theorem~1(a)]{B 21}, there exists an even unimodular $\mathbf{Z}_2$-lattice $M$ which admits an isometry $\Phi$ with characteristic polynomial $S(x)(x^2+1)$. For the rest of the proof, we implicitly see $\Phi$ as an isometry of $M\otimes\mathbf{Q}_2$. Let $I := \ker(\Phi^2+\id)\cap M$: it is an even $\mathbf{Z}_2$-lattice of rank 2 admitting an isometry with characteristic polynomial $x^2+1$. Up to rescaling the form on $M$ by $-1$ if necessary, there exists a basis $\{e,f\}$ of $I$ whose Gram matrix is
    \[\begin{pmatrix}
        2^{m}\alpha&0\\0&2^{m}\alpha
    \end{pmatrix},\]
    where $m\geq 1$ is a positive integer, $\alpha\in\{1,5\}$ and the action of $\Phi$ on said basis is given by the companion matrix of $x^2+1$. We distinguish two cases.\smallskip
    
    If $m = 2m'$ is even, then the $\mathbf{Z}_2$-lattice $\mathbf{Z}_2\frac{e+f}{2^{m'}}\oplus \mathbf{Z}_2\frac{f-e}{2^{m'}}$ is a $\Phi$-stable even overlattice of $I$. For the basis given above, its Gram matrix is
    $\begin{pmatrix}
        2\alpha&0\\0&2\alpha
    \end{pmatrix}$.\smallskip
    
    If $m = 2m'+1$ is odd, we conclude the same for the $\Phi$-stable even overlattice $\mathbf{Z}_2\frac{e}{2^{m'}}\oplus \mathbf{Z}_2\frac{f}{2^{m'}}$ of $I$.\smallskip
    
    In both of the cases, we conclude that $I$ admits an even overlattice with discriminant group $\begin{pmatrix}
        1/2&0\\0&1/2
    \end{pmatrix}\mod2\mathbf{Z}_2$ and which has an isometry with characteristic polynomial $x^2+1$. Because $M$ is unimodular, the discriminant groups of $I$ and $N:= \ker S(\Phi)\cap M$ are anti-isometric and the induced action of $\Phi$ on both discriminant groups coincide. This implies that our previous observation also holds for $N$, but for the rescaled form on discriminant group (see Lemma~\ref{lem:stable even overlattices}). In summary, the even $\mathbf{Z}_2$-lattice $N$ of rank 8 has an even overlattice with discriminant group 
    $\begin{pmatrix}
        -1/2&0\\0&-1/2
    \end{pmatrix}\mod2\mathbf{Z}_2$ and which has an isometry with characteristic polynomial $S$. We conclude by observing that the above torsion quadratic form is isometric to the discriminant group of $L\otimes \mathbf{Z}_2$.
    \qedhere
\end{proof}

\medskip

\begin{lemma}\label{thm:sufficient og6 prime 2 - perfect square}
    Assume that $|S(1)|$ and $|S(-1)|$ are both squares. Then the lattice $L\otimes\mathbf{Z}_2$ admits an isometry with characteristic polynomial $S$.
\end{lemma}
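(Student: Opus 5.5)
We mimic the proof of Lemma~\ref{thm:sufficient og6 prime 2 - twice a square}, replacing the factor $x^2+1$ by linear factors. By the strategy described before Lemma~\ref{thm:sufficient og6 prime 2 - twice a square}, it suffices to produce an even $\mathbf{Z}_2$-lattice of rank $8$ with an isometry of characteristic polynomial $S$ whose discriminant form is isometric to $A(L\otimes\mathbf{Z}_2)\cong A_1^{\oplus 2}$. Here the discriminant form is diagonal $\langle -1/2,-1/2\rangle \bmod 2\mathbf{Z}_2$. The determinant formula then forces the lattice to be isometric to $L\otimes\mathbf{Z}_2$ rather than to the other class, since $-S(1)S(-1)$ is a square.

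First step: use \cite[Theorem~1(a)]{B 21} to find an even unimodular $\mathbf{Z}_2$-lattice $M$ of rank $10$ with an isometry $\Phi$ of characteristic polynomial $S(x)(x-1)(x+1)$. The conditions of that theorem in presence of linear factors should be satisfied. Indeed, $|S(1)|$ and $|S(-1)|$ are squares, so $v_2(S(\pm1))$ are even, and $S(1)S(-1)\cdot(1-1)\ldots$ has to be handled by the version of the determinant condition with $\pm1$-eigenvalues. The relevant condition involves $|S(1)|$, $|S(-1)|$ being squares and the determinant of $M$ being $-1$ or $3$ up to squares. This would have to be checked carefully against the exact statement of \cite[Theorem~1(a)]{B 21}. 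Alternatively, one could use $S(x)(x-1)^2$ or $S(x)(x+1)^2$ with $\Phi$ acting by $\pm\id$ on a rank-$2$ complement, mirroring the gluing in Lemma~\ref{thm:necessary part og6}; I would try this variant first, since the trivial-action case there corresponds to squares.

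Second step: let $I:=\ker(\Phi\mp\id)\cap M$, a primitive even rank-$2$ $\mathbf{Z}_2$-lattice on which $\Phi$ acts as $\pm\id$, and $N:=\ker S(\Phi)\cap M$. Since the action on $I$ is scalar, every overlattice of $I$ is $\Phi$-stable. So one only needs an even overlattice $I'\supseteq I$ with discriminant form $\langle 1/2,1/2\rangle$, possibly after rescaling by $-1$. Classify $I$ up to isometry: as an even binary $\mathbf{Z}_2$-lattice it is $2^m U$, $2^m V$ (with $V$ the anisotropic even unimodular plane), or diagonal $\langle 2^a u, 2^b w\rangle$ with $a,b\geq1$. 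Its determinant is pinned by $\det N\cdot\det I \equiv \det M$ together with $\det N\equiv S(1)S(-1)$. Because $-S(1)S(-1)$ is a square, $\det I\equiv -\det M$ up to squares, and this determinant class should rule out the cases with no overlattice of the desired type. In the remaining diagonal cases, rescale the basis vectors by powers of $2$, as in the previous lemma, to reach $\langle 2u,2w\rangle$, and then adjust by a further index-$2$ gluing if needed to reach $\langle 2\alpha,2\alpha\rangle$ with the correct discriminant form.

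Third step: transport via Lemma~\ref{lem:stable even overlattices}. The overlattice $I'$ corresponds to a $\Phi$-stable even overlattice $N'$ of $N$ with $A(N')\cong A(I')(-1)\cong\langle -1/2,-1/2\rangle$, which carries an isometry with characteristic polynomial $S$; this concludes the argument. The main obstacle is the second step. Unlike the $x^2+1$ case, a scalar action gives no structural control on $I$, and an even binary form of unit or non-square determinant class, such as $U$ or $V$ at level $2^m$, need not admit an overlattice with discriminant $A_1^{\oplus2}$. So the case analysis must use the determinant congruence, and possibly the freedom to choose $\pm$ and the sign of the rescaling, to exclude bad cases. If this fails, I would instead construct $N'$ directly through the hermitian transfer description of Example~\ref{ex:og10}, computing locally at $2$.
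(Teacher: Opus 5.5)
Your outline has the right overall shape: glue a rank-$2$ piece to $N$ inside an even unimodular rank-$10$ lattice, then transport an overlattice via Lemma~\ref{lem:stable even overlattices}. The gap is Step~1, the existence of $(M,\Phi)$, which you never establish, and your two options for it fail in complementary ways.

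\textbf{The variant you prefer.} Here $\Phi$ is semisimple with characteristic polynomial $S(x)(x\mp1)^2$ and acts by $\pm\id$ on $I$. In this case your Step~2 actually closes:
\begin{enumerate}
\item An even unimodular $\mathbf{Z}_2$-lattice of rank $10$ is $U^{\oplus5}$ or $U^{\oplus4}\oplus V$, so $\det I\equiv-\det M\in\{1,-3\}$ modulo squares.
\item This excludes $2^mU$ and $2^mV$, and forces $I\cong\langle 2^au,2^bw\rangle$ with $a\equiv b\pmod 2$ and $u\equiv w\pmod 4$.
\item For $a,b$ odd, the overlattice $\langle 2u,2w\rangle$ has discriminant form $\langle c/2,c/2\rangle$. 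For $a,b$ even, rescale to $e',f'$ with $e'^2=4u$, $f'^2=4w$; the lattice spanned by $(e'\pm f')/2$ has the same discriminant form.
\end{enumerate}
However, nothing you cite produces such an $(M,\Phi)$. The gluing in Lemma~\ref{thm:necessary part og6} that you want to mirror runs in the wrong direction: it builds $(M,\Phi)$ from an isometry of $L$ acting trivially on $A(L)$, which is exactly what you are trying to construct. Conversely, your Steps~2 and~3 show that such an $(M,\Phi)$ immediately yields the lemma. So unless you can cite a precise existence theorem for $S(x)(x-1)^2$ over $\mathbf{Z}_2$, this step is no easier than the statement itself.

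\textbf{The option $S(x)(x^2-1)$, which the paper uses.} Existence does hold here, but for a reason you do not identify. Write $S(x)=x^4T(x+x^{-1})$ with $T\in\mathbf{Z}[x]$. Then $S(1)=T(2)\equiv T(-2)=S(-1)\pmod 4$. Together with $S(1)<0<S(-1)$, this forces $|S(1)|$ and $|S(-1)|$ to be squares of even integers, so $S$ is ramified at $2$. That is the hypothesis under which \cite[Theorem~2]{B 21} gives a semisimple isometry of $U^{\oplus5}$ with characteristic polynomial $S(x)(x^2-1)$.

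But now $\Phi$ acts on $I=\ker(\Phi^2-\id)$ by the non-scalar involution $e\mapsto e$, $f\mapsto -f$. So your key simplification, that every overlattice is stable, is unavailable. The paper argues instead as follows:
\begin{enumerate}
\item $I$ is definite, by comparing signatures.
\item With $e^2=2k$ and $f^2=2l$, the product $kl$ is a square, by the determinant formula.
\item If $v_2(k),v_2(l)$ are even, it takes $\mathbf{Z}_2\frac{e}{2^n}\oplus\mathbf{Z}_2\frac{f}{2^m}$.
\item If they are odd, the resulting overlattice is spanned by $(2^me\pm2^nf)/2^{m+n+1}$, and $\Phi$ swaps the generators of its discriminant group. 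This would give an isometry of $L\otimes\mathbf{Z}_2$ with nontrivial discriminant action. By the proof of Lemma~\ref{thm:necessary part og6}, that forces $|S(1)|$ to be twice a square, which is a contradiction.
\end{enumerate}
Neither this case split nor the exclusion argument appears in your proposal.
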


\begin{proof}
    We proceed similarly as in the proof of Lemma~\ref{thm:sufficient og6 prime 2 - twice a square}. As a first remark, we should note that since $S$ has degree 8 and both $|S(1)|$ and $|S(-1)|$ are squares, these numbers are both  squares of nonzero even integers. In particular, the polynomial $S$ is ramified at 2. Thus we know from \cite[Theorem~2]{B 21} that the even unimodular $\mathbf{Z}$-lattice $M := U^{\oplus5}$ has a semisimple isometry $\Phi$ with characteristic polynomial $S(x)(x^2-1)$. 
    Let $I:=\ker(\Phi^2-\id)$ be even of rank 2. Since $I$ has an involution with characteristic polynomial $x^2-1$, there exist two orthogonal primitive vectors $e,f\in I$ such that $\Phi(e) = e$ and $\Phi(f) = -f$ hold; by construction, either $I= \mathbf{Z}e\oplus \mathbf{Z}f$ or $I = \mathbf{Z}e\oplus \mathbf{Z}\frac{e+f}{2}$. Moreover, since $N:=\ker S(\Phi)$ has an isometry of characteristic polynomial $S$, its signature $(n_+, n_-)$ consists of odd integers. Since the same holds for $M$, we deduce that $I$ is definite, and thus $e^2 = 2k$ and $f^2=2l$ have the same sign. Because $M$ is unimodular, we can use the determinant formula (\ref{eq:det formula}) on $N$, which has an isometry with characteristic polynomial $S$, to infer that $kl$ must also be a square. So we distinguish again 2 cases.

    If $v_2(k)$ and $v_2(l)$ are both even, we write $v_2(k) = 2n$ and $v_2(l) = 2m$ with $n$ and $m$ nonnegative, and we observe that 
    \[\mathbf{Z}_2\frac{e}{2^n}\oplus \mathbf{Z}_2\frac{f}{2^{m}},\]
    seen as a $\mathbf{Z}_2$-lattice in $M\otimes\mathbf{Q}_2$, is a  $(\Phi\otimes\id)$-stable even almost unimodular overlattice of $I\otimes\mathbf{Z}_2$.

    Up to rescaling the form on $M$ by $-1$ if necessary, the even $\mathbf{Z}_2$-lattice $I\otimes \mathbf{Z}_2$ has a $(\Phi\otimes\id)$-stable even overlattice with Gram matrix $\begin{pmatrix}
        2\alpha&0\\0&2\alpha
    \end{pmatrix}$, where $\alpha\in\{1,5\}$. As we have concluded in the proof of Lemma~\ref{thm:sufficient og6 prime 2 - twice a square}, we can apply the same strategy to $N\otimes\mathbf{Z}_2$ to conclude that there exists an even almost unimodular $\mathbf{Z}_2$-lattice of rank 8 and discriminant group  $\begin{pmatrix}
        -1/2&0\\0&-1/2
    \end{pmatrix}\mod2\mathbf{Z}_2$, and which admits an isometry with characteristic polynomial $S$.
    \smallskip

    Now, if $v_2(k)$ and $v_2(l)$ are both odd, we write $v_2(k) = 2n+1$ and $v_2(l) = 2m+1$ with $n$ and $m$ again nonnegative, and we realize that
    \begin{equation}
    \label{eq:Z2-lattice}
        \mathbf{Z}_2\frac{2^me+2^nf}{2^{m+n+1}}\oplus\mathbf{Z}_2\frac{2^me-2^nf}{2^{m+n+1}}
    \end{equation}
    is a $(\Phi\otimes\id)$-stable even almost unimodular overlattice of $I\otimes\mathbf{Z}_2$. Note that the action of $\Phi$ on the discriminant group of the  $\mathbf{Z}_2$-lattice \eqref{eq:Z2-lattice} is given by exchanging the two given generators. This leads to a contradiction. In fact, if we mirror this process to the lattice $N\otimes \mathbf{Z}_2$, we obtain an isometry of $L\otimes\mathbf{Z}_2$ with characteristic polynomial $S$ and nontrivial action on the discriminant group. However, by the proof of Lemma~\ref{thm:necessary part og6}, this implies that $|S(1)|$ is twice a square, contradicting our assumptions on $S$. Hence this second case cannot occur.\qedhere
    
\end{proof}

\medskip

\begin{prop}\label{thm:sufficient part og6 - local existence}
Assume that $|S(1)|$ and $|S(-1)|$ are both squares, or both twice a square. Then for every prime number $p$, the lattice $L\otimes\mathbf{Z}_p$ admits an isometry with characteristic polynomial $S$.
\end{prop}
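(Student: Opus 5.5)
We split the argument according to whether $p=2$ or $p$ is odd, mirroring the proof of Proposition~\ref{thm:sufficient part og10 - local existence}.

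\textbf{The prime $p=2$.} This case is exactly Lemmas~\ref{thm:sufficient og6 prime 2 - twice a square} and \ref{thm:sufficient og6 prime 2 - perfect square}, according to which alternative of the hypothesis holds. Together with the discussion preceding them, they show that the even almost unimodular $\mathbf{Z}_2$-lattice produced there is isometric to $L\otimes\mathbf{Z}_2$. The reason is that it has the correct discriminant form, and its determinant class is forced to be $-4(\mathbf{Z}_2^\times)^2$ rather than $12(\mathbf{Z}_2^\times)^2$. That last point follows from the determinant formula (\ref{eq:det formula}), since $-S(1)S(-1)$ is a square. Note that $S(1)<0<S(-1)$, so in both alternatives $-S(1)S(-1)=|S(1)||S(-1)|$ is a square.

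\textbf{Odd primes $p$.} Under either alternative, $v_p(S(1))$ and $v_p(S(-1))$ are both even, because the only possible extra factor is $2$. By Lemma~\ref{p odd} there is an even unimodular $\mathbf{Z}_p$-lattice $M_p$ of rank $8$ with a semisimple isometry of characteristic polynomial $S$. Since $p$ is odd, $L\otimes\mathbf{Z}_p$ is also even unimodular of rank $8$, with determinant $-4(\mathbf{Z}_p^\times)^2=-(\mathbf{Z}_p^\times)^2$. On the other hand, the determinant formula gives
\[
\det M_p = S(1)S(-1)(\mathbf{Q}_p^\times)^2 = -|S(1)||S(-1)|(\mathbf{Q}_p^\times)^2 .
\]
This class equals $-(\mathbf{Q}_p^\times)^2$ in the "both squares" case, and $-4c^2(\mathbf{Q}_p^\times)^2=-(\mathbf{Q}_p^\times)^2$ in the "both twice a square" case. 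Since $M_p$ is unimodular, its determinant lies in $\mathbf{Z}_p^\times$, so the classes agree in $\mathbf{Z}_p^\times/(\mathbf{Z}_p^\times)^2$ as well. Even unimodular $\mathbf{Z}_p$-lattices are classified by rank and determinant class \cite[Proposition~1.9.1]{N}, hence $M_p\cong L\otimes\mathbf{Z}_p$, which proves the claim.

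\textbf{Main point.} Essentially all of the work is at $p=2$, and it is already done in the two preceding lemmas. For odd $p$ the only step requiring care is the determinant-class bookkeeping above, namely that the factor $4$ becomes a unit square.
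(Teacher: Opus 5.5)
Your proposal is correct and follows essentially the same route as the paper. For $p=2$ you cite Lemmas~\ref{thm:sufficient og6 prime 2 - twice a square} and \ref{thm:sufficient og6 prime 2 - perfect square}. For odd $p$ you combine Lemma~\ref{p odd} with the determinant formula and the classification of even unimodular $\mathbf{Z}_p$-lattices, as in the proof of Proposition~\ref{thm:sufficient part og10 - local existence}. Your explicit check that $S(1)S(-1)$ and $\det L=-4$ both give the class $-1$ modulo unit squares spells out what the paper leaves implicit.
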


\begin{proof}
    If $p$ is odd, then 2 is invertible in $\mathbf{Z}_p$ and our assumptions give that $v_p(S(1))$ and $v_p(S(-1))$ are both even. Hence, as in the proof of Proposition~\ref{thm:sufficient part og10 - local existence},  the result follows from applying Proposition~\ref{p odd}. The case $p=2$ follows from Lemmas~\ref{thm:sufficient og6 prime 2 - twice a square} and \ref{thm:sufficient og6 prime 2 - perfect square}.
    \qedhere
\end{proof}

\medskip
\begin{proof}[Proof of  Theorem \ref{thm:salem for og6}]
The necessary part of the statement is Lemma~\ref{thm:necessary part og6}.

For the sufficient part, we remark that since $L$ is even indefinite of rank 8 and its discriminant group can be generated by two elements, the same result of Nikulin as the one used in  Remark~\ref{rem:unique up to iso - og10} applies and we can deduce that $L$ is unique in its genus, up to isometry. Knowing this, and the fact that $S$ is irreducible, we apply again Proposition~\ref{thm:local to global irreducible polynomials} together with Proposition~\ref{thm:sufficient part og6 - local existence} to conclude that $L$ has an isometry with characteristic polynomial $S$.
\end{proof}

As in the OG10 case, we give in Examples~\ref{ex:og6} and \ref{ex:og6-2} Salem polynomials satisfying the conditions of Lemmas~\ref{thm:sufficient og6 prime 2 - perfect square} and \ref{thm:sufficient og6 prime 2 - twice a square}, respectively.

\begin{example}
\label{ex:og6}
        Let $S(x) = x^{8}-x^{7}-x^{6}-x^{5}-x^{3}-x^{2}-x+1$: it is a Salem polynomial with Salem number $\lambda\approx 1.9164982637...$. It satisfies $S(1) = -4$ and $S(-1) = 4$. According to Theorem \ref{thm:salem for og6} ,
         the  lattice $\Lambda_{\rm OG6}$ admits an isometry $\varphi$ with characteristic polynomial $S$.
         This can be verified as in Example \ref{ex:og10}, but the computations are  more involved because the equation order of $\mathbf{Q}[x]/(S)$ is not maximal, in this case.
\end{example}

\begin{example}\label{ex:og6-2}
     Let $S(x) = x^{8}-x^6-x^5-x^3-x^2+1$: it is a Salem polynomial with Salem number $\lambda\approx 1.4579874796...$. It satisfies $S(1) = -2$ and $S(-1) = 2$. Moreover, the  lattice $\Lambda_{\rm OG6}$ admits an isometry $\varphi$ with characteristic polynomial $S$ and in this case, $\varphi$ acts nontrivially on the discriminant group of $\Lambda_{\rm OG6}$.
\end{example}

\section{Period map and Hodge Torelli}

For a fixed deformation type of hyperk\"ahler manifolds with reference lattice $\Lambda$, there is a coarse moduli space $\mathcal{M}_\Lambda$ parameterizing equivalence classes of pairs $(X, \eta)$ where $X$ is hyperk\"ahler of the given deformation type, and $\eta\colon (H^2(X, \mathbf{Z}), q_X)\to \Lambda$ is an isometry, called a $\Lambda$-marking. 
The pair $(X, \eta)$ is referred to as a \emph{$\Lambda$-marked pair}.
Two such $\Lambda$-marked pairs $(X, \eta)$ and $(X', \eta')$ are equivalent, in the sense of the definition of $\mathcal{M}_{\Lambda}$, if there is a biholomorphism $f\colon X\to X'$ such that $\eta' = \eta\circ f^\ast$. The moduli space $\mathcal{M}_\Lambda$ has a period map
\[\mathcal{P}_\Lambda\colon \mathcal{M}_{\Lambda}\to \left\{\mathbf{C}\omega\in\mathbb{P}(\Lambda\otimes\mathbf{C})\mid \omega^2 = 0,\, (\omega, \overline{\omega}) > 0\right\},\; [X, \eta]\mapsto \eta(H^{2,0}(X)).\]

The following result is proved in \cite[Theorem 8.1]{Huy}.
\begin{prop}\label{thm:surj period map}
    Let $\mathcal{M}_{\Lambda}^\circ\subset\mathcal{M}_{\Lambda}$ be any connected component. Then the restriction of $\mathcal{P}_{\Lambda}$ to $\mathcal{M}_{\Lambda}^\circ$ is surjective.
\end{prop}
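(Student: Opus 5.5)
The statement is the surjectivity of the period map on each connected component $\mathcal{M}_\Lambda^\circ$, which is \cite[Theorem 8.1]{Huy}. I would follow Huybrechts' strategy rather than anything lattice-theoretic.

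The argument rests on two inputs. The first is \emph{local} surjectivity: by the local Torelli theorem (Beauville), $\mathcal{P}_\Lambda$ is a local biholomorphism from $\mathcal{M}_\Lambda$ onto the period domain $\mathcal{D}_\Lambda$. This comes from the unobstructedness of deformations (Bogomolov--Tian--Todorov) together with the isomorphism $H^1(X,T_X)\cong H^1(X,\Omega_X^1)$ induced by the symplectic form. In particular the image $\mathcal{P}_\Lambda(\mathcal{M}_\Lambda^\circ)$ is open. The second input is that $\mathcal{D}_\Lambda$ is connected. It can be identified with the Grassmannian of positive oriented $2$-planes in $\Lambda\otimes\mathbf{R}$; since $\Lambda$ has signature $(3,b_2-3)$, this space is connected.

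The key step is \emph{closedness}, via the twistor construction. For a marked pair $(X,\eta)$ and any Kähler class $\kappa$, the hyperkähler metric supplied by Yau's theorem gives a twistor family over $\mathbb{P}^1$. Its periods sweep out the entire conic of the positive $3$-space $W=\langle \mathrm{Re}\,\omega,\mathrm{Im}\,\omega,\kappa\rangle$ intersected with $\mathcal{D}_\Lambda$, and this whole conic lies in $\mathcal{P}_\Lambda(\mathcal{M}_\Lambda^\circ)$.

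Next one shows that any two points of $\mathcal{D}_\Lambda$ are joined by a chain of \emph{generic} twistor lines, meaning lines for which $W^\perp\cap\Lambda$ is trivial. At points of the corresponding conic, $\mathrm{NS}$ is trivial, so the Kähler cone equals the positive cone and every positive class is Kähler. Hence for each period in the image lying on such a line, the whole line is in the image.

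To conclude, take a generic period point $x$ in the open image and any $y\in\mathcal{D}_\Lambda$. Choose a chain of generic twistor lines from $x$ to $y$, which one can do because the set of non-generic $3$-spaces is a countable union of proper closed subsets. Propagating along the chain shows that $y$ is in the image. Points $y$ with nontrivial $\mathrm{NS}$ are handled by approximation: use a twistor line through $y$ and generic points near $y$, together with the fact that a positive class orthogonal to $\mathrm{NS}$-walls can be perturbed into the Kähler cone of a preimage.

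The main obstacle is the chain connectivity. One must prove that generic twistor lines connect arbitrary points. Huybrechts handles this with a careful count using the countability of $\Lambda$: the union of the bad loci $\alpha^\perp$, for $\alpha\in\Lambda$, is a countable union of real-codimension-$\geq 1$ subsets, so two generic points can be joined by a chain of two generic twistor lines. Everything else in the argument is standard.
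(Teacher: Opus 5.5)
The paper does not prove this statement itself. It cites Huybrechts' Theorem~8.1, and your outline (local Torelli giving an open image, connectedness of the period domain, twistor lines, chains of generic twistor lines) is Huybrechts' strategy. However, two of your intermediate claims are false as stated, and your propagation step rests on the first one.

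\textbf{First claim.} It is not true that $\mathrm{NS}$ is trivial at every point of a generic twistor line $T_W$ (i.e.\ $W^\perp\cap\Lambda=0$). Take any nonzero $\beta\in\Lambda$. Its orthogonal projection $\beta_W$ to $W$ is nonzero, so $\beta^\perp\cap W=\beta_W^\perp\cap W$ is a positive $2$-plane, i.e.\ a point of $T_W$ at which $\beta$ is a $(1,1)$-class. Hence every generic twistor line contains a countable, infinite set of points with nontrivial $\mathrm{NS}$. Your justification of ``if a point of $T_W$ is in the image, then all of $T_W$ is'' fails at exactly such points, and these may well be the intersection points of consecutive lines in your chain.

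The fix uses the openness you already have. If $T_W$ meets $\mathcal{P}_\Lambda(\mathcal{M}_\Lambda^\circ)$, the intersection is a nonempty open subset of the conic. So it contains one of the all-but-countably-many points $x'=\mathbf{C}\omega'$ with $\mathrm{NS}=0$. For a marked pair $(X',\eta')\in\mathcal{M}_\Lambda^\circ$ over $x'$ one has $\mathcal{K}_{X'}=\mathcal{C}_{X'}$. Write $W\cap\langle\mathrm{Re}\,\omega',\mathrm{Im}\,\omega'\rangle^{\perp}=\mathbf{R}\kappa$; then one of $\pm\kappa$ is K\"ahler, and the twistor family at $x'$ sweeps out all of $T_W$. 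With this lemma the propagation along a chain works at every step. Your separate ``approximation'' step for $y$ with $\mathrm{NS}\neq 0$ also becomes superfluous: $y$ lies on some generic twistor line by your countability argument, and that line is entirely in the image once the chain reaches it.

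\textbf{Second claim.} Two generic points cannot in general be joined by a chain of \emph{two} twistor lines. If $T_W\ni x$ and $T_{W'}\ni y$ meet, then $\dim(W\cap W')\geq 2$, so $\dim(W+W')\leq 4$. When $P_x+P_y$ is $4$-dimensional this forces $W\subset P_x+P_y$. That is impossible if $P_x+P_y$ has signature $(2,2)$. For example, take $P_x=\langle e_1,e_2\rangle$ and $P_y=\langle e_1+tf_1,e_2+tf_2\rangle$, with $e_i$ positive, $f_i$ negative, all mutually orthogonal, and $0<t<1$. This is an open condition on $(x,y)$, so it also occurs for generic pairs, even arbitrarily close ones.

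The countability of $\Lambda$ only lets you perturb the $3$-spaces of an \emph{existing} chain to be generic. The existence of a finite chain is a connectivity statement. One shows that every point has a neighbourhood whose points are reachable from it by a few lines, for instance by passing through intermediate planes that share a vector with the previous one. The equivalence classes of the ``reachable'' relation are then open, and connectedness of $\mathcal{D}_\Lambda$ gives a single class. Since you single out this step as the main obstacle, it needs to be stated and justified in this form rather than via a two-line count.
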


The space $\mathcal{M}_\Lambda$ is not connected, in general. According to \cite[Theorem~4.6]{Huy}, two points $[(X, \eta)],\, [(X', \eta')]\in \mathcal{M}_{\Lambda}$ lie in the same connected component if and only if $\eta^{-1}\circ\eta'$ is a parallel transport isometry (see \cite[Definition~1.1]{markman} for a definition). For a fixed $\Lambda$-marked pair $(X, \eta)$, the subgroup of isometries $\mu\in O(H^2(X, \mathbf{Z}), q_X)$ such that $[(X, \eta)]$ and $[(X, \eta\circ \mu)]$ lie in a same connected component is the so-called \emph{monodromy group} $\operatorname{Mon}^2(X)$ of $X$. It is of finite index in $O(H^2(X, \mathbf{Z}), q_X)$ (see \cite[Lemma 7.5]{markman}) and it is a normal subgroup for the known examples of hyperk\"ahler manifolds (cf. Table~\ref{tab:monodromy hk}). 

\medskip

Given a $\mathbf{Z}$-lattice $L$ of signature $(3,\, \operatorname{rk}L-3)$, the subgroup of $O(L)$ preserving the orientation of any positive definite three-space in $L\otimes\mathbf{R}$ is the so-called group of \emph{orientation-preserving} isometries, usually denoted by $O^+(L)$ \cite[Section~4]{markman}. It coincides with the kernel of the group homomorphism
\[\det\cdot \operatorname{spin}_{\mathbf{R}}\colon O(L)\to \mathbf{R}^\times/(\mathbf{R}^\times)^2\]
where $\det\colon  O(L)\to \{\pm 1\}$ is the determinant morphism and $\operatorname{spin}_{\mathbf{R}}$ is the real spinor norm (see \cite[Lemma~4.1]{markman} or \cite[Lemma~2.2]{bmm26}). Note that given a hyperk\"ahler manifold $X$, there is an inclusion $\operatorname{Mon}^2(X)\subset O^+(H^2(X,\mathbf{Z}), q_X)$ \cite[Lemma~4.1(2)]{markman}.

\begin{remark}\label{rem:very special monodromy}
    Looking at Table~\ref{tab:monodromy hk}, we remark that for a hyperk\"ahler manifold $X$ of the known deformation types, if $\varphi\in O(H^2(X,\mathbf{Z}), q_X)$ satisfies all of the following:
    \begin{enumerate}
        \item $\varphi$ is orientation-preserving,
        \item $\varphi$ acts trivially on the discriminant group of $(H^2(X,\mathbf{Z}), q_X)$,
        \item $\det(\varphi) = +1$,
    \end{enumerate}
    then $\varphi\in\operatorname{Mon}^2(X)$.
\end{remark}

Let $X$ be a hyperk\"ahler manifold. We denote by $\mathcal{C}_X$ the connected component of $\{x\in H^{1,1}(X, \mathbf{R})\mid q_X(x)>0\}$ containing a K\"ahler class, which we call the \emph{positive cone of $X$}. According to \cite[Section~4]{markman}, the stabilizer of $\mathcal{C}_X$ in $O(H^2(X, \mathbf{Z}), q_X)$ coincides with the group of orientation-preserving isometries $O^+(H^2(X, \mathbf{Z}), q_X)$. 

The positive cone $\mathcal{C}_X$ admits two wall-and-chamber decompositions prescribed by two sets $\mathcal{W}^{pex}(X)$ and $\mathcal{W}(X)$
of integral $(1,1)$-classes, satisfying $\mathcal{W}^{pex}(X)\subset \mathcal{W}(X)\subset \operatorname{NS}(X) = H^2(X,\mathbf{Z})\cap H^{1,1}(X)$. Their respective elements are known as \emph{(stably) prime exceptional divisors} and \emph{wall divisors}.

\begin{remark}
    According to \cite[Theorem~6.18]{markman}, the vectors in $\mathcal{W}^{pex}(X)$ are roots of $(H^2(X,\mathbf{Z}), q_X)$. We can thus define the \emph{Weyl group} $W(X)$ of $X$ to be the subgroup of $O(H^2(X,\mathbf{Z}), q_X)$ generated by the reflections in prime exceptional divisors $v\in \mathcal{W}^{pex}(X)$. Every element in $W(X)$ is a monodromy operator
    and it preserves the Hodge structure on $H^2(X, \mathbf{Z})$.
\end{remark}

 The connected component of 
\[\mathcal{C}_X\setminus \bigcup_{v\in \mathcal{W}^{pex}(X)}v^\perp\]
containing the K\"ahler cone $\mathcal{K}_X$ of $X$ is called the \emph{fundamental exceptional chamber} of $X$, and it is usually denoted by $\mathcal{FE}_X$. The K\"ahler cone of $X$ is itself a connected component of
\[\mathcal{C}_X\setminus \bigcup_{v\in \mathcal{W}(X)}v^\perp.\]

\begin{defn}
    Let $X$ be a hyperk\"ahler manifold. We define:
    \begin{enumerate}
        \item the group of \emph{Hodge monodromies} of $X$ to be the subgroup $\operatorname{Mon}^2_{\rm Hdg}(X)\subset \operatorname{Mon}^2(X)$ of monodromies preserving the Hodge structure on $H^2(X, \mathbf{Z})$,
        \item the group $\operatorname{Mon}^2_{\rm bir}(X)\subset \operatorname{Mon}^2_{\rm Hdg}(X)$ to be the subgroup of Hodge monodromies preserving the fundamental exceptional chamber of $X$,
        \item the group $\operatorname{Mon}^2_{\rm aut}(X)\subset \operatorname{Mon}^2_{\rm bir}(X)$ to be the subgroup of Hodge monodromies preserving the K\"ahler cone of $X$;
        \item 
        the group of bimeromorphisms $X\dashrightarrow X$ 
        by $\operatorname{Bir}(X)$ and
        \item 
        the subgroup of biholomorphisms
        by $\operatorname{Aut}(X)$.
    \end{enumerate}
\end{defn}

With these definitions, we can now state a Hodge-theoretic version of the global Torelli theorem for hyperk\"ahler manifolds, following \cite[Proposition~9.1]{Huy}, \cite[Theorem~1.3]{markman} and \cite[Theorem~2.1]{ht13}.

\begin{theo}[Hodge-theoretic Torelli theorem]
\label{thm:Hodge-Torelli}
    Let $X$ be a hyperk\"ahler manifold, and let $b_X\colon \operatorname{Bir}(X)\to O(H^2(X,\mathbf{Z}), q_X)$ be the natural orthogonal representation. Then all of the following hold:
    \begin{enumerate}
        \item the kernel of $b_X$ is finite and its isomorphism class depends only on the deformation type of $X$,
        \item the image of $b_X$ coincides with $\operatorname{Mon}^2_{\rm bir}(X)$,
        \item the image of $\operatorname{Aut}(X)$ under $b_X$ coincides with $\operatorname{Mon}^2_{\rm aut}(X)$.
    \end{enumerate}
\end{theo}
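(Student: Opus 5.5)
This statement is the Hodge-theoretic Torelli theorem as assembled from the literature, so I would not reprove it from scratch. Instead I would deduce each of the three items from the cited sources: the global Torelli theorem of Verbitsky in the form \cite[Theorem~1.3]{markman}, Huybrechts' description of the period map (Proposition~\ref{thm:surj period map} and \cite[Proposition~9.1]{Huy}), and \cite[Theorem~2.1]{ht13}.

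For (1), I would first show that $\ker b_X$ consists of automorphisms. If $f\in\operatorname{Bir}(X)$ acts trivially on $H^2(X,\mathbf{Z})$, then $f^\ast$ fixes a K\"ahler class. By Fujiki's criterion, a bimeromorphism of a hyperk\"ahler manifold sending some K\"ahler class to a K\"ahler class is biregular, so $f\in\operatorname{Aut}(X)$. The kernel of $\operatorname{Aut}(X)\to O(H^2)$ is then finite, because it lies in the group of automorphisms fixing a K\"ahler class. That group is compact (it preserves a Ricci-flat metric) and discrete (hyperk\"ahler manifolds have no holomorphic vector fields). To see that it depends only on the deformation type, I would follow Hassett--Tschinkel: over the connected moduli space of marked pairs, the groups $\ker(\operatorname{Aut}\to O(H^2))$ form a deformation-invariant family, since they extend along the universal family (they act trivially on $H^2$ and hence on the local period domain). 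I would take this deformation invariance from \cite[Theorem~2.1]{ht13}.

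For (2), I would prove the two inclusions separately.
\begin{itemize}
\item[$\subseteq$:] A bimeromorphism $f$ induces a Hodge isometry that is a parallel transport operator, by Huybrechts' theorem that birational hyperk\"ahler manifolds are non-separated points of the moduli space, so $f^\ast\in\operatorname{Mon}^2_{\rm Hdg}(X)$. Since $f^\ast$ maps the birational K\"ahler cone to itself, and the closure of that cone is $\overline{\mathcal{FE}_X}$ by \cite[Theorem~1.5]{markman}, it preserves $\mathcal{FE}_X$.
\item[$\supseteq$:] Let $\mu\in\operatorname{Mon}^2_{\rm Hdg}(X)$ preserve $\mathcal{FE}_X$. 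Then $(X,\eta)$ and $(X,\eta\circ\mu)$ lie in the same connected component and have the same period. By Verbitsky's global Torelli theorem, the two points are inseparable, so there is a bimeromorphism $f$ with $f^\ast=\mu\circ w$ for some $w$ in the Weyl group $W(X)$. Because $\mu$ and $f^\ast$ both preserve $\mathcal{FE}_X$, so does $w$. Since $W(X)$ acts simply transitively on exceptional chambers, this forces $w=\id$.
\end{itemize}

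For (3), I would restrict the same argument to automorphisms. If $f$ is biregular, then $f^\ast$ preserves $\mathcal{K}_X$. Conversely, if $\mu=f^\ast$ for $f\in\operatorname{Bir}(X)$ and $\mu$ preserves $\mathcal{K}_X$, then $f^\ast$ maps a K\"ahler class to a K\"ahler class, and Fujiki's criterion again makes $f$ biregular.

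The main obstacle is the simple transitivity of $W(X)$ on exceptional chambers together with the identification of the closure of the birational K\"ahler cone with $\overline{\mathcal{FE}_X}$. This is exactly Markman's theory of prime exceptional divisors, and it is what forces $w=\id$ in (2); I would quote \cite[Theorem~6.18 and Section~6]{markman} rather than reprove it. A secondary subtlety is the deformation invariance in (1), which I would likewise take from \cite{ht13}.
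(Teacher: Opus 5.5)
Your proposal is correct and takes the same approach as the paper, which also does not reprove this theorem but assembles it from \cite[Proposition~9.1]{Huy}, \cite[Theorem~1.3]{markman} and \cite[Theorem~2.1]{ht13}; your sketches of (1)--(3) are consistent with those sources. One step should be cited more carefully: passing from the inseparability of $(X,\eta)$ and $(X,\eta\circ\mu)$ to $f^\ast=\mu\circ w$ with $w\in W(X)$ does not follow from Verbitsky's theorem alone, but needs Huybrechts' description of inseparable pairs together with Markman's theorem $\operatorname{Mon}^2_{\rm Hdg}(X)=W(X)\rtimes b_X(\operatorname{Bir}(X))$ from Section~6 of \cite{markman}.
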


We report in Table~\ref{tab:monodromy hk} the description of the monodromy group $\operatorname{Mon}^2(X)$ and of the kernel of $b_X$ for $X$ a hyperk\"ahler manifold of the known deformation types and of dimension greater than 2. These were determined in \cite[Lemma 4.2]{mar10}, \cite[Theorem~4.3]{mon16}, \cite[Theorem~1.4]{mr21} and \cite[Theorem~5.4]{ono22} for the monodromy groups, and in \cite[Proposition~10]{bea83}, \cite[Corollary~5]{bnws11}, and \cite[Theorems~3.1 and 5.2]{mw17} for the kernels.

\begin{table}
    \caption{Monodromy and cohomologically trivial symmetries}
    \label{tab:monodromy hk}
\centering\setlength{\tabcolsep}{7pt}
\renewcommand\arraystretch{1.5}
\resizebox{\textwidth}{!}{
\begin{tabular}{ccc}
    
        deformation type of $X$&$\operatorname{Mon}^2(X)$&$\ker b_X$\\
        \hline
        
        $\mathrm{Kum}_n$&$\{\varphi\in O^+(H^2(X,\mathbf{Z}), q_X)\mid \operatorname{det}(\varphi)A(\varphi) = \id\}$&$C_{n+1}^4\rtimes C_2$\\
         \hline
        OG6&$O^+(H^2(X,\mathbf{Z}), q_X)$&$C_2^8$\\
        \hline
        $\mathrm{K3}^{[n]}$&$\{\varphi\in O^+(H^2(X,\mathbf{Z}), q_X)\mid A(\varphi) = \pm \id\}$&$1$\\
         \hline
        OG10&$O^+(H^2(X,\mathbf{Z}), q_X)$&$1$\\
        \hline
    \end{tabular}
    }
    
\end{table}

\section{Biholomorphisms of elliptic hyperkähler manifolds}
\label{s:iso_HK}

Given a hyperk\"ahler manifold $X$, we recall that its transcendental lattice $\operatorname{T}(X)$ is the smallest primitive sublattice of $(H^2(X,\mathbf{Z}), q_X)$ whose $\mathbf{C}$-span contains $H^{2,0}(X)$. When $X$ is hyperbolic or elliptic, it coincides with $\operatorname{NS}(X)^\perp$.

\begin{theo}\label{thm:Torelli NS trivial}
    Let $X$ be an elliptic hyperk\"ahler manifold, let $S$ be a Salem polynomial whose degree satisfies $4\leq \deg (S)\leq b_2(X)$ and let $\lambda$ be the corresponding Salem number. Suppose that there exists an isometry $\varphi\in \operatorname{Mon}^2(X)$ satisfying both of the following:
    \begin{enumerate}
        \item $\varphi\mid_{H^{2,0}(X)} = \zeta\cdot \id$ where $S(\zeta) = 0$ and $|\zeta|=1$,
        \item $\varphi\mid_{\operatorname{NS}(X)} = \id$.
    \end{enumerate}
    Then $X$ admits a biholomorphism of dynamical degree $\lambda$.
\end{theo}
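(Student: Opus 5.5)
The plan is to apply the Hodge-theoretic Torelli theorem (Theorem~\ref{thm:Hodge-Torelli}(3)). It suffices to show that $\varphi$, or a suitable modification of it, lies in $\operatorname{Mon}^2_{\rm aut}(X)$. We must therefore check three things: that $\varphi$ is a Hodge monodromy, that it preserves the K\"ahler cone $\mathcal{K}_X$, and that its dynamical degree is $\lambda$.

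\textbf{Step 1: Hodge isometry.} Since $\varphi$ fixes $\operatorname{NS}(X)$ pointwise, it preserves $\operatorname{T}(X)=\operatorname{NS}(X)^\perp$. By (1), $\varphi_{\mathbf C}$ preserves $H^{2,0}(X)$. Since $\varphi$ is real, it also preserves $H^{0,2}(X)=\overline{H^{2,0}(X)}$. Hence $\varphi$ preserves $H^{1,1}(X)=(H^{2,0}\oplus H^{0,2})^\perp$. So $\varphi\in\operatorname{Mon}^2_{\rm Hdg}(X)$.

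\textbf{Step 2: dynamical degree.} The restriction $\varphi|_{\operatorname{T}(X)}$ has $\zeta$ as an eigenvalue. Its characteristic polynomial has integer coefficients and $S$ is irreducible, so $S$ divides it. On $\operatorname{NS}(X)$ all eigenvalues equal $1$. Hence the eigenvalues of $\varphi$ are the roots of $S$, all other eigenvalues having modulus $1$. The largest modulus is therefore the Salem number $\lambda$. Once $\varphi$ is shown to come from an automorphism $f$ with $f^\ast=\varphi$, we get $\lambda(f)=\lambda$. The case of a finite kernel of $b_X$ causes no trouble, since we only need some preimage.

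\textbf{Step 3: preservation of $\mathcal{K}_X$ (main obstacle).} Since $\varphi\in\operatorname{Mon}^2(X)\subset O^+$, it preserves the positive cone $\mathcal{C}_X$. All wall divisors and prime exceptional divisors lie in $\operatorname{NS}(X)$. Since $\varphi$ fixes $\operatorname{NS}(X)$ pointwise, it fixes every wall hyperplane $v^\perp$ setwise. Thus it permutes the chambers of $\mathcal{C}_X\setminus\bigcup v^\perp$, and we need it to fix the chamber $\mathcal{K}_X$.

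The plan is to use that $X$ is elliptic. Then $\operatorname{NS}(X)_{\mathbf R}$ is negative definite, so $H^{1,1}(X,\mathbf R)=\operatorname{NS}(X)_{\mathbf R}\oplus(\operatorname{T}(X)_{\mathbf R}\cap H^{1,1})$. The space $P:=\operatorname{T}(X)_{\mathbf R}\cap H^{1,1}$ contains a positive direction. Every wall $v^\perp$ with $v\in\operatorname{NS}(X)$ contains $P$, so any point of $P\cap\mathcal{C}_X$ lies in the closure of every chamber.

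This is not quite enough by itself. Instead I would argue via the Euclidean structure. Pick a K\"ahler class $\kappa$ and decompose $\kappa=n+t$ with $n\in\operatorname{NS}(X)_{\mathbf R}$ and $t\in P$. Then $\varphi(\kappa)=n+\varphi(t)$. On $P$, $\varphi$ acts with eigenvalues $\lambda^{\pm1}$ and unimodular ones, and it preserves the positive part of the cone. The chamber containing a class $n+t'$, with $t'\in P$ positive, is determined by the signs of $(v,n+t')=(v,n)$ for $v\in\mathcal W(X)\subset\operatorname{NS}(X)$, since $v\perp P$. These signs do not depend on $t'$. Hence $\varphi(\kappa)$ lies in the same chamber as $\kappa$, namely $\mathcal{K}_X$, provided $\varphi(t)$ stays in the positive cone of $P$. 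This holds because $\varphi$ is orientation preserving and preserves $\mathcal{C}_X$. The remaining technical point is that $\mathcal{K}_X$ is a chamber for $\mathcal W(X)$ in this local-finiteness sense; this is standard.

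Therefore $\varphi\in\operatorname{Mon}^2_{\rm aut}(X)$, and Theorem~\ref{thm:Hodge-Torelli}(3) produces the desired biholomorphism.
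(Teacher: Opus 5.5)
Your proposal is correct and follows essentially the paper's route: by (1), $\varphi$ is a Hodge monodromy. Since $\varphi$ preserves $\mathcal{C}_X$ and $(v,\varphi\kappa)=(\varphi^{-1}v,\kappa)=(v,\kappa)$ for every wall divisor $v\in\operatorname{NS}(X)$, it preserves $\mathcal{K}_X=\{w\in\mathcal{C}_X\mid (v,w)>0\ \forall v\in\mathcal{W}_+(X)\}$, so Theorem~\ref{thm:Hodge-Torelli}(3) applies; your decomposition $\kappa=n+t$ is an unnecessary detour. The only loose point is Step~2, where, when $\operatorname{rk}\operatorname{T}(X)>\deg S$, the claim that all remaining eigenvalues have modulus $1$ needs a reason. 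For instance, $\varphi$ preserves the positive plane $\bigl(H^{2,0}(X)\oplus H^{0,2}(X)\bigr)\cap H^2(X,\mathbf{R})$ and the Lorentzian space $H^{1,1}(X,\mathbf{R})$, hence has at most one eigenvalue of modulus $>1$, which must be $\lambda$.
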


\begin{proof}
    By assumption (1), the isometry $\varphi$ preserves the Hodge structure on $H^2(X,\mathbf{Z})$ and in particular $\varphi\in \operatorname{Mon}^2_{\rm Hdg}(X)$. By assumption (2), $\varphi$ acts trivially on $\operatorname{NS}(X)$, meaning in particular that $\varphi$ fixes all wall divisors on $X$ and thus that it preserves the K\"ahler cone $\mathcal{K}_X$ of $X$. To see this, we remark that the set $\mathcal{W}(X)\subset \operatorname{NS}(X)$ of wall divisors on $X$ decomposes as a disjoint union
    \[\mathcal{W}(X) = \mathcal{W}_+(X)\sqcup \mathcal{W}_-(X)\]
    in such a way that
    \begin{enumerate}[label=(\alph*)]
        \item $\mathcal{W}_-(X) = \{-v\;\mid\; v\in\mathcal{W}_+(X)\}$,
        \item $\mathcal{K}_X = \left\{w\in \mathcal{C}_X\;\mid\; q_X(v,w) > 0,\,\forall v\in\mathcal{W}_+(X)\right\}$,
    \end{enumerate}
    where, as before, $\mathcal{C}_X$ denotes the positive cone of $X$. Since $\varphi$ acts trivially on $\mathcal{W}_+(X)$, we directly see that $\varphi$ preserves $\mathcal{K}_X$ (the isometry $\varphi$ is said to be \emph{positive} in McMullen's terminology, see for instance \cite{Mc2}). Hence Theorem~\ref{thm:Hodge-Torelli} implies that there exists some biholomorphism $f$ of $X$ whose induced cohomological action coincides with $\varphi$. In particular, the dynamical degree of $f$ is exactly $\lambda$.
    \qedhere
\end{proof}

\begin{coro}\label{thm:Torelli simple}
    Let $X$ be an elliptic hyperk\"ahler manifold with $\operatorname{NS}(X) = \{0\}$, let $S$ be a Salem polynomial of degree $b_2(X)$ and let $\lambda$ be the corresponding Salem number. Then $X$ admits a biholomorphism of dynamical degree $\lambda$ if and only if there exists an isometry $\varphi\in \operatorname{Mon}^2(X)$ with $S(\varphi)=0$ and such that $\varphi\mid_{H^{2,0}(X)} = \zeta\cdot \id$, where $S(\zeta) = 0$ and $|\zeta|=1$.
\end{coro}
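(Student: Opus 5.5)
The corollary has two directions, and I will handle them separately.

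\emph{Sufficiency.} This is Theorem~\ref{thm:Torelli NS trivial} applied with $\operatorname{NS}(X)=\{0\}$. In that case hypothesis (2) of the theorem is vacuous. Hypothesis (1) is exactly what we assume about $\varphi$. The degree condition $4\leq \deg S = b_2(X)$ holds because Salem polynomials have degree greater than $2$ and even degree, so $\deg S\geq 4$. Hence $X$ admits a biholomorphism $f$ with $f^\ast=\varphi$, and its dynamical degree is $\lambda$.

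\emph{Necessity.} Let $f\in\operatorname{Aut}(X)$ have dynamical degree $\lambda$, and set $\varphi:=f^\ast$. By Theorem~\ref{thm:Hodge-Torelli}, $\varphi\in\operatorname{Mon}^2_{\rm aut}(X)\subset\operatorname{Mon}^2(X)$, and $\varphi$ is an isometry of $H^2(X,\mathbf{Z})$ preserving the Hodge structure.

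First I show that $S(\varphi)=0$. The characteristic polynomial $\chi$ of $\varphi$ lies in $\mathbf{Z}[x]$ and has degree $b_2(X)=\deg S$. It has $\lambda$ as an eigenvalue of maximal modulus, because the spectral radius is $\lambda$ by definition. Since $\lambda$ is an algebraic integer with minimal polynomial $S$, we get $S\mid\chi$. Both polynomials are monic of the same degree, so $\chi=S$. As $S$ is irreducible and separable, $\varphi$ is semisimple, and $S(\varphi)=0$.

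Next I locate the eigenvalue on $H^{2,0}(X)$. This line is $\varphi$-stable, so $\varphi$ acts on it by some scalar $\zeta$, which is a root of $S$. I must rule out $\zeta\in\{\lambda,\lambda^{-1}\}$. For this, let $\sigma$ be the holomorphic symplectic form, with class $[\sigma]$. We have $q_X(\varphi\sigma,\varphi\bar\sigma)=q_X(\sigma,\bar\sigma)>0$. Because $\varphi$ is real, $\varphi\bar\sigma=\bar\zeta\bar\sigma$, so $|\zeta|^2 q_X(\sigma,\bar\sigma)=q_X(\sigma,\bar\sigma)$, which forces $|\zeta|=1$.

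I expect the main obstacle to be this last step, showing $|\zeta|=1$. The invariance argument above handles it, with no need to invoke the elliptic hypothesis or the transcendental lattice.
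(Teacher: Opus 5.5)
Your sufficiency direction and your computation of $|\zeta|=1$ are fine. The necessity direction, however, has a gap at the step where you claim that $\lambda$ itself is an eigenvalue of $\varphi=f^\ast$ ``because the spectral radius is $\lambda$ by definition''. The definition of $\lambda(f)$ only provides an eigenvalue $\mu$ with $|\mu|=\lambda$. Your divisibility $S\mid\chi$ needs $\mu$ to be a root of $S$, i.e.\ $\mu=\lambda$.

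This does not follow from the data you use. Suppose $\psi$ is a Hodge isometry of $H^2(X,\mathbf{Z})$ with characteristic polynomial $S$. Then $-\psi$ is again an integral Hodge isometry, acting on $H^{2,0}(X)$ by the scalar $-\zeta$ of modulus one, and its spectral radius is $\lambda$. Yet its characteristic polynomial $S(-x)$ is not divisible by $S$, since the conjugates of a Salem number are $\lambda^{-1}$ and numbers of modulus one, never $-\lambda$.

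What rules this out is that $f^\ast$ comes from a biholomorphism, so it maps K\"ahler classes to K\"ahler classes. Hence its restriction to $H^{1,1}(X,\mathbf{R})$ preserves the closed salient cone $\overline{\mathcal{K}_X}$, which has nonempty interior. By the Perron--Frobenius (Birkhoff) theorem, the spectral radius $\rho$ of $f^\ast|_{H^{1,1}(X,\mathbf{R})}$ is itself an eigenvalue. Alternatively, $f^\ast\in O^+$ restricts to a Lorentzian isometry of $H^{1,1}(X,\mathbf{R})$ preserving $\mathcal{C}_X$, whose eigenvalues off the unit circle form a pair $\rho^{\pm1}$ of positive reals; or you can quote \cite{oguiso08} or \cite[Theorem~3.2]{Mc1}, as the paper does in the proof of Corollary~\ref{cor:char}.

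Combined with your observation that the eigenvalues on $H^{2,0}(X)\oplus H^{0,2}(X)$ have modulus one, this gives $\rho=\lambda$, so $\lambda$ is an eigenvalue. You recorded $\varphi\in\operatorname{Mon}^2_{\rm aut}(X)$ but never used it; this is exactly where it is needed. This step, not $|\zeta|=1$, is the real obstacle.

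Once this is inserted, your argument is correct and differs slightly from the paper's. The paper uses $\operatorname{NS}(X)=\{0\}$, i.e.\ $\operatorname{T}(X)=H^2(X,\mathbf{Z})$, to see that the minimal polynomial of $f^\ast$ is irreducible, and then identifies it with $S$. Your degree count ($S\mid\chi$ and $\deg\chi=b_2(X)=\deg S$) gives $\chi=S$ directly. So your necessity argument does not actually need the hypothesis $\operatorname{NS}(X)=\{0\}$, only $\deg S=b_2(X)$ together with the positivity input above.
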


\begin{proof}
    The sufficient part of the statement follows directly from Theorem~\ref{thm:Torelli NS trivial}. For the necessary part, we remark that since $\operatorname{NS}(X)$ is trivial, the transcendental lattice $\operatorname{T}(X) = H^2(X, \mathbf{Z})$ of $X$ is of maximal rank. Hence any biholomorphism $f$ of $X$ 
    of dynamical degree $\lambda$ induces an isometry $f^\ast$ of $H^2(X,\mathbf{Z})$ with irreducible minimal polynomial $S$.
    Here any generator of $H^{2,0}(X)$ is an eigenvector 
    of $f^\ast_{\mathbf{C}}$ with eigenvalue $\zeta\in\mathbf{C}$ 
    where $\zeta$ is a root of $S$ satisfying $|\zeta|=1$.
    \qedhere
\end{proof}

\begin{remark}\label{rem:aut equal bir}
    We note that Corollary~\ref{thm:Torelli simple} only deals with biholomorphisms. The reason is that if $X$ is a hyperk\"ahler manifold with $\operatorname{NS}(X)$ trivial, then $\operatorname{Aut}(X) = \operatorname{Bir}(X)$. Indeed, in that case $X$ has no wall divisors and its K\"ahler cone coincides with its positive cone. In particular, any bimeromorphism of $X$ is biregular, since it induces an orientation-preserving isometry of $(H^2(X,\mathbf{Z}), q_X)$.
\end{remark}

\subsection{Salem numbers of maximal degree on OG6 and OG10}

Let $n\in \{8,24\}$ and define
\[\Lambda_n:= \begin{cases}
    \Lambda_{\rm OG6}&\text{if $n=8$}\\
    \Lambda_{\rm OG10}&\text{if $n=24$}
\end{cases}.\]
An isometry $\varphi\in O^+(\Lambda_n)$ is called \emph{effective} if there exists a hyperk\"ahler manifold $X$ of OG6-type (resp. OG10-type) and a marking $\eta\colon H^2(X,\mathbf{Z})\to \Lambda_n$ such that $\varphi$ lies in $\eta\operatorname{Mon}^2_{\rm bir}(X)\eta^{-1}$. By definition of $\operatorname{Mon}^2_{\rm bir}(X)$, the previous means exactly that there exists some bimeromorphism $f$ of $X$ such that $f^\ast\in O(H^2(X,\mathbf{Z}), q_X)$ coincides with $\eta^{-1}\varphi\eta$. Since characteristic polynomials are invariant under similarity, we have that the characteristic polynomials of $f^\ast$ and $\varphi$ are the same. If moreover $\varphi\in\eta\operatorname{Mon}^2_{\rm aut}(X)\eta^{-1}$, i.e. we can choose $f$ to be biregular, then we say that $\varphi$ is \emph{regular effective}.

\medskip

Our strategy to prove Theorems~\ref{thm1} and \ref{thm2} builds on the existence of isometries of $\Lambda_n$ with Salem characteristic polynomial
constructed in the preceding sections. 
It remains to prove that such an isometry is regular effective. This is done in two steps, combining
Corollary~\ref{thm:Torelli simple} and the surjectivity of the period map for hyperk\"ahler manifolds (see Proposition~\ref{thm:surj period map}).

\medskip

We recall that if $X$ is of OG6-type or of OG10-type, then $\operatorname{Mon}^2(X) = O^+(H^2(X,\mathbf{Z}), q_X)$ is maximal (see Table~\ref{tab:monodromy hk}).

\begin{prop}\label{thm:effectivity salem isom}
    Let $\varphi\in O(\Lambda_n)$ be an isometry whose characteristic polynomial $S$ is a Salem polynomial of degree $n$.
    Then $\varphi$ lies in $O^+(\Lambda_n)$ and it is regular effective.
\end{prop}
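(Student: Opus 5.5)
Let $S$ be the characteristic polynomial of $\varphi$; its roots are $\lambda,\lambda^{-1}$ and $n-2$ roots $\zeta,\bar\zeta,\dots$ on the unit circle. The plan has two steps: first show that $\varphi$ preserves orientation of positive three-spaces, then build a marked hyperk\"ahler manifold whose period is an eigenline of $\varphi$ and apply Corollary~\ref{thm:Torelli simple}.

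\textbf{Orientation.} Since $S$ is irreducible of even degree with no real roots of modulus one, $\varphi$ is semisimple over $\mathbf{R}$. Hence $\Lambda_n\otimes\mathbf{R}$ splits orthogonally into a plane $P_\lambda$ spanned by the $\lambda^{\pm1}$-eigenvectors and $(n-2)/2$ planes $P_\zeta$, on each of which $\varphi$ acts as a rotation. These rotations have angles different from $0$ and $\pi$, since $\pm1$ are not roots.

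The plane $P_\lambda$ is hyperbolic: the two eigenvectors are isotropic with nonzero pairing, so it has signature $(1,1)$. Each $P_\zeta$ is definite, because a rotation by an angle not in $\{0,\pi\}$ preserves only definite forms on a plane. Comparing with the signature $(3,n-3)$, exactly one $P_{\zeta_0}$ is positive definite and the remaining $P_\zeta$ are negative definite.

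Take the positive three-space $P_{\zeta_0}\oplus\mathbf{R}v$, where $v\in P_\lambda$ is a positive vector. The map $\varphi$ acts on $P_{\zeta_0}$ by a rotation, which preserves orientation, and sends $v$ to $\varphi(v)$. Now $\varphi(v)=a e_\lambda+b\lambda^{-1}e_{\lambda^{-1}}$ if $v=ae_\lambda+be_{\lambda^{-1}}$. Because $ab$ and $\lambda>0$ keep their signs, $\varphi(v)$ lies in the same component of the positive cone of $P_\lambda$ as $v$. Therefore $\varphi$ preserves the orientation of this positive three-space, and hence of every positive three-space. This gives $\varphi\in O^+(\Lambda_n)$. (Equivalently, $\det\varphi=S(0)=1$ and the real spinor norm is $+1$.)

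\textbf{Regular effectivity.} Let $\zeta_0$ be a root with $P_{\zeta_0}$ positive definite, and let $\omega\in\Lambda_n\otimes\mathbf{C}$ be a $\zeta_0$-eigenvector. Then $\omega^2=0$: indeed $(\omega,\omega)=\zeta_0^2(\omega,\omega)$ and $\zeta_0^2\neq1$. Moreover $(\omega,\bar\omega)>0$, since $\omega,\bar\omega$ span $P_{\zeta_0}\otimes\mathbf{C}$ and the form is positive there. So $\mathbf{C}\omega$ lies in the period domain.

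By Proposition~\ref{thm:surj period map} there is a marked pair $(X,\eta)$ of OG6- or OG10-type with $\eta(H^{2,0}(X))=\mathbf{C}\omega$. We need $\operatorname{NS}(X)=0$. A class in $\operatorname{NS}(X)$ is a rational vector orthogonal to $\omega$, and the space $\omega^\perp\cap\Lambda_n\otimes\mathbf{Q}$ is $\varphi$-stable because $\omega$ is an eigenvector. Since $S$ is irreducible, $\Lambda_n\otimes\mathbf{Q}$ is a simple $\mathbf{Q}[\varphi]$-module, so this subspace is $0$ or everything; it cannot be everything because the form is nondegenerate. Hence $\operatorname{NS}(X)=0$, and $X$ is elliptic.

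Now set $\psi:=\eta^{-1}\varphi\eta$. By the first step $\psi\in O^+(H^2(X,\mathbf{Z}))$, and $O^+=\operatorname{Mon}^2(X)$ for these types by Table~\ref{tab:monodromy hk}. Moreover $\psi$ acts on $H^{2,0}(X)$ by $\zeta_0$ and satisfies $S(\psi)=0$. Corollary~\ref{thm:Torelli simple}, together with the Torelli theorem applied to $\psi$ itself as in the proof of Theorem~\ref{thm:Torelli NS trivial}, gives an $f\in\operatorname{Aut}(X)$ with $f^\ast=\psi$. Thus $\varphi$ is regular effective.

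The main obstacle is the orientation step: getting the signature bookkeeping right and checking that preserving the component of the positive cone in $P_\lambda$ really implies orientation preservation on the full positive three-space. Everything after that is a direct application of the results already stated.
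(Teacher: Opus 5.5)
Your proof is correct.

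The second half matches the paper's argument: the eigenline $\mathbf{C}\omega$ for the root whose real eigenplane is positive definite, surjectivity of the period map, $\operatorname{NS}(X)=0$ by irreducibility of $S$, and $O^+=\operatorname{Mon}^2$. You are actually a bit more careful than the paper here. Corollary~\ref{thm:Torelli simple} as stated only produces \emph{some} biholomorphism of dynamical degree $\lambda$, whereas regular effectivity requires $\eta^{-1}\varphi\eta$ itself to be realized. Invoking the Torelli argument from the proof of Theorem~\ref{thm:Torelli NS trivial} for $\psi$ itself is the right way to close that gap.

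The genuine difference is the orientation step. The paper argues algebraically: $\det\varphi=+1$, and Zassenhaus's formula gives $\operatorname{spin}_{\mathbf{R}}(\varphi)=S(-1)(\mathbf{R}^\times)^2$, which is positive because the two real roots of $S$ are positive. Hence $\varphi$ lies in the kernel of $\det\cdot\operatorname{spin}_{\mathbf{R}}$. You instead check orientation by hand on the explicit positive three-space $W=P_{\zeta_0}\oplus\mathbf{R}v$, built from the real eigenplane decomposition.

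Your route avoids spinor norms and shows exactly where positivity of $\lambda$ enters. It also reuses the signature bookkeeping (exactly one positive definite eigenplane) that both proofs need anyway to choose $\zeta_0$. The paper's route is shorter and reduces the whole question to the sign of $S(-1)$.

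Two small repairs to your orientation step:
\begin{enumerate}
\item The image is $\varphi(v)=\lambda a\,e_\lambda+\lambda^{-1}b\,e_{\lambda^{-1}}$; you dropped the factor $\lambda$. What matters is that the signs of $a$ and $b$ are \emph{individually} preserved, not just the sign of $ab$.
\item Since $\varphi(W)\neq W$, the phrase ``preserves the orientation of $W$'' needs to be made precise. For example, compose $\varphi|_W$ with the orthogonal projection $\varphi(W)\to W$. Using $(\varphi v,v)=\frac{\lambda+\lambda^{-1}}{2}(v,v)$, this composite is block diagonal: the rotation on $P_{\zeta_0}$, and $v\mapsto\frac{\lambda+\lambda^{-1}}{2}v$ on $\mathbf{R}v$. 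Its determinant is therefore positive.
\end{enumerate}
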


\begin{proof} Since $S$ is monic and symmetric, we know that $\operatorname{det}\varphi = +1$. Moreover, by Zassenhaus's method to compute spinor norms (see for instance \cite[Equation~(2.1)]{Zas}), since $\Lambda_n$ has even rank and $S(-1)\neq 0$, the following holds:
\[\operatorname{spin}_{\mathbf{R}}(\varphi)=S(-1)(\mathbf{R}^\times)^2.\]
By the positivity of $S(-1)$ we conclude that $\varphi$ has positive real spinor norm. Hence $\varphi\in O^+(\Lambda_n)$. 
(This argument was inspired by the proof of \cite[Lemma 2.1]{kir19}.)

Because $\Lambda_n$ has signature $(3, n-3)$ and $S$ has only two real roots, there exists a non-real complex root $\zeta$ of $S$ satisfying that the real quadratic space $\ker(\varphi+\varphi^{-1}-(\zeta+\zeta^{-1})\id)$
is positive definite. Let $\mathbf{C}\omega\subset \Lambda_n\otimes\mathbf{C}$ be the eigenspace of $\varphi\otimes \id$ associated to $\zeta$. Since $\zeta$ is not real, we have that $\omega^2=0$ holds, and by the choice of $\zeta$, we therefore have that $(\omega,\overline{\omega})>0$ holds too. By the surjectivity of the period map for hyperk\"ahler manifolds (Proposition~\ref{thm:surj period map}), there exists a $\Lambda_n$-marked pair $(X, \eta)$ with period $\mathbf{C}\omega$, where $X$ is of OG6-type if $n=8$ and $X$ is of OG10-type if $n=24$. Since $\mathbf{C}\omega$ is preserved under the action of $\varphi$ and the characteristic polynomial of $\varphi$ is irreducible, we remark that 
\[\omega^\perp\cap \Lambda_n = \{0\}.\]
It follows that $\operatorname{NS}(X)$ is trivial. Transporting $\varphi$ to $H^2(X, \mathbf{Z})$ via $\eta$, we obtain an isometry $\eta^{-1}\varphi\eta\in O^+(H^2(X, \mathbf{Z}), q_X) = \operatorname{Mon}^2(X)$ whose characteristic polynomial is $S$ and of which $H^{2,0}(X)$ is an eigenspace (over $\mathbf{C}$) associated to 
some root $\zeta\in \mathbf{C}\setminus\mathbf{R}$ of $S$. Hence, according to Corollary~\ref{thm:Torelli simple}, we have $\eta^{-1}\varphi\eta\in \operatorname{Mon}^2_{\rm aut}(X)$ and $\varphi$ is regular effective, realized by some biholomorphism of $X$.
\qedhere
\end{proof}

\medskip
\begin{proof}[Proof of  Theorem \ref{thm1}] Follows from Theorem~\ref{thm:salem for og10} and Proposition~\ref{thm:effectivity salem isom}.
\end{proof}

\medskip
\begin{proof}[Proof of  Theorem \ref{thm2}] Follows from Theorem~\ref{thm:salem for og6} and Proposition~\ref{thm:effectivity salem isom}.
\end{proof}

\medskip
\begin{proof}[Proof of Corollary \ref{cor:char}]
According to the proof of \cite[Theorem~3.2]{Mc1}, which applies verbatim to hyperk\"ahler manifolds,
the characteristic polynomial of $f^\ast$ is a product of cyclotomic polynomials and of at most one Salem polynomial; 
this includes the notion of Salem polynomials of degree 2, which are all the polynomials of the form $x^2-bx+1$, where $b\geq 3$ is an integer. This implies in particular that the minimal polynomial $S$ of $f^\ast$ is either Salem or cyclotomic. By our assumption, the polynomial $S$ is not equal to $x-1$, and since $-\id\in O(H^2(X, \mathbf{Z}), q_X)$ is not orientation-preserving, we can conclude that $S$ is not linear and thus has even degree.

By the signature formula of McMullen (see \cite[Eq.~(4.4)]{Mc2}), if $S$ were cyclotomic,
then the signature of $(H^2(X, \mathbf{Z}), q_X)$ would consist of even integers. 
However, for all hyperk\"ahler manifolds, the positive index of inertia of the associated BBF-form is three. Thus $S$ cannot be cyclotomic, so it has to be a Salem polynomial. In that case, we also infer that $S$ is the characteristic polynomial of $f^\ast$. In particular, $\deg(S)=b_2(X)$, and since $b_2(X) \geq 3$, the polynomial $S$ is not quadratic.

To summarize, the characteristic polynomial of $f^\ast$ is equal to $S$, 
a Salem polynomial of even degree $b_2(X)\geq 3$.
The induced action of $f^\ast$ turns $H^2(X,\mathbf{Z})$ into a module of rank 1 over $\mathbf{Z}[x]/(S)$.
Hence the Hodge structure on $H^2(X,\mathbf{Z})$ is simple,
so $H^2(X,\mathbf{Z}) = \operatorname{T}(X)$ and thus $X$ is elliptic as claimed. 
The property that $f$ is biregular can now be deduced
from Remark~\ref{rem:aut equal bir}.

Among the known deformation types of hyperk\"ahler manifolds,
the evenness of $b_2(X)$ only leaves the three  types 
in the statement of Corollary \ref{cor:char}.
The given conditions then follow from \cite{B 25}, Theorem \ref{thm1} resp.\ Theorem \ref{thm2}.
\end{proof}

\subsection{Salem numbers realized from dimension 2}
\label{2toN}

Given $X$ a hyperk\"ahler manifold or a complex 2-torus, we call a bimeromorphism $f$ of $X$ \emph{algebraically trivial} if its induced action on $\operatorname{NS}(X)$ is trivial. In this concluding section, we show that the dynamical spectrum for elliptic hyperk\"ahler manifolds contains the dynamical degrees of algebraically trivial automorphisms of K3 surfaces and complex 2-tori.
The idea of the proof is rather simple.\smallskip

Start with a K3 surface (resp.\ complex 2-torus) $X$ together with an algebraically trivial biholomorphism $f$ of positive dynamical degree. In particular $X$ is elliptic. We use the well-known fact that $H^2(X,\mathbf{Z})$, equipped with the cup-product $\smile$, forms an even unimodular $\mathbf{Z}$-lattice which embeds into the BBF-form for OG10-type and $\mathrm{K3}^{[n]}$-type ($n\geq 2$) manifolds (resp. for hyperk\"ahler manifolds of all known types). Over $\mathbf{C}$ we can push the period $H^{2,0}(X)$ of $X$ along such an embedding and use the surjectivity of the period map for hyperk\"ahler manifolds (Proposition~\ref{thm:surj period map}) to conclude the existence of a hyperk\"ahler manifold $Y$ of the chosen deformation type whose transcendental lattice coincides with that of $X$ (after embedding). By extending the isometry induced by $f$ on $H^2(X,\mathbf{Z})$ to $H^2(Y,\mathbf{Z})$, requiring it to act as the identity on the orthogonal complement, we obtain an isometry in $O^+(H^2(Y,\mathbf{Z}), q_Y)$ which preserves the Hodge structure, acts trivially on $\operatorname{NS}(Y)$, has positive determinant and has trivial discriminant action. From that point, we can apply Theorem~\ref{thm:Torelli NS trivial}, to conclude that $Y$ has indeed an algebraically trivial biholomorphism of dynamical degree $\lambda$.

\begin{theo}\label{thm:k3 to hidger dim}
    Let $S$ be a Salem polynomial whose degree satisfies $4\leq \deg(S)\leq 22$ and let $\lambda$ be the associated Salem number. Suppose that there exists a K3 surface $X$ having an algebraically trivial biholomorphism $f$ of dynamical degree $\lambda$. 
    Then, both of the following hold:
    \begin{enumerate}
        \item there exists an elliptic OG10-type manifold having an algebraically trivial biholomorphism of dynamical degree $\lambda$,
        \item for every $n\geq 2$, there exists an elliptic $\mathrm{K3}^{[n]}$-type manifold having an algebraically trivial biholomorphism of dynamical degree $\lambda$.
    \end{enumerate}
\end{theo}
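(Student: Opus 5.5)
We follow the strategy sketched just before the statement. Let $\Lambda_{\rm K3} := U^{\oplus3}\oplus E_8^{\oplus2} = H^2(X,\mathbf{Z})$ and let $\Lambda$ denote either $\Lambda_{\rm OG10} = \Lambda_{\rm K3}\oplus A_2$ or $\Lambda_{{\rm K3}^{[n]}} = \Lambda_{\rm K3}\oplus A_1(n-1)$. In both cases $\Lambda$ contains $\Lambda_{\rm K3}$ as an orthogonal direct summand, with complement $R := A_2$ resp.\ $A_1(n-1)$, which is negative definite.

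Fix a marking of $X$ and let $\psi := f^\ast\in O(\Lambda_{\rm K3})$. Define the isometry $\varphi := \psi\oplus \id_R\in O(\Lambda)$. We record its properties in turn.
\begin{enumerate}
    \item It acts trivially on $A(\Lambda) = A(R)$, since $\Lambda_{\rm K3}$ is unimodular.
    \item Its determinant is $\det\psi = 1$, because $\psi$ has a symmetric characteristic polynomial that is a Salem polynomial times cyclotomic factors; if needed, this follows from the fact that $\psi$ preserves the Kähler cone and acts by $\zeta$ on $H^{2,0}$.
    \item It is orientation preserving. The positive $3$-spaces of $\Lambda$ lie in $\Lambda_{\rm K3}\otimes\mathbf{R}$, since $R$ is negative definite. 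Moreover $\psi$ preserves the orientation of positive $3$-spaces, because it preserves $H^{2,0}\oplus H^{0,2}$ with its natural orientation and also preserves the positive cone (it maps a Kähler class to a Kähler class).
\end{enumerate}
By Remark~\ref{rem:very special monodromy}, and since $O^+ = \operatorname{Mon}^2$ for OG10 while for ${\rm K3}^{[n]}$ one needs $A(\varphi)=\pm\id$, it follows that $\varphi$ is a monodromy operator for any marked manifold of the given type.

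Next we choose the period. Let $\omega\in\Lambda_{\rm K3}\otimes\mathbf{C}$ span the image of $H^{2,0}(X)$, so that $\omega^2 = 0$ and $(\omega,\bar\omega) > 0$, and view it in $\Lambda\otimes\mathbf{C}$. By Proposition~\ref{thm:surj period map}, there is a marked pair $(Y,\eta)$ of the required deformation type with period $\mathbf{C}\omega$. Then
\[\operatorname{NS}(Y)\cong \omega^\perp\cap\Lambda = \operatorname{NS}(X)\oplus R,\]
which is negative definite because $X$ is elliptic. Indeed, $X$ is elliptic since it carries an automorphism of Salem dynamical degree acting trivially on $\operatorname{NS}(X)$: were $\operatorname{NS}(X)$ hyperbolic or parabolic, an invariant isotropic or positive class would force $\lambda = 1$, by the standard argument of \cite{oguiso08}. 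Hence $Y$ is elliptic, with $T(Y) = T(X)$.

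We now check the hypotheses of Theorem~\ref{thm:Torelli NS trivial} for $\eta^{-1}\varphi\eta$. It acts on $H^{2,0}(Y)$ by the same scalar $\zeta$ with which $f^\ast$ acts on $H^{2,0}(X)$, and $\zeta$ is a root of $S$ with $|\zeta| = 1$. It is trivial on $\operatorname{NS}(Y) = \operatorname{NS}(X)\oplus R$, since $\psi$ is trivial on $\operatorname{NS}(X)$ and $\varphi$ is the identity on $R$. Its spectral radius is $\lambda$, since the new eigenvalues are all $1$. The degree bound $4\le\deg S\le 22\le b_2(Y)$ holds. Theorem~\ref{thm:Torelli NS trivial} therefore yields a biholomorphism $g$ of $Y$ with $g^\ast = \eta^{-1}\varphi\eta$. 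By construction $g$ acts trivially on $\operatorname{NS}(Y)$, so it is algebraically trivial, and its dynamical degree is $\lambda$.

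The main obstacle is the orientation check in property (3), which is needed to land in $\operatorname{Mon}^2$. We will argue it via positive cone preservation: a positive $3$-space is spanned by $\operatorname{Re}\omega$, $\operatorname{Im}\omega$ and a Kähler class $\kappa$. The isometry $\psi$ acts on $\langle\operatorname{Re}\omega,\operatorname{Im}\omega\rangle$ by a rotation, which preserves the orientation of that plane, and it sends $\kappa$ to a Kähler class, hence into the same positive cone. These two facts together give orientation preservation. We also double-check the ${\rm K3}^{[n]}$ monodromy condition $A(\varphi) = \id$, which holds trivially here.
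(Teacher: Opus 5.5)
Your proof is correct and follows the paper's argument step by step: extend $f^\ast$ by the identity on $A_2$ resp.\ $A_1(n-1)$, check the monodromy conditions, transport the period of $X$ to some $Y$ via surjectivity of the period map, read off $\operatorname{NS}(Y)\cong\operatorname{NS}(X)\oplus R$, and apply Theorem~\ref{thm:Torelli NS trivial}; the only difference is that you verify orientation preservation geometrically (rotation on the period plane plus preservation of the K\"ahler cone), whereas the paper reads it off from the real spinor norm computed with Zassenhaus' formula as in Proposition~\ref{thm:effectivity salem isom}. Two statements should be tightened: not every positive $3$-space of $\Lambda\otimes\mathbf{R}$ lies in $\Lambda_{\rm K3}\otimes\mathbf{R}$ (it suffices that one can be chosen there, where $\varphi$ acts as $\psi$), and $\det\psi=1$ holds because algebraic triviality makes the characteristic polynomial $S(x)(x-1)^{22-\deg S}$, not because of a Salem-times-cyclotomic shape alone (an odd power of $x+1$ would give $-1$) --- although the determinant plays no role for the OG10 and $\mathrm{K3}^{[n]}$ monodromy groups anyway.
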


\begin{proof}
    Let $X$ and $f$ be as in the statement. By assumption, the isometry $\varphi\in O^+(H^2(X,\mathbf{Z}), \smile)$ induced by $f$ satisfies both of the following:
    \begin{itemize}
        \item $\varphi\mid_{\operatorname{T}(X)}$ has characteristic polynomial $S$,
        \item  $\ker(\varphi-\id) = \operatorname{NS}(X)$. 
    \end{itemize}
    Moreover, since $\varphi$ is of geometric origin, there exists a complex root $\zeta\in\mathbf{C}\setminus\mathbf{R}$ of $S$ such that $\varphi\mid_{H^{2,0}(X)} = \zeta\cdot \id$. Let $\Lambda := (H^2(X,\mathbf{Z}), \smile)\oplus A_2$: as a $\mathbf{Z}$-lattice, it is isometric to the BBF-form of an OG10-type manifold. We extend $\varphi$ to $\Phi\in O(\Lambda)$ as $\Phi:=\varphi\oplus\id_{A_2}$. Similarly to the beginning of the proof of Proposition~\ref{thm:effectivity salem isom}, we know that $\varphi$ has trivial determinant and trivial spinor norm. We thus observe all of the following:
    \begin{enumerate}[label=(\alph*)]
        \item $\operatorname{det}(\Phi) = +1$,
        \item $\Phi\in O^+(\Lambda)$,
        \item the minimal polynomial of $\Phi$ is $S(x)(x-1)$ with $\ker(\Phi-\id) = \operatorname{NS}(X)\oplus A_2$,
        \item in regard to the previous point, $\Phi$ acts trivially on the discriminant group of $\Lambda$.
    \end{enumerate}
    By the surjectivity of the period map for hyperk\"ahler manifolds (see Proposition~\ref{thm:surj period map}), there exists a $\Lambda$-marked pair $(Y, \eta)$ with $Y$ of OG10-type and with period $H^{2,0}(X)$ (seen as a subspace of $\Lambda\otimes\mathbf{C}$). We observe that 
    \[\eta(\operatorname{NS}(Y)) = (H^{2,0}(X)\oplus H^{0,2}(X))^\perp\cap\Lambda = \operatorname{NS}(X)\oplus A_2,\]
    meaning that $\eta(\operatorname{T}(Y)) = \operatorname{T}(X)$
    where we regard $\operatorname{T}(X)$
    as a sublattice of $\Lambda$.
    Moreover, the isometry $\Phi' := \eta^{-1}\Phi\eta$ lies in $\operatorname{Mon}^2(Y)$, by item (b) above (recall that the monodromy group of OG10-type manifolds is maximal; see Table~\ref{tab:monodromy hk}). Note that the isometry $\Phi'$ preserves the Hodge structure on $H^2(Y, \mathbf{Z})$ because $H^{2,0}(X)$ is an eigenspace of $\Phi$ for the eigenvalue $\zeta$. The isometry $\Phi'$ also acts trivially on $\operatorname{NS}(Y)$ by construction. Therefore, Theorem~\ref{thm:Torelli NS trivial} implies that $\Phi'$ is realized by some algebraically trivial biholomorphism of $Y$ of dynamical degree $\lambda$.

    The same proof as before applies verbatim for the $\mathrm{K3}^{[n]}$-types ($n\geq 2$), using that for every $n\geq 2$, the $\mathbf{Z}$-lattice $(H^2(X,\mathbf{Z}), \smile)\oplus A_1(n-1)$ is isometric to the BBF-form of a $\mathrm{K3}^{[n]}$-type manifold, and items (a)--(d) above ensure the monodromy conditions (cf. Remark~\ref{rem:very special monodromy}).
        \qedhere
\end{proof}

\medskip

\begin{theo}
\label{thm_from_tori}
    Let $S$ be a Salem polynomial whose degree satisfies $4\leq \deg(S)\leq 6$ and let $\lambda$ be the associated Salem number. Suppose that there exists a complex 2-torus $X$ having an algebraically trivial biholomorphism $f$ of dynamical degree $\lambda$. Then in each of the known deformation families of hyperk\"ahler manifolds, there exists an elliptic representative having an algebraically trivial biholomorphism of dynamical degree $\lambda$. 
\end{theo}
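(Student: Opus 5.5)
We will mimic the proof of Theorem~\ref{thm:k3 to hidger dim}. For a complex 2-torus $X$, the lattice $(H^2(X,\mathbf{Z}),\smile)$ is even unimodular of signature $(3,3)$, hence isometric to $U^{\oplus3}$. Looking at Table~\ref{tab:info hk}, each known BBF lattice has the form
\[
\Lambda \cong U^{\oplus3}\oplus R,
\]
where $R$ is one of $A_1(n+1)$ ($\mathrm{Kum}_n$), $A_1^{\oplus2}$ (OG6), $U^{\oplus3}\oplus E_8^{\oplus2}$ (K3), $E_8^{\oplus2}\oplus A_1(n-1)$ ($\mathrm{K3}^{[n]}$), or $E_8^{\oplus2}\oplus A_2$ (OG10). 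In each case $R$ is negative definite except in the K3 case. There we take $U^{\oplus3}\oplus E_8^{\oplus 2}$ as the complement.

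Let $\varphi\in O^+(H^2(X,\mathbf{Z}))$ be induced by $f$. Then $\varphi|_{\operatorname{T}(X)}$ has characteristic polynomial $S$, $\ker(\varphi-\id)=\operatorname{NS}(X)$, and $H^{2,0}(X)$ is a $\zeta$-eigenspace for a non-real root $\zeta$ of $S$. Set $\Phi:=\varphi\oplus\id_R\in O(\Lambda)$. As in the proof of Proposition~\ref{thm:effectivity salem isom} and Theorem~\ref{thm:k3 to hidger dim}, we check the following:
\begin{enumerate}
\item[(a)] $\det\Phi=\det\varphi=+1$;
\item[(b)] $\Phi\in O^+(\Lambda)$, because the real spinor norm is multiplicative on orthogonal sums and $\varphi$ is orientation-preserving of determinant $+1$ (alternatively, $\varphi$ preserves a positive $3$-space in $U^{\oplus3}\otimes\mathbf{R}$ with orientation, and $\Phi$ fixes the complement);
\item[(c)] $\Phi$ acts trivially on $A(\Lambda)\cong A(R)$, since $U^{\oplus3}$ is unimodular and $\Phi|_R=\id$;
\item[(d)] $\ker(\Phi-\id)=\operatorname{NS}(X)\oplus R$.
\end{enumerate}
By Remark~\ref{rem:very special monodromy}, items (a)--(c) imply $\Phi$ lies in the monodromy group for every known type. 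This matters for $\mathrm{Kum}_n$, where the condition $\det(\varphi)A(\varphi)=\id$ holds by (a) and (c).

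Next, apply Proposition~\ref{thm:surj period map} to the period $\mathbf{C}\omega=H^{2,0}(X)\subset\Lambda\otimes\mathbf{C}$. This gives a marked pair $(Y,\eta)$ of the chosen deformation type. Since $\operatorname{T}(X)\otimes\mathbf{Q}$ is an irreducible $\varphi$-module containing $\omega$, we get $\eta(\operatorname{NS}(Y))=\omega^\perp\cap\Lambda=\operatorname{NS}(X)\oplus R$. This lattice is negative definite, because $\operatorname{NS}(X)$ is negative definite ($X$ is elliptic, as it carries an automorphism with Salem dynamical degree acting trivially on $\operatorname{NS}$) and $R$ is negative definite. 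Hence $Y$ is elliptic. In the K3 case, the complement $U^{\oplus3}\oplus E_8^{\oplus2}$ is indefinite, so this step fails and we must argue differently: either pass first through the K3 case (that is, start from a K3 surface $X'$ with $\operatorname{T}(X')\cong \operatorname{T}(X)$ via $U^{\oplus3}\hookrightarrow\Lambda_{K3}$), or choose a different primitive embedding of $\operatorname{T}(X)$ into $\Lambda_{K3}$ whose complement is negative definite.

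Once $Y$ is constructed, $\Phi':=\eta^{-1}\Phi\eta$ preserves the Hodge structure (it acts by $\zeta$ on $H^{2,0}(Y)$) and is the identity on $\operatorname{NS}(Y)$. Theorem~\ref{thm:Torelli NS trivial}, using $4\le\deg S\le 6\le b_2(Y)$, then produces an algebraically trivial biholomorphism of dynamical degree $\lambda$.

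The main obstacle is the K3 case, where the orthogonal complement is indefinite. We handle it by embedding $\operatorname{T}(X)$, of signature $(3,\deg S-3)$ with $\deg S\le 6$, primitively into $\Lambda_{K3}$ with negative definite complement $N$. Such an embedding should exist by Nikulin's embedding theorems, as the rank is at most $6$ and $22\ge 2\cdot 6+\ldots$ leaves ample room. We then extend $\varphi|_{\operatorname{T}(X)}$ by the identity on $N$. This extension is legitimate because $\varphi$ acts trivially on $A(\operatorname{T}(X))$: indeed it is trivial on $\operatorname{NS}(X)$, and the gluing is inside the unimodular lattice $U^{\oplus3}$. The surjectivity of the K3 period map and Theorem~\ref{thm:Torelli NS trivial} then finish the K3 case. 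The same approach would also give the $\mathrm{K3}^{[n]}$ and OG10 cases via Theorem~\ref{thm:k3 to hidger dim}.
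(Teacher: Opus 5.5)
Your argument is the same as the paper's: embed $U^{\oplus3}\cong H^2(X,\mathbf{Z})$ into each BBF lattice, extend $f^\ast$ by the identity, check items (a)--(d) against Table~\ref{tab:monodromy hk} and Remark~\ref{rem:very special monodromy}, then conclude with surjectivity of the period map and Theorem~\ref{thm:Torelli NS trivial}, and it is correct. The one slip is the K3 ``obstacle'': the K3 lattice is $U^{\oplus3}\oplus E_8^{\oplus2}$, so the complement is $R=E_8^{\oplus2}$, which is negative definite, and more generally any primitive embedding of $\operatorname{T}(X)$ (signature $(3,\deg S-3)$) into a lattice of signature $(3,\ast)$ has negative definite complement, so your uniform argument already covers K3 without any appeal to Nikulin's theorems.
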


\begin{proof}
    Similar to the proof of Theorem~\ref{thm:k3 to hidger dim}, using that for a complex 2-torus $X$, the $\mathbf{Z}$-lattice $(H^2(X, \mathbf{Z}), \smile)$ is isometric to $U^{\oplus 3}$. The only point one has to be careful with is the monodromy condition. We refer to Table~\ref{tab:monodromy hk} for the description of the monodromy groups for the known hyperk\"ahler manifolds. We have highlighted items (a)--(d) in the proof of Theorem~\ref{thm:k3 to hidger dim} for this purpose (cf. Remark~\ref{rem:very special monodromy}). Further details are left to the reader.
        \qedhere
\end{proof}

\section*{Acknowledgments}

We thank Dominique Mattei for assisting us in the computation of the Salem polynomial in Example~\ref{ex:og10} and Simon Brandhorst for verifying the explicit realization in Example
~\ref{ex:og6}.

\section*{Tool and computational resource disclosure}
The Salem polynomials from Example~\ref{ex:og10}, and from Examples~\ref{ex:og6} and \ref{ex:og6-2} have been computed with the help of Claude AI and Gemini AI, respectively. The properties of the polynomials and the explicit realizations were subsequently verified by the authors, together with the computer algebra system OSCAR \cite{oscarbook,oscar}. 
ChatGPT was used for proofreading. Aside the previous statements, the authors declare that all the results of the paper have been obtained without the use of any AI.

\section*{Conflict of interest}
The authors declare no conflict of interest.

\end{document}